\documentclass[12pt]{amsart}
\usepackage{amsmath,latexsym,amsfonts,amssymb,mathrsfs,ulem}
\usepackage{graphicx,verbatim}
\usepackage{a4wide}
\usepackage{mathtools} 
\usepackage{stackengine} 
\usepackage[colorlinks,linkcolor={blue},
citecolor={blue},urlcolor={red},]{hyperref}
\usepackage{hyperref}
\usepackage{color}
\allowdisplaybreaks

\theoremstyle{plain}
\newtheorem{theorem}{Theorem}[section]
\theoremstyle{remark}
\newtheorem{remark}[theorem]{Remark}

\theoremstyle{plain}
\newtheorem{corollary}[theorem]{Corollary}
\newtheorem{lemma}[theorem]{Lemma}
\newtheorem{proposition}[theorem]{Proposition}
\newtheorem{definition}[theorem]{Definition}

\newtheorem{assumption}{Assumption}

\numberwithin{equation}{section}

\newcommand{\rA}{\mathrm{ A}}
\newcommand{\rH}{\mathrm{ H}}

\newcommand{\rV}{\mathrm{ V}}

\newcommand{\rU}{\mathrm{ U}}
\newcommand{\hH}{\mathbb{H}}
\newcommand{\lL}{\mathbb{L}}
\newcommand{\LS}{{\lL^2(Q)}}
\newcommand{\LQeps}{{\lL^2(Q_\eps)}}
\newcommand{\bbS}{Q}
\newcommand{\nablaS}{\nabla'}
\newcommand{\DDelta}{\Delta'}
\newcommand{\ddivS}{\ddiv'}

\newcommand{\ddiv}{{\rm div\,}}

\newcommand{\tu}{\widetilde{u}}
\newcommand{\rv}{\mathrm{v}}
\newcommand{\rw}{\mathrm{w}}

\newcommand{\hu}{\what{u}}

\newcommand{\tge}{\widetilde{g}_\eps}
\newcommand{\tfe}{\widetilde{f}_\eps}

\def\N{\mathbb{N}}

\newcommand{\R}{\mathbb{R}}
\newcommand{\Rp}{\mathbb{R}_+}

\newcommand{\tale}{\wtd{\alpha}_\eps}
\newcommand{\tble}{\wtd{\beta}_\eps}
\newcommand{\ale}{{\alpha}_\eps}

\newcommand{\hae}{\what{\alpha}_\eps}

\newcommand{\hbe}{\what{\beta}_\eps}

\newcommand{\hp}{\what{\mathbb{P}}}
\newcommand{\homega}{\what{\Omega}}
\newcommand{\hE}{\what{\E}}
\newcommand{\bbF}{\mathbb{F}}
\newcommand{\bbP}{\mathbb{P}}
\newcommand{\E}{\mathbb{E}}
\newcommand{\bbE}{\mathbb{E}}
\newcommand{\tWe}{\wtd{W}_\eps}

\newcommand{\be}{\begin{equation}}
\newcommand{\ee}{\end{equation}}
\newcommand{\ba}{\begin{array}}
\newcommand{\ea}{\end{array}}
\newcommand{\beas}{\begin{eqnarray*}}
\newcommand{\eeas}{\end{eqnarray*}}
\newcommand{\bea}{\begin{eqnarray}}
\newcommand{\eea}{\end{eqnarray}}
\newcommand{\nn}{\nonumber}
\newcommand{\lb}{\label}
%

\newcommand{\calA}{\mathcal{A}}
\newcommand{\calB}{\mathcal{B}}

\newcommand{\calF}{\mathcal{F}}
\newcommand{\calK}{\mathcal{K}}

\newcommand{\calT}{\mathcal{T}}
\newcommand{\calV}{\mathcal{V}}

\newcommand{\calZ}{\mathcal{Z}}

\newcommand{\test}{{ R_\eps\varphi}}
\newcommand{\tMe}{\widetilde{M}_\eps}

\newcommand{\oMe}{\ocirc{M}_\eps}
\newcommand{\tNe}{\widetilde{N}_\eps}
\newcommand{\tue}{\widetilde{u}_\eps}

\newcommand{\eps}{\varepsilon}
\newcommand{\wtd}{\widetilde}
\newcommand{\what}{\widehat}
\newcommand{\bx}{{\bf x}}
\newcommand{\by}{{\bf y}}
\newcommand{\inteps}{\int_0^\eps}
\newcommand\ocirc[1]{\ensurestackMath{\stackon[1pt]{#1}{\mkern2mu\circ}}}
%


\definecolor{darkblue}{rgb}{0.1,0.1,0.9}

\newcommand\delc[1]{}
\newcommand\dela[1]{}

\title{Stochastic Navier--Stokes equations on a 3D thin domain}
\author{Z.~Brze\'{z}niak}
\address{Department of Mathematics, University of York, Heslington, York, YO10 5DD, UK}
\email{zdzislaw.brzezniak@york.ac.uk}
\author{G.~Dhariwal}
\address{Institute of Analysis and Scientific Computing, Vienna University of Technology, Vienna, Austria}
\email{gaurav.dhariwal@tuwien.ac.at}
\author{Q.~T.~Le Gia}
\address{School of Mathematics and Statistics, University of New South Wales, Sydney, NSW 2052, Australia}
\email{qlegia@unsw.edu.au}
\thanks{The research of all three authors is partially supported by Australian Research Council Discover Project grant DP180100506. Zdzis\l aw Brze{\'z}niak has been supported by the Leverhulme project grant ref no RPG-2012-514 and by Australian Research Council Discover Project grant DP160101755. The research of Gaurav Dhariwal was supported by Department of Mathematics, University of York and is partially supported by the Austrian Science Fund (FWF) grants P30000, W1245, and F65.
}
\subjclass{Primary 60H15; Secondary 35R60, 35Q30, 76D05}
\keywords{Stochastic Navier--Stokes equations, Navier--Stokes equations in thin domains, singular limit}
\dedicatory{This article is dedicated to the memory of Igor D. Chueshov, a pioneer in the fields of non-linear PDE, dissipative systems, monotone stochastic dynamical systems, to state a few.}
\begin{document}
\date{\today}

\begin{abstract} 
    Stochastic Navier--Stokes equations in a thin three-dimensional domain are considered, driven by additive noise. The convergence of martingale solution of the stochastic Navier--Stokes equations in a thin three-dimensional domain to the unique martingale solution of the 2D stochastic Navier--Stokes equations, as the thickness of the film vanishes, is established. Hence, we justify the approximation of 3D Navier--Stokes equations driven by random forcing by its corresponding two-dimensional setting in applications.
\end{abstract}
\maketitle

\section{Introduction}

For various motivations, partial differential equations in thin domains has been studied extensively in last few decades;
e.g. Babin and Vishik \cite{[BV83]}, Ciarlet \cite{[Ciarlet90]}, Ghidaglia and Temam \cite{[GT91]}, Marsden \textit{et.al.} \cite{[MRR95]}, Kaizu and Saito \cite{[KS07]} and references there in. The study of the Navier--Stokes equations (NSE) on thin domains originates in a series of papers by Hale and Raugel \cite{[HR92a]}--\cite{[HR92c]} concerning the reaction-diffusion and damped wave equations on thin domains. Raugel and Sell \cite{[RS93],[RS94]} proved the global existence of strong solutions to NSE on thin domains for large initial data and forcing terms, in the case of purely periodic and periodic-Dirichlet boundary conditions. Later,
by applying a contraction principle argument and carefully analysing the dependence of the solution on the first eigenvalue of the corresponding Laplace operator, Arvin \cite{[Arvin96]} showed global existence of strong solutions of the Navier--Stokes equations on thin three-dimensional domains for large data. Temam and Ziane \cite{[TZ96]} generalised the results of \cite{[RS93], [RS94]} to other boundary conditions. Moise \textit{et.al.} \cite{[MTZ97]} proved global existence of strong solutions for initial data larger than in \cite{[RS94]}. Iftimie \cite{[Iftime99]} showed the existence and uniqueness of solutions for less regular initial data which was further improved by Iftimie and Raugel \cite{[IR01]} by reducing the regularity and increasing the size of initial data and forcing.

More recently, stochastic 3D Navier--Stokes equations in a thin three dimensional
domain $\mathbb{O}_\eps = \mathbb{T}^2 \times (0,\eps)$,
where $\mathbb{T}^2$ is a two-dimensional
torus, was investigated by Chueshov and Kuksin in \cite{[CK08]} in which
the authors considered the 3D NSE in $\mathbb{O}_\eps$ perturbed by a random kick-force
and proved that if the force is not too big and is genuinely random then the equation has
a unique stationary measure.

The main objective of our paper is to establish the convergence of the martingale
solution $\tue$ (more specifically the averages in thin direction $\oMe \tue$, see \eqref{eqn-M_eps-0} for the definition of $\oMe$) of the stochastic Navier--Stokes equations (SNSE) described in a thin domain $Q_\eps$ \cite{[FG95],[BM13],[BS20]}, to the martingale solution of the 2D stochastic Navier--Stokes equations on a bounded domain $Q \subset \R^2$ \cite{[BM13]}, as thickness $\eps$ of the thin domain converges to zero, Theorem~\ref{thm:main_thm}. 

{After having finished this article, we came across the paper \cite{[CK08a]} by Chueshov and Kuksin, in  which the authors proved the existence and the uniqueness of a solution $u^\alpha_\eps$ to the so called Leray-$\alpha$ approximation; see \cite{[Leray34]}, of the stochastic 3D Navier--Stokes equations \eqref{eq:leray_NSE} and in Proposition~12 \cite{[CK08a]}, a result similar to ours is stated, see Remark \ref{rem-Kuksin} for more detailed discussion.}

We study the stochastic Navier--Stokes equations (SNSE) for incompressible fluid
\begin{align}
 d \tue - [ \nu \Delta \tue - (\tue \cdot \nabla) \tue - \nabla \widetilde{p}_\eps]dt = \tfe dt +  \widetilde{G}_\eps\, d\tWe(t) &\quad \text{ in } Q_\eps \times (0,T), \label{eqn-snse1}\\
\ddiv \tue = 0 &\quad \text{ in } Q_\eps \times (0,T),\label{eqn-snse2}
\end{align}
in thin domain
\begin{equation}\label{eqn-Q-eps}
Q_\eps := Q \times (0,\eps), \text{ where } 0< \eps < 1/2,
\end{equation}
and $Q \subset \R^2$ is a smooth bounded domain with smooth boundary $\partial Q$ along with boundary conditions
\begin{align}
\wtd{u}_{\eps,3} = 0, \quad \partial_3 \wtd{u}_{\eps,j}=0, \;\; j=1,2
&\quad\text{ on } \Gamma^\eps_h \times (0,T), \label{eqn-snse3} \\
\wtd{u}_\eps = 0 &\quad \text{ on  } \Gamma^\eps_{\ell} \times (0,T),\label{eqn-snse4} \\
\wtd{u}_\eps(\cdot,0) = \wtd{u}^\eps_0 &\quad\text{ in } Q_\eps. \label{eqn-snse5}
\end{align}
In the above, $\tue=(\wtd{u}_{\eps,1}, \wtd{u}_{\eps,2}, \wtd{u}_{\eps,3})$ is the fluid velocity field, $p$ is the pressure, $\nu>0$ is a (fixed) kinematic viscosity, $\tu_0^\eps$ is a divergence free
vector field on $Q_\eps$ and
\begin{equation}
\label{eqn-Gamma}
\Gamma^\eps_\ell = \partial Q \times (0,\eps), \quad
\Gamma^\eps_h = \overline{Q} \times \{0, \eps \},
\end{equation}
$\partial Q_\eps = \Gamma^\eps_\ell \cup \Gamma^\eps_h$.
We also put
\[
\Gamma^{\eps,+}_h = \overline{Q} \times \{\eps\}\; \mbox{ and }\;\Gamma^{\eps,-}_h = \overline{Q} \times \{0\}.
\]
$\tWe(t)$, $t \ge 0$ is an $\R^N$-valued Wiener process in some probability space $\left(\Omega, \mathcal{F}, \mathbb{F}, \mathbb{P}\right)$ to be defined precisely later.

The main result of this paper is Theorem~\ref{thm:main_thm} where we prove the convergence of the averages
in the vertical direction of the martingale solution (see Definition~\ref{defn5.2}) of the 3D stochastic equations
\eqref{eqn-snse1}--\eqref{eqn-snse5}, as the thickness $\eps \rightarrow 0$, to a martingale solution $u$ (see Definition~\ref{defn_mart_SNSE_sphere}) of the following 2D stochastic Navier--Stokes equations in $Q$:
\begin{align}
du - \left[\nu \Delta' u - (u \cdot \nabla') u
 -  \nabla'p\right]dt = fdt + G\,dW(t)
&\quad\text{ in } Q \times (0,T), \label{eqn-snse11}\\
\ddiv' u = 0
&\quad \text{ in } Q \times (0,T),\label{eqn-snse12} \\
u(x',0) = u_0
&\quad\text{ in } Q,\label{eqn-snse13}\\
u = 0
&\quad\text{ on } \partial Q \times (0,T) \label{eqn-snse14}
\end{align}
where $u=(u_1,u_2)$, $\Delta'$, $\nabla'$ and $\ddiv'$ are the Laplacian, gradient and divergence operators respectively defined for a $\R^2$-valued function. Assumptions on initial data and external forcing will be stated later.

An interesting observation here is that, for $\eps > 0$, existence of a martingale solution for the 3D SNSE \eqref{eqn-snse1}--\eqref{eqn-snse5} is known, see \cite{[FG95],[BM13]}, but the uniqueness of such a martingale solution is not known (as is the case for deterministic 3D NSE). Whereas,  for the 2D NSE in a bounded domain \eqref{eqn-snse11}--\eqref{eqn-snse14}, there exists a unique martingale solution, see \cite{[BP01], [CK08]} and references therein. Thus, irrespective of which martingale solution $\tue$ of \eqref{eqn-snse1}--\eqref{eqn-snse5} we choose in Theorem~\ref{thm:main_thm}, the limiting object $\hu$ will be the same, the unique martingale solution to \eqref{eqn-snse11}--\eqref{eqn-snse14}.

The article is organised as follows. We introduce necessary functional spaces, Stokes operator and bilinear map $B$ corresponding to the nonlinearity in \S\,\ref{section-prelim}. In \S\,\ref{sec:Average}, we define some averaging operators which are essential tools required in the further analysis and give their properties. Stochastic Navier--Stokes equations on a thin domain $Q_\eps$ are introduced in \S\,\ref{sec:snse} and a priori estimates for the vertically averaged velocity are obtained which are later used to prove the convergence of the average of a martingale solution $\tue$, of stochastic NSE on a thin domain (see \eqref{eq:5.1}--\eqref{eq:5.4}), to a martingale solution of the 2D stochastic NSE on $Q$ (see \eqref{eq:5.6}--\eqref{eq:5.9}) with vanishing thickness, see Theorem~\ref{thm:main_thm}.

\textbf{Acknowledgement.} The authors would like to thank Sergei Kuksin for discussion related to his paper \cite{[CK08]}.

\section{Preliminaries}\label{section-prelim}

A point $\bx \in Q_\eps$ will be represented in the Cartesian coordinates $\bx = (x_1,x_2,x_3)$ or by $\bx = (\bx', x_3)$ where $\bx' = (x_1,x_2) \in Q$ and $x_3 \in (0, \eps)$.

\subsection{Function spaces}\label{subsection-spaces}
For $p \in [1, \infty)$, by $L^p(Q_\eps)$, we denote the Banach space of (equivalence-classes of) Lebesgue measurable $\R$-valued $p$th power integrable functions on $Q_\eps$. The $\R^3$-valued $p$th power integrable vector fields will be denoted by $\mathbb{L}^p(Q_\eps)$. The norm in $\mathbb{L}^p(Q_\eps)$ is given by
\[\|u\|_{\mathbb{L}^p(Q_\eps)} := \left(\int_{Q_\eps}|u(\by)|^p\,d\by\right)^{1/p}, \qquad u \in \mathbb{L}^p(Q_\eps).\]
If $p=2$, then $\mathbb{L}^2(Q_\eps)$ is a Hilbert space with the inner product given by
\[
\left(u, \rv \right)_{\LQeps} := \int_{Q_\eps} u(\by) \cdot \rv(\by)\,d\by, \qquad u, \rv \in \LQeps.
\]
The Sobolev space $\mathbb{W}^{k,p}(Q_\eps; \R^3)$ ($k$ is integer, $1 \le p \le \infty$) is the space of $\R^3$-valued functions in $\lL^p(Q_\eps)$ with weak derivatives of ordr less than or equal to $k$ in $\lL^p(Q_\eps)$. This is a Banach space with the norm
\[
\|u\|_{\mathbb{W}^{k,p}(Q_\eps)} := \left(\sum_{[j]\le m} \|D^j u\|^p_{L^p(Q_\eps)}\right)^{1/p}.
\]
When $p = 2$, $\hH^k(Q_\eps)$ is a Hilbert space with the scalar product
\[
(u, \rv)_{\hH^{k}(Q_\eps)} = \sum_{[j] \le k} (D^ju, D^j \rv), \qquad u, \rv \in \hH^k(Q_\eps).
\]
In particular for $k = 1$, $\mathbb{H}^1(Q_\eps) = \mathbb{W}^{1,2}(Q_\eps)$ is a Hilbert space with the inner product given by
\[ \left( u, \rv \right)_{\mathbb{H}^1(Q_\eps)} := \left(u, \rv \right)_\LQeps + \left(\nabla u, \nabla \rv \right)_\LQeps, \qquad u, \rv \in \mathbb{H}^1(Q_\eps),\]
where
\[\left(\nabla u, \nabla \rv \right)_\LQeps = \sum_{i=1}^3\int_{Q_\eps} D_i u(\by) \cdot D_i \rv(\by)\,d\by.\]
The Lebesgue and Sobolev spaces on $\bbS$ for $R^2$-valued vector fields will be denoted by $\mathbb{L}^p(\bbS)$ and $\mathbb{W}^{m,q}(\bbS)$ respectively for $1 \le p, q \le \infty$ and $m \in \mathbb{Z}$. In particular, we will write $\mathbb{H}^1(\bbS)$ for $\mathbb{W}^{1,2}(\bbS)$.

Next, we recall the ususal spaces appearing in the analysis of NSE:
\[\calV = \{ \varphi \in C^\infty(Q; \R^2) : \ddiv' \varphi = 0  \},\]
\begin{equation}\label{eqn-H}
 \rH = \text{closure of } \calV \text{ in } \lL^2(Q),
\end{equation}
and
\begin{equation}\label{eqn-V}
 \rV = \mbox{closure of } \calV \mbox{ in } \hH^{1}_0,
\end{equation}
where $\mathbb{W}^{k,p}_0(Q)$ ($\hH^k_0(Q)$ when $p=2$) is the closure of
$C_0^\infty(Q; \R^2)$\footnote{the space of $C^\infty(Q;\R^2)$ functions with compact support in $Q$} in $\mathbb{W}^{k,p}(Q)$. The functional spaces $\rH$ and $\rV$ can be further characterised \cite{[Temam00]} as
\begin{equation}\label{eqn-H2}
 \rH =  \{ u\in \lL^2(Q; \R^2) : \ddiv' u = 0 \mbox{ in } Q \mbox{ and } u \cdot \vec{n} = 0 \mbox{ on } \partial Q  \},
\end{equation}
\begin{equation}\label{eqn-V2}
 \rV =  \{ u\in \hH^{1,2}(Q; \R^2): \ddiv' u = 0 \mbox{ in } Q \mbox{ and } u = 0 \mbox{ on } \partial Q  \},
\end{equation}
where $\vec{n}$ is the external unit normal vector field to $\partial Q$. On $\rH$, we consider the inner product and the norm inherited from $\lL^2(Q)$ and denote then by $(\cdot, \cdot)_\rH$ and $\|\cdot\|_\rH$ respectively, i.e.
\[
\left(u, \rv\right)_\rH := \left(u, \rv\right)_\LS, \qquad \qquad \|u\|_\rH := \|u\|_\LS, \quad u, \rv \in \rH.
\]
The norm on $\rV$ is equivalent to the norm:
\[\|u\| = \sum_{i=1}^2 |D_iu|^2, \qquad u \in \rV \]
and will be denoted by $\|\cdot\|_\rV$.

Similarly, see \cite{[TZ96]}, we define the spaces $\rH_\eps$ and $\rV_\eps$ for the vector fields defined on $Q_\eps$, by
\begin{equation}\label{eqn-H_eps}
 \rH_\eps =  \{ u\in \lL^2(Q_\eps;\R^3) : \ddiv u = 0 \mbox{ in } Q_\eps \mbox{ and } u \cdot \vec{n}_\eps = 0 \mbox{ on } \partial Q_\eps  \},
\end{equation}
\begin{equation}\label{eqn-V_eps}
  \rV_\eps=  \{ u\in \hH^1(Q_\eps;\R^3)\cap \rH_\eps : u  = 0 \mbox{ on } \Gamma_l^\eps\},
\end{equation}
where $\vec{n}_\eps$ is the external unit normal vector field to $\partial Q_\eps$,
e.g. $\vec{n}_\eps(\bx',x_3) = (\vec{n}(\bx'),0)$ for $(\bx',y) \in \Gamma^\eps_\ell$.
Note also that $\vec{n}_\eps = (0,0,\pm 1)$ on $\Gamma^{\eps,\pm}_h$.

If $u \in \rH_\eps$ then $u_3 = 0$ on $\Gamma_h^\eps$.
Thus, $\rH_\eps$ can be further characterised as
\begin{equation}\label{eqn-H-u_3}
 \rH_\eps =  \{ u\in \lL^2(Q_\eps;\R^3) : \ddiv u = 0 \mbox{ and } u \cdot \vec{n}_\eps = 0 \mbox{ on }  \Gamma_l^\eps \mbox{ and } u_3=0 \mbox{ on }\Gamma_h^\eps   \}.
\end{equation}

As in the case of spaces $\rH$ and $\rV$, the norms and inner products on the functional spaces $\rH_\eps$, $\rV_\eps$ are inherited from $\LQeps$ and $\hH^1_0(Q_\eps)$ and will be denoted by $\|\cdot\|_{\rH_\eps}$, $(\cdot, \cdot)_{\rH_\eps}$ and $\|\cdot\|_{\rV_\eps}$, $(\cdot, \cdot)_{\rV_\eps}$ respectively.

For later use, let us introduce the following function space:
\begin{equation}
    \label{space-U}
\rU := \text{closure of } \calV \text{ in } \hH^2(Q).
\end{equation}

\subsection{The Stokes operator}\label{subsection-Stokes}
The Stokes operator $\rA: D(\rA) \rightarrow \rH$ for $D(\rA) = \rH \cap H_0^2(Q,\R^2)$ is given by
\begin{equation}
\label{eqn-A}
  \rA u = - \pi (\Delta^\prime u),
\end{equation}
where $\Delta^\prime = \partial^2_1 + \partial^2_2$ is the Laplacian on $\R^2$ and $\pi$ is the Leray-Helmholtz projection operator from $L^2(Q; \R^2)$ onto $\rH$. The operator $\rA$ is associated
with a coercive bilinear form $a:\rV \times \rV \rightarrow \R$,
\[
  a(u,\rv) = \int_Q \nabla' u \cdot \nabla' \rv\, dx'
\]
and a linear isomorphism $\calA:\rV \rightarrow \rV^\ast$ satisfying
\[
 {}_{\rV^\ast}\langle\calA u , \rv\rangle_\rV = a(u,\rv), \quad u, \rv \in \rV,
\]
where $\rV^\ast$ is the topological dual of $\rV$.
It is known, see e.g. \cite{[LM72]} that
\begin{equation}
\label{eqn-A2}
D(\rA) = \{ u \in \rV: \calA u \in \rH\} \text{ and }
\rA u = \calA u, \quad u \in D(\rA),
\end{equation}
and $\rA$ is a self-adjoint, non-negative operator with a compact inverse. Moreover, $D(\rA)$ can be further characterised:
\begin{equation}
    \label{eqn-A3}
D(\rA) = \{ u \in \hH^{2,2}(Q) \colon  \ddiv^\prime u = 0 \mbox{ in } Q,\,
   u\cdot\vec{n} = 0 \text{ and } u = 0 \text{ on } \partial Q\}.
\end{equation}

Analogously, we consider a coercive symmetric bilinear form
$a_{\eps}: \rV_{\eps} \times \rV_{\eps} \rightarrow \R$
\[
a_\eps(u,\rv) = \int_{Q_\eps} \nabla u \cdot \nabla \rv\,dx, \quad u,\rv \in \rV_\eps
\]
and a linear isomorphism $\calA_\eps: \rV_\eps \rightarrow \rV^\ast_\eps$ satisfying
\[
 {}_{\rV^\ast_\eps}\langle\calA_\eps u, \rv\rangle_{\rV_\eps} = a_\eps(u, \rv), \quad u, \rv \in \rV_\eps,
\]
$\rV^\ast_\eps$ is the topological dual of $\rV_\eps$.
Then we define
\begin{equation}
\label{eqn-Aeps}
D(\rA_\eps) = \{ u \in \rV_\eps: \calA u \in \rH_\eps \} \text{ and }
\rA_\eps u = \calA_\eps u, \quad u \in D(\rA_\eps).
\end{equation}

It is known that $D(\rA_\eps)$ can be also characterised \cite{[TZ96]} as follows
\begin{equation}
\label{eqn-Aeps2}
D(\rA_\eps) = \{ u \in \hH^{2,2}(Q_\eps) \colon  \ddiv u = 0 \mbox{ in } Q_\eps,\,
   u = 0 \text{ on } \Gamma_\ell^\eps ,\, u_3, \partial_3 u_j = 0 \text{ on } \Gamma^\eps_h, \, j=1,2 \}.
\end{equation}
Moreover, by \cite{[Triebel78]}
\begin{equation}\label{eqn-fractional domains}
\begin{split}
\rV=[\rH, D(\rA)]_{\frac12} &\;\mbox{ and }\;  \rV_\eps = [\rH_\eps, D(\rA_\eps)]_{\frac12} \\
\rV = D(\rA^{1/2}) &\;\mbox{ and }\;  \rV_\eps = D(\rA^{1/2}_\eps).
\end{split}
\end{equation}
Note that for $\rH_\eps \ni u_\eps = (u^\prime, u_{\eps,3})$, we have the condition $ u_\eps \cdot \vec{n}_\eps  = u' \cdot \vec{n} = 0$
on $\Gamma^\eps_\ell$ which is weaker than the condition $u_\eps = 0$ on $\Gamma_\ell^\eps$ for $u_\eps \in \rV_\eps$.

It can be proved that $A_\eps$ is self-adjoint in $\rH_\eps$ and from \eqref{eqn-Aeps} and the definition of the bilinear form $a_\eps$ we conclude
\begin{equation}\label{def:Aeps}
 {}_{\rV^\ast_\eps}\langle A_\eps u, \rv\rangle_{\rV_\eps} = \int_{Q_\eps} (\nabla u, \nabla \rv)_{\lL^2(Q_\eps)}\,dx,
\quad u, \rv \in \rV_\eps.
\end{equation}


\subsection{The nonlinear term}
\label{sec:nonlinear}
Let $b_\eps$ be the continuous trilinear from on $\rV_\eps$ defined by$\colon$
\begin{equation}
\label{eqn-trilin-eps}
b_\eps(u, \rv, \rw) = \int_{Q_\eps}\left(u \cdot \nabla \right) \rv \cdot \rw \,d\bx,\qquad u,\rv,\rw \in \rV_\eps.
\end{equation}
We denote by $B_\eps$ the bilinear mapping from $\rV_\eps\times\rV_\eps$ to $\rV_\eps^\ast$ by
\begin{equation}
\label{eqn-nonlin-eps}
{}_{\rV^\ast_\eps}\left\langle B_\eps(u,\rv), \rw \right\rangle_{\rV_\eps} = b_\eps(u,\rv,\rw), \qquad u,\rv, \rw \in \rV_\eps,
\end{equation}
and we set
\[
B_\eps(u) = B_\eps(u,u).
\]
Let us also recall the following properties of the form $b_\eps$, which directly follows from the definition of $b_\eps$ $\colon$
\begin{equation}
\label{eqn-tri-eps-prop1}
b_\eps(u,\rv,\rw) = - b_\eps(u,\rw,\rv),\qquad u,\rv, \rw \in \rV_\eps.
\end{equation}
In particular,
\begin{equation}
\label{eqn-tri-eps-prop2}
{}_{\rV^\ast_\eps}\left\langle B_\eps(u,\rv), \rv \right\rangle_{\rV_\eps} = b_\eps(u, \rv, \rv) = 0, \qquad u,\rv \in \rV_\eps.
\end{equation}

Analogously, we can define trilinear form $b$ on $\rV$ and bilnear map $B \colon \rV \times \rV \to \rV^\ast$ corresponding to the nonlinearity appearing in 2D NSE, i.e.
\begin{align}
\label{eqn-trilin}
b(u, \rv, \rw) = \int_{Q}\left(u \cdot \nabla \right) \rv \cdot \rw \,d\bx,\qquad u,\rv,\rw \in \rV, \\
\label{eqn-nonlin}
{}_{\rV^\ast}\left\langle B(u,\rv), \rw \right\rangle_{\rV} = b(u,\rv,\rw), \qquad u,\rv, \rw \in \rV.
\end{align}
The properties analogous to \eqref{eqn-tri-eps-prop1} and \eqref{eqn-tri-eps-prop2} hold for maps $b$ and $B$ too.

\section{Averaging operators and their properties}\label{sec:Average}
\subsection{Some definitions}
In this section we recall the averaging operators which were first introduced by Raugel and Sell \cite{[RS93], [RS94]} to study the well-posedness of Navier--Stokes equations on thin 3D domains and existence of global attractors for such a system. Later, Temam and Ziane \cite{[TZ96]} used these operators to establish the global existence of strong solutions for Navier--Stokes equations in thin 3D domains with various boundary conditions when initial data belongs to ``large sets'' (see \cite{[TZ96]} for precise definition). They also used these operators to show that the average of the above obtained strong solution in the thin direction converges to the strong solution of a 2D Navier--Stokes system of equation as thickness $\eps$ goes to zero.

In this section we will recall these averaging operators according to our requirement and provide proofs of some of the results for the ease of the reader.

Let $M_\eps : C(Q_\eps; \R^3) \to C(Q; \R^2)$ be the averaging map; a map that averages the functions defined on thin domains $Q_\eps$ in the vertical direction and projects to functions defined on $Q$, and is defined by
\begin{equation}
\label{eqn-M_eps}
(M_\eps \psi)(\bx') := \frac{1}{\eps} \int_0^\eps \psi(\bx',x_3) dx_3, \qquad \bx' \in Q.
\end{equation}

\begin{lemma}
\lb{lemma2.2}
The map $M_\eps$ as defined in \eqref{eqn-M_eps} is continuous (and linear) with respect to norms $L^2(Q_\eps)$ and $L^2(\bbS)$. Moreover,
\begin{equation}
\lb{eq:2.2}
\|M_\eps \psi\|_{L^2(\bbS)}^2 \le \frac{1}{\eps}\|\psi\|^2_{L^2(Q_\eps)}, \qquad \psi \in L^2(Q_\eps).
\end{equation}
\end{lemma}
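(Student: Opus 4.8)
The plan is to derive the quantitative bound \eqref{eq:2.2} directly from the defining formula \eqref{eqn-M_eps} by a pointwise Cauchy--Schwarz inequality in the vertical variable $x_3$, and then to integrate over $Q$; linearity and continuity of $M_\eps$ will follow as immediate by-products. Linearity is clear, since for fixed $\bx'$ the assignment $\psi \mapsto \frac{1}{\eps}\inteps \psi(\bx',x_3)\,dx_3$ is linear.

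First I would fix $\bx' \in Q$ and bound the vertical average. Using the triangle inequality for the Euclidean norm under the integral sign and then the scalar Cauchy--Schwarz inequality applied to the pair $\bigl(1, |\psi(\bx',\cdot)|\bigr)$ on $(0,\eps)$,
\[
\left| \inteps \psi(\bx',x_3)\,dx_3 \right|^2
\le \left( \inteps |\psi(\bx',x_3)|\,dx_3 \right)^2
\le \left( \inteps 1\,dx_3 \right)\left( \inteps |\psi(\bx',x_3)|^2\,dx_3 \right)
= \eps \inteps |\psi(\bx',x_3)|^2\,dx_3.
\]
Dividing by $\eps^2$ and recalling \eqref{eqn-M_eps}, this yields the pointwise estimate
\[
|(M_\eps \psi)(\bx')|^2 \le \frac{1}{\eps} \inteps |\psi(\bx',x_3)|^2\,dx_3, \qquad \bx' \in Q.
\]

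Next I would integrate this inequality over $\bx' \in Q$ and apply Tonelli's theorem to recombine the resulting iterated integral into a single integral over $Q_\eps = Q \times (0,\eps)$, obtaining
\[
\|M_\eps \psi\|_{L^2(\bbS)}^2
= \int_Q |(M_\eps \psi)(\bx')|^2\,d\bx'
\le \frac{1}{\eps} \int_Q \inteps |\psi(\bx',x_3)|^2\,dx_3\,d\bx'
= \frac{1}{\eps}\|\psi\|^2_{L^2(Q_\eps)},
\]
which is exactly \eqref{eq:2.2}. This estimate shows that $M_\eps$ is a bounded linear operator with operator norm at most $\eps^{-1/2}$ on the dense subspace $C(Q_\eps;\R^3) \subset L^2(Q_\eps)$, hence it extends uniquely to a bounded, and therefore continuous, linear map on all of $L^2(Q_\eps)$ satisfying the same bound. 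There is no genuine obstacle in this argument; the only two points needing a word of care are that the Cauchy--Schwarz step is valid for almost every $\bx'$ with finite right-hand side (which Tonelli guarantees for $\psi \in L^2(Q_\eps)$), and that the extension-by-density step is what lets us pass from the continuous functions on which $M_\eps$ is initially defined to all of $L^2(Q_\eps)$.
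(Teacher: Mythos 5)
Your proof is correct and follows essentially the same route as the paper's: a pointwise Cauchy--Schwarz inequality in the vertical variable applied to the pair $(1,|\psi(\bx',\cdot)|)$, followed by integration over $Q$ and Fubini--Tonelli to recover the $L^2(Q_\eps)$ norm. The only cosmetic difference is that the paper defers the extension-by-density step to a separate corollary, whereas you fold it into the same argument.
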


\begin{proof}
Take $\psi \in C({Q}_\eps)$ then by definition of $M_\eps$ we have
\[M_\eps \psi(\bx') = \dfrac{1}{\eps} \int_0^{\eps} \psi(\bx', x_3)\,dx_3.\]
Thus, using the Cauchy-Schwarz inequality we have
\begin{align*}
\|M_\eps \psi\|_{L^2(\bbS)}^2 = \int_\bbS |M_\eps \psi(\bx')|^2\,d\bx' &= \int_\bbS \left|\dfrac{1}{\eps} \int_{0}^{\eps}\psi(\bx',x_3)\,dx_3\right|^2\,d\bx' \\
& \le \dfrac{1}{\eps^2} \int_\bbS\left( \inteps |\psi(\bx', x_3)|^2\,dx_3 \inteps dx_3 \right)\,d \bx' \\
& = \dfrac{1}{\eps^2} \cdot \eps \inteps \int_\bbS |\psi(\bx', x_3)|^2\,dx_3\,d\bx' \\
& = \frac{1}{\eps} \|\psi\|^2_{L^2(Q_\eps)}.
\end{align*}
Therefore, we obtain
\begin{equation}
\lb{eq:2.3}
\|M_\eps \psi\|^2_{L^2(\bbS)} \le \frac{1}{\eps}\|\psi\|^2_{L^2(Q_\eps)},
\end{equation}
showing the map $M_\eps$ bounded and we can infer \eqref{eq:2.2}.
\end{proof}

\begin{corollary}
\lb{cor2.3}
The map $M_\eps$ as defined in \eqref{eqn-M_eps} has a unique extension,
which without the abuse of notation will be denoted by the same symbol $M_\eps \colon L^2(Q_\eps) \to L^2(\bbS)$.
\end{corollary}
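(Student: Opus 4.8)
The plan is to invoke the standard bounded-linear-transformation extension principle. First I would recall that the space $C(Q_\eps)$ of continuous vector fields, on which $M_\eps$ is defined in \eqref{eqn-M_eps}, is dense in $L^2(Q_\eps)$; this is a classical fact, and since $Q_\eps$ is a bounded domain one may equivalently take the dense subspace to be $C(Q_\eps)$ itself or $C_0^\infty(Q_\eps)$, whose density in $L^2(Q_\eps)$ is standard. Lemma~\ref{lemma2.2} already establishes that on this dense subspace the map $M_\eps$ is linear and bounded, with operator norm controlled by $\eps^{-1/2}$ through the estimate \eqref{eq:2.2}.

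Next I would apply the bounded linear transformation theorem: a bounded linear operator $T$ from a dense linear subspace $D$ of a normed space $X$ into a Banach space $Y$ admits a unique continuous linear extension $\overline{T} \colon X \to Y$, and the extension preserves the operator norm. Here $X = L^2(Q_\eps)$, the target $Y = L^2(\bbS)$ is complete (being a Hilbert space), $D = C(Q_\eps)$, and $T = M_\eps$. Concretely, for $\psi \in L^2(Q_\eps)$ one selects a sequence $\psi_n \in C(Q_\eps)$ with $\psi_n \to \psi$ in $L^2(Q_\eps)$; then by linearity together with \eqref{eq:2.2} the sequence $(M_\eps \psi_n)_n$ is Cauchy in $L^2(\bbS)$, hence convergent, and one defines $M_\eps \psi$ as its limit. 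The same estimate shows the limit is independent of the approximating sequence, so the extension is well defined, and both linearity and the bound \eqref{eq:2.2} pass to the limit, now holding for every $\psi \in L^2(Q_\eps)$.

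There is no serious obstacle here; the only points deserving a word of care are that the limit does not depend on the chosen approximating sequence (which follows at once from linearity and \eqref{eq:2.2}) and that the extension is unique (which follows from continuity together with density, since any two continuous linear maps agreeing on the dense set $C(Q_\eps)$ must coincide on all of $L^2(Q_\eps)$). Finally I would note that, because the extended operator continues to satisfy \eqref{eq:2.2}, using the same symbol $M_\eps$ for it introduces no ambiguity, which justifies the notation adopted in the statement.
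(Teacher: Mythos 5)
Your proof is correct and follows essentially the same route as the paper: density of $C(Q_\eps)$ in $L^2(Q_\eps)$ plus the boundedness from Lemma~\ref{lemma2.2} and completeness of the target space yield the unique continuous extension. The only difference is cosmetic: you correctly attribute the extension to the bounded linear transformation (density extension) principle, whereas the paper's one-line proof invokes ``the Riesz representation theorem,'' which is a misnomer for the result actually being used; your Cauchy-sequence argument is the right justification.
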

\begin{proof}
Since $C(Q_\eps)$ is dense in $L^2(Q_\eps)$ and $M_\eps \colon L^2(Q_\eps) \to L^2(\bbS)$ is a bounded map thus by the Riesz representation theorem there exists a unique extension.
\end{proof}

\begin{remark}
\label{rem.M_eps_dual}
It is easy to check that the dual operator $M^*_\eps: L^2(Q) \rightarrow L^2(Q_\eps)$
is given by
\[
 (M^\ast_\eps \psi)(\bx',x_3) = \frac{1}{\eps} g(\bx'), \quad (\bx',x_3) \in Q_\eps.
\]
\end{remark}

Note that if $\psi$ is independent of $x_3$, i.e. $\psi(x',x_3) \equiv \xi(x')$ for all $x_3 \in (0,\eps)$
then $(M_\eps \psi)(\bx') = \xi(\bx')$ for $x' \in Q$.

We define another map $\what{M}_\eps$,
\begin{equation}\label{eqn-hatM_eps}
\begin{aligned}
& \what{M}_\eps : L^2(Q_\eps) \rightarrow L^2(Q_\eps) \\
& \what{M}_\eps \psi(\bx',y_3) = M_\eps \psi(\bx'), \qquad (\bx', y_3) \in Q_\eps,
\end{aligned}
\end{equation}
and for such a map we have the following scaling property:
\begin{lemma}\label{lem-2.2}
Let $\psi \in L^2(Q_\eps)$, then
\begin{equation}\label{eqn-scaling}
\|\what{M}_\eps \psi \|^2_{L^2(Q_\eps)} = \eps \|M_\eps \psi\|^2_{L^2(Q)}.
\end{equation}
\end{lemma}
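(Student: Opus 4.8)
The plan is to prove the identity \eqref{eqn-scaling} by a direct computation that exploits the fact that $\what{M}_\eps \psi$ is, by its very definition \eqref{eqn-hatM_eps}, independent of the vertical variable $y_3$. The key observation is that $\what{M}_\eps \psi(\bx',y_3) = M_\eps\psi(\bx')$ does not depend on $y_3 \in (0,\eps)$, so integrating its square over the thin domain $Q_\eps = Q \times (0,\eps)$ factorises into a product of an integral over $Q$ and the elementary integral $\int_0^\eps dy_3 = \eps$. This is where the scaling factor $\eps$ on the right-hand side comes from.

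Concretely, I would start from the definition of the $\lL^2(Q_\eps)$ norm and write
\[
\|\what{M}_\eps \psi\|^2_{L^2(Q_\eps)}
= \int_{Q_\eps} |\what{M}_\eps \psi(\bx',y_3)|^2 \, d\bx'\, dy_3
= \int_Q \int_0^\eps |M_\eps\psi(\bx')|^2 \, dy_3\, d\bx',
\]
using Fubini's theorem to separate the integration variables (justified since $\what{M}_\eps\psi \in L^2(Q_\eps)$ by Lemma~\ref{lemma2.2} and Corollary~\ref{cor2.3}, so the integrand is integrable). Since the integrand $|M_\eps\psi(\bx')|^2$ is constant in $y_3$, the inner integral simply yields a factor of $\eps$, giving
\[
\|\what{M}_\eps \psi\|^2_{L^2(Q_\eps)}
= \eps \int_Q |M_\eps\psi(\bx')|^2 \, d\bx'
= \eps \|M_\eps\psi\|^2_{L^2(Q)},
\]
which is exactly \eqref{eqn-scaling}.

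There is no genuine obstacle here; the statement is essentially a bookkeeping identity recording how the averaging operator interacts with the vertical rescaling. The only point that warrants a sentence of care is the domain of validity: one should first verify the identity for $\psi \in C(Q_\eps)$, where $M_\eps\psi$ is defined pointwise by \eqref{eqn-M_eps}, and then extend to all $\psi \in L^2(Q_\eps)$ by the density of $C(Q_\eps)$ in $L^2(Q_\eps)$ together with the continuity of both $M_\eps$ (Corollary~\ref{cor2.3}) and $\what{M}_\eps$. In practice, however, since $\what{M}_\eps$ is defined directly in terms of $M_\eps$ on all of $L^2(Q_\eps)$ via \eqref{eqn-hatM_eps}, the Fubini computation applies verbatim to any $\psi \in L^2(Q_\eps)$, so the density argument can be omitted and the proof is immediate.
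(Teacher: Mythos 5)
Your proof is correct and follows essentially the same route as the paper: apply the definition of $\what{M}_\eps$, use Fubini to split the integral over $Q_\eps = Q \times (0,\eps)$, and observe that the integrand is constant in $y_3$ so the vertical integral contributes the factor $\eps$. Your closing remark that the density argument is unnecessary is also consistent with the paper, which likewise performs the computation directly for general $\psi \in L^2(Q_\eps)$.
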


\begin{proof}
Let $\psi \in L^2(Q_\eps)$. Then by the definition of the map $\what{M}_\eps$ and the Fubini Theorem, we have
\begin{align*}
\|\what{M}_\eps \psi\|^2_{L^2(Q_\eps)}
& = \int_{Q_\eps} |\what{M}_\eps \psi (\bx)|^2 d\bx = \int_{Q}\inteps |\what{M}_\eps \psi (\bx', y_3)|^2 d\bx' dy_3 \\
& = \int_Q \int_0^\eps |M_\eps \psi(\bx')|^2 d\bx' dy_3 = \eps \| M_\eps \psi\|^2_{L^2(Q)}.
\end{align*}
\end{proof}

The orthogonal component $\what{N}_\eps$, of the planar projection map $\what{M}_\eps$ is defined by
\begin{equation}
    \label{eqn-Nhat_eps}
    \what{N}_\eps = \mathrm{Id} - \what{M}_\eps,
\end{equation}
i.e.
\[
\what{N}_\eps \colon L^2(Q_\eps) \ni \psi \mapsto \psi - \what{M}_\eps \psi \in L^2(Q_\eps),
\]
where $\mathrm{Id}$ is the identity on $L^2(Q_\eps)$.

\subsection{Properties of the maps}
In the following lemma we establish an important property of the map $\what{N}_\eps$.
\begin{lemma}
\label{lemma_avgN}
Let $\psi \in L^2(Q_\eps)$, then
\begin{equation}
    \label{eqn-Nhat_eps-prop1}
\int_0^\eps \what{N}_\eps \psi(\bx', x_3)\,dx_3 = 0, \quad \mbox{ for a.a. } \bx^\prime \in  Q .
\end{equation}
\end{lemma}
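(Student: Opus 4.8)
The plan is to compute the vertical integral of $\what{N}_\eps \psi$ directly from its definition and exploit the fact that the averaged part $\what{M}_\eps \psi$ is constant in the $x_3$-variable. First I would write $\what{N}_\eps \psi = \psi - \what{M}_\eps \psi$ using \eqref{eqn-Nhat_eps}, so that for almost every $\bx' \in Q$,
\[
\int_0^\eps \what{N}_\eps \psi(\bx', x_3)\,dx_3 = \int_0^\eps \psi(\bx', x_3)\,dx_3 - \int_0^\eps \what{M}_\eps \psi(\bx', x_3)\,dx_3.
\]
The key observation is that by the definition \eqref{eqn-hatM_eps} of $\what{M}_\eps$, the integrand $\what{M}_\eps \psi(\bx', x_3) = M_\eps \psi(\bx')$ does not depend on $x_3$, so the second integral equals $\eps \, M_\eps \psi(\bx')$.

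Next I would invoke the definition \eqref{eqn-M_eps} of the averaging operator $M_\eps$, which gives $M_\eps \psi(\bx') = \frac{1}{\eps}\int_0^\eps \psi(\bx', x_3)\,dx_3$, whence $\eps \, M_\eps \psi(\bx') = \int_0^\eps \psi(\bx', x_3)\,dx_3$. Substituting this into the expression above, the two terms cancel exactly and the integral vanishes. This yields \eqref{eqn-Nhat_eps-prop1}.

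One point requiring a little care is that $M_\eps$ was originally defined pointwise on $C(Q_\eps; \R^3)$ and then extended by density (Corollary~\ref{cor2.3}) to $L^2(Q_\eps)$, so for a general $\psi \in L^2(Q_\eps)$ the identity $M_\eps \psi(\bx') = \frac{1}{\eps}\int_0^\eps \psi(\bx', x_3)\,dx_3$ holds only for almost every $\bx'$; this is exactly why the conclusion is stated ``for a.a.\ $\bx'$.'' By Fubini's theorem the inner integral $\bx' \mapsto \int_0^\eps \psi(\bx', x_3)\,dx_3$ is well defined in $L^2(Q)$ (indeed by the Cauchy--Schwarz bound of Lemma~\ref{lemma2.2}), so all the manipulations above are justified almost everywhere.

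I do not expect any genuine obstacle here: the statement is essentially the assertion that subtracting the vertical average removes the zeroth Fourier mode in the thin direction, and the proof is a one-line cancellation once the definitions are unwound. The only thing to be attentive to is tracking the density extension so that each pointwise identity is understood in the almost-everywhere sense.
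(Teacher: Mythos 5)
Your proof is correct and takes essentially the same route as the paper's: both rest on the single cancellation $\int_0^\eps \widehat{M}_\eps\psi(\bx',x_3)\,dx_3 = \eps\, M_\eps\psi(\bx') = \int_0^\eps \psi(\bx',x_3)\,dx_3$, which the paper packages as the operator identity $M_\eps\circ\widehat{M}_\eps = M_\eps$ proved on $C(Q_\eps)$ and extended by density. Your closing remark --- that the $L^2$-extension of $M_\eps$ agrees a.e.\ with the integral formula because the integral operator is itself bounded and agrees with $M_\eps$ on the dense subspace --- is precisely the density step the paper performs at the operator level, so no gap remains.
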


\begin{proof}
Let us choose and fix $\psi \in L^2(Q_\eps)$.  Then by the definitions of the operators involved we have the following equality in $L^2(\bbS)$:
\[
\inteps \widehat{N}_\eps \psi(\bx', x_3)\,dx_3 = \eps \left[M_\eps \widehat{N}_\eps \psi\right](\bx').
\]
Therefore ,we deduce that in order to prove equality \eqref{eqn-Nhat_eps-prop1}, it is sufficient to show that
\[
M_\eps \widehat{N}_\eps = 0.
\]
Hence, by taking into account definitions \eqref{eqn-Nhat_eps} of $\widehat{N}_\eps$, we infer that it is sufficient to prove that
\[
M_\eps \circ \what{M}_\eps = M_\eps.
\]
Let us choose $\psi \in C(Q_\eps)$  and put  $\phi = \what{M}_\eps \psi$, i.e.
\[ \phi(\bx', y_3) = \frac{1}{\eps}\inteps \psi(\bx', x_3)\,dx_3, \qquad y_3 \in (0,\eps).\]
Thus,
\begin{align*}
[M_\eps \phi](\bx') & = \frac{1}{\eps}\inteps \phi(\bx', y_3)\,dy_3 = \frac{1}{\eps}\inteps \left(\frac{1}{\eps}\inteps \psi(\bx', x_3)\,dx_3\right)dy_3 \\
&   = \frac{1}{\eps}\inteps M_\eps \psi(\bx')\,dy_3 = M_\eps \psi (\bx'), \qquad  \bx' \in \bbS.
\end{align*}
Thus, we proved $M_\eps \circ\what{M}_\eps \psi = M_\eps \psi$ for every $\psi \in C({Q}_\eps)$.
Since $C({Q}_\eps)$ is dense in $L^2(Q_\eps)$ and the maps $M_\eps$ and $M_\eps \circ \what{M}_\eps$ are bounded in $L^2(Q_\eps)$, we conclude that we have  proved \eqref{eqn-Nhat_eps-prop1}.
\end{proof}

In the following two lemmas we enlist some properties of operators $\what{M}_\eps$ and $\what{N}_\eps$ which we will use repeatedly later in this and following sections.

\begin{lemma}
\label{lem.avg_op_sca_prop1}
Let $\eps > 0$ and $\what{M}_\eps$, $\what{N}_\eps$ be the maps defined as in \eqref{eqn-hatM_eps} and \eqref{eqn-Nhat_eps} respectively. Then for $\psi \in L^2(Q_\eps)$
\begin{align}
    \label{eqn-Mhat-squ}
    &\what{M}_\eps\left(\what{M}_\eps \psi \right) = \what{M}_\eps \psi, \\
    \label{eqn-Nhat-squ}
    &\what{N}_\eps\left(\what{N}_\eps \psi\right) = \what{N}_\eps \psi, \\
    \label{eqn-Mhat-Nhat-perp}
    &\what{M}_\eps\left(\what{N}_\eps \psi\right) = 0, \qquad \mbox{and} \qquad \what{N}_\eps \left(\what{M}_\eps \psi \right) = 0.
\end{align}
\end{lemma}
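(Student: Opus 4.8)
The plan is to recognise that the three displayed identities together assert precisely that $\what{M}_\eps$ and $\what{N}_\eps=\mathrm{Id}-\what{M}_\eps$ constitute a complementary pair of bounded linear projections on $L^2(Q_\eps)$. Since $\what{N}_\eps$ is \emph{defined} in \eqref{eqn-Nhat_eps} as $\mathrm{Id}-\what{M}_\eps$, and $\what{M}_\eps$ is linear (being, by \eqref{eqn-hatM_eps}, the composition of the linear map $M_\eps$ with the constant-in-$y_3$ extension), the entire lemma will follow by elementary operator algebra once the single idempotency relation \eqref{eqn-Mhat-squ}, namely $\what{M}_\eps\circ\what{M}_\eps=\what{M}_\eps$, is in hand. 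Accordingly I would establish \eqref{eqn-Mhat-squ} first and then deduce the remaining two identities purely formally.

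To prove \eqref{eqn-Mhat-squ} I would argue as follows. By \eqref{eqn-hatM_eps} the function $\what{M}_\eps\psi(\bx',y_3)=M_\eps\psi(\bx')$ is independent of the vertical variable $y_3$; and for any $\phi$ of the form $\phi(\bx',y_3)=\xi(\bx')$ one has $M_\eps\phi=\xi$ by the observation recorded just after Remark~\ref{rem.M_eps_dual}, whence $\what{M}_\eps\phi=\phi$. Applying this with $\phi=\what{M}_\eps\psi$ yields $\what{M}_\eps(\what{M}_\eps\psi)=\what{M}_\eps\psi$, which is \eqref{eqn-Mhat-squ}. Equivalently, one may read off the same conclusion directly from the identity $M_\eps\circ\what{M}_\eps=M_\eps$ already established inside the proof of Lemma~\ref{lemma_avgN}, since $\what{M}_\eps(\what{M}_\eps\psi)(\bx',y_3)=M_\eps(\what{M}_\eps\psi)(\bx')=M_\eps\psi(\bx')=\what{M}_\eps\psi(\bx',y_3)$.

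With \eqref{eqn-Mhat-squ} available, I would finish by linearity. For \eqref{eqn-Mhat-Nhat-perp}, writing $\what{N}_\eps\psi=\psi-\what{M}_\eps\psi$ gives $\what{M}_\eps(\what{N}_\eps\psi)=\what{M}_\eps\psi-\what{M}_\eps(\what{M}_\eps\psi)=0$, and symmetrically $\what{N}_\eps(\what{M}_\eps\psi)=\what{M}_\eps\psi-\what{M}_\eps(\what{M}_\eps\psi)=0$. For \eqref{eqn-Nhat-squ}, $\what{N}_\eps(\what{N}_\eps\psi)=\what{N}_\eps\psi-\what{M}_\eps(\what{N}_\eps\psi)=\what{N}_\eps\psi$, using the first half of \eqref{eqn-Mhat-Nhat-perp}. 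The only genuinely substantive point is the idempotency \eqref{eqn-Mhat-squ}, which is essentially a restatement of $M_\eps\circ\what{M}_\eps=M_\eps$; there is no real obstacle here, and in particular no density or boundedness argument is needed beyond what was already used for Lemma~\ref{lemma_avgN}, since all remaining steps are algebraic.
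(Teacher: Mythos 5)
Your proposal is correct and follows essentially the same route as the paper: establish the idempotency \eqref{eqn-Mhat-squ} by a direct computation (equivalently, via the identity $M_\eps\circ\what{M}_\eps=M_\eps$ from the proof of Lemma~\ref{lemma_avgN}) and then obtain the remaining identities by operator algebra. The only difference is that you derive $\what{M}_\eps(\what{N}_\eps\psi)=0$ purely from linearity and idempotency, whereas the paper re-proves it by invoking Lemma~\ref{lemma_avgN} together with a density argument; your shortcut is valid and slightly cleaner.
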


\begin{proof}
Let $\psi \in C(Q_\eps)$. Put $\phi = \what{M}_\eps \psi$, i.e for every $y_3 \in (0,\eps)$
\[
\phi(\bx', y_3) = [M_\eps \psi](\bx') = \frac{1}{\eps}\int_0^\eps \psi(\bx',x_3)\,dx_3.
\]
Next for $z_3 \in (0,\eps)$,
\begin{align*}
    [\what{M}_\eps \phi](\bx', z_3) & = [M_\eps \phi](\bx') = \frac{1}{\eps} \inteps \phi(\bx', y_3)\,dy_3 \\
    & = \frac{1}{\eps} \inteps \left( \frac{1}{\eps} \inteps \psi(\bx', x_3)\,dx_3\right) dy_3 \\
    & = \frac{1}{\eps}\inteps M_\eps\psi (\bx')\,dy_3 = [\what{M}_\eps \psi](\bx', z_3).
\end{align*}
Hence, we proved \eqref{eqn-Mhat-squ} for every $\psi \in C(Q_\eps)$. Since $C(Q_\eps)$ is dense in $L^2(Q_\eps)$ and $\what{M}_\eps$ is a bounded map on $L^2(Q_\eps)$, the identity holds true for every $\psi \in L^2(Q_\eps)$.

\noindent \textbf{Proof of first part of \eqref{eqn-Mhat-Nhat-perp}}. Let $\psi \in C(Q_\eps)$. Put $\phi = \what{N}_\eps \psi \in C(Q_\eps)$. By Lemma~\ref{lemma_avgN}
\[
\inteps \phi(\bx',x_3)\,dx_3 = 0.
\]
Therefore, we infer that
\[
[\what{M}_\eps \phi](\bx', y_3) = \frac{1}{\eps}\inteps \phi(\bx',x_3)\,dx_3 = 0,
\]
for $\bx' \in Q$ and every $y_3 \in (0,\eps)$. Thus, we have established \eqref{eqn-Mhat-Nhat-perp}$_1$ for all $\psi \in C(Q_\eps)$. Using the density argument, we can extend it to all $\psi \in L^2(Q_\eps)$.

For \eqref{eqn-Nhat-squ}, by definition of $\what{N}_\eps$ and \eqref{eqn-Mhat-Nhat-perp}$_1$, we obtain for $\psi \in L^2(Q_\eps)$
\[
\what{N}_\eps\left(\what{N}_\eps \psi\right) = \what{N}_\eps \psi - \what{M}_\eps \left(\what{N}_\eps \psi\right) = \what{N}_\eps \psi.
\]
Now for the second part of \eqref{eqn-Mhat-Nhat-perp} again by the definition of $\what{N}_\eps$ and \eqref{eqn-Mhat-squ}, for $\psi \in L^2(Q_\eps)$:
\[
\what{N}_\eps\left(\what{M}_\eps \psi\right) = \what{M}_\eps \psi - \what{M}_\eps\left(\what{M}_\eps \psi\right) = 0.
\]
\end{proof}

\begin{lemma}
\label{lem.na_Mhat}
Let $\eps > 0$ and $\psi \in H^1(Q_\eps)$. Then $\what{M}_\eps \psi$, $\what{N}_\eps \psi \in H^1(Q_\eps)$. Moreover
\[\|\nabla \what{M}_\eps \psi\|^2_{\lL^2(Q_\eps)} \le \|\nabla \psi\|^2_{\lL^2(Q_\eps)}.\]
\end{lemma}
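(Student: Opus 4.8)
The plan is to first establish the identities for the weak derivatives of $\what{M}_\eps\psi$, then deduce the norm bound from the two averaging estimates already proved, and finally extend everything from smooth functions to all of $H^1(Q_\eps)$ by density. Since $Q$ is a smooth bounded domain, $Q_\eps=Q\times(0,\eps)$ is a Lipschitz domain, so $C^\infty(\overline{Q_\eps})$ is dense in $H^1(Q_\eps)$; I fix $\psi\in H^1(Q_\eps)$ and a sequence $\psi_n\in C^\infty(\overline{Q_\eps})$ with $\psi_n\to\psi$ in $H^1(Q_\eps)$. For each smooth $\psi_n$ the function $\what{M}_\eps\psi_n(\bx',y_3)=\frac1\eps\int_0^\eps\psi_n(\bx',x_3)\,dx_3$ does not depend on $y_3$, so $\partial_3\what{M}_\eps\psi_n=0$, while differentiation under the integral sign gives, for $j=1,2$, the key commutation relation $\partial_j\what{M}_\eps\psi_n=\what{M}_\eps(\partial_j\psi_n)$.

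With these formulas in hand the gradient estimate is immediate for smooth functions. Combining the scaling identity of Lemma~\ref{lem-2.2} with the bound \eqref{eq:2.2} of Lemma~\ref{lemma2.2} shows that $\what{M}_\eps$ is a contraction on $L^2(Q_\eps)$: for any $\phi\in L^2(Q_\eps)$,
\[
\|\what{M}_\eps\phi\|_{\lL^2(Q_\eps)}^2=\eps\,\|M_\eps\phi\|_{\lL^2(Q)}^2\le\|\phi\|_{\lL^2(Q_\eps)}^2 .
\]
Applying this with $\phi=\partial_j\psi_n$ and summing over $j=1,2$ (the vertical derivative contributing nothing) gives
\[
\|\nabla\what{M}_\eps\psi_n\|_{\lL^2(Q_\eps)}^2
=\sum_{j=1}^2\|\what{M}_\eps(\partial_j\psi_n)\|_{\lL^2(Q_\eps)}^2
\le\sum_{j=1}^2\|\partial_j\psi_n\|_{\lL^2(Q_\eps)}^2
\le\|\nabla\psi_n\|_{\lL^2(Q_\eps)}^2 ,
\]
which is exactly the desired inequality at the level of smooth functions.

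It remains to pass to the limit. By the contraction property just noted, $\what{M}_\eps\psi_n\to\what{M}_\eps\psi$ in $L^2(Q_\eps)$, and likewise $\what{M}_\eps(\partial_j\psi_n)\to\what{M}_\eps(\partial_j\psi)$ in $L^2(Q_\eps)$ for $j=1,2$ since $\partial_j\psi_n\to\partial_j\psi$ in $L^2(Q_\eps)$; hence the gradients $\nabla\what{M}_\eps\psi_n$ converge in $\lL^2(Q_\eps)$. Because the weak gradient is a closed operator, $\what{M}_\eps\psi\in H^1(Q_\eps)$ with $\partial_j\what{M}_\eps\psi=\what{M}_\eps(\partial_j\psi)$ for $j=1,2$ and $\partial_3\what{M}_\eps\psi=0$, and the smooth-level inequality passes to the limit to give $\|\nabla\what{M}_\eps\psi\|_{\lL^2(Q_\eps)}^2\le\|\nabla\psi\|_{\lL^2(Q_\eps)}^2$. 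Finally, $\what{N}_\eps\psi=\psi-\what{M}_\eps\psi$ is a difference of two $H^1(Q_\eps)$ functions, so $\what{N}_\eps\psi\in H^1(Q_\eps)$ as well. The only genuinely delicate point is the commutation $\partial_j\what{M}_\eps\psi=\what{M}_\eps(\partial_j\psi)$ for weak derivatives; rather than justifying differentiation under the integral directly for $H^1$ functions, I obtain it cleanly through the density approximation together with closedness of the weak gradient, which is the device that makes the whole argument rigorous.
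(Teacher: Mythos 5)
Your proof is correct and follows essentially the same route as the paper, whose proof is just the one-line remark that the lemma is ``a direct consequence of the definition of $\what{M}_\eps$, $\what{N}_\eps$, the scaling property \eqref{eqn-scaling} and the estimate \eqref{eq:2.3}'' --- precisely the contraction property $\|\what{M}_\eps\phi\|_{L^2(Q_\eps)}\le\|\phi\|_{L^2(Q_\eps)}$ combined with the commutation of $\what{M}_\eps$ with the horizontal derivatives that you spell out. Your version simply supplies the density/closed-graph details that the paper leaves implicit.
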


\begin{proof}
Let $\eps > 0$ and $\psi \in H^1(Q_\eps)$. The proof is a direct consequence of the definition of $\what{M}_\eps$, $\what{N}_\eps$, the scaling property \eqref{eqn-scaling} and the estimate \eqref{eq:2.3}.
\end{proof}

\begin{lemma}
\label{lem.avg_op_sca_prop2}
Let $\eps>0$ and $\what{M}_\eps$, $\what{N}_\eps$ be the maps defined as in \eqref{eqn-hatM_eps} and \eqref{eqn-Nhat_eps} respectively. Then
\begin{enumerate}
\item[(i)] $\what{M}_\eps$ is self-adjoint on $L^2(Q_\eps)$, i.e.
\[
(\what{M}_\eps \psi, \xi)_{L^2(Q_\eps)} = (\psi, \what{M}_\eps \xi)_{L^2(Q_\eps)},
\qquad \psi, \xi \in L^2(Q_\eps).
\]
\item[(ii)] The ranges of $\what{M}_\eps$ and $\what{N}_\eps$ are perpendicular, i.e.
\[
 (\what{M}_\eps \psi, \what{N}_\eps \xi)_{L^2(Q_\eps)} = 0,
\qquad \psi, \xi \in L^2(Q_\eps).
\]
\item[(iii)] If $\psi, \xi \in H^1(Q_\eps)$ then
\[
 \left(\nabla\what{M}_\eps \psi, \nabla\what{N}_\eps \xi\right)_{\lL^2(Q_\eps)} = 0.
\]
\end{enumerate}
\end{lemma}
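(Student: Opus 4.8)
The plan is to prove the three parts in the stated order, using each as a stepping stone for the next, so that only part (i) and the differentiation issue in part (iii) require genuine work.

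\textbf{Part (i).} I would compute $(\what{M}_\eps \psi, \xi)_{L^2(Q_\eps)}$ directly by Fubini's theorem. Since $\what{M}_\eps \psi$ is independent of the vertical variable and equals $M_\eps \psi(\bx')$, integrating first in $y_3$ collapses the $\xi$ factor to its own vertical average: one finds $(\what{M}_\eps \psi, \xi)_{L^2(Q_\eps)} = \eps\,(M_\eps \psi, M_\eps \xi)_{L^2(Q)}$, an expression manifestly symmetric in $\psi$ and $\xi$. Reading the same identity backwards gives $\eps\,(M_\eps \psi, M_\eps \xi)_{L^2(Q)} = (\psi, \what{M}_\eps \xi)_{L^2(Q_\eps)}$, which is self-adjointness. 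A quicker alternative, if preferred, is to note from Remark~\ref{rem.M_eps_dual} that $M_\eps^\ast M_\eps = \tfrac1\eps \what{M}_\eps$, i.e. $\what{M}_\eps = \eps\, M_\eps^\ast M_\eps$, whence $\what{M}_\eps^\ast = \eps\, M_\eps^\ast M_\eps = \what{M}_\eps$. Either way I would first establish the identity for $\psi, \xi \in C(Q_\eps)$ and extend by density, using the $L^2$-boundedness of $\what{M}_\eps$ from Lemma~\ref{lemma2.2}.

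\textbf{Part (ii).} This is immediate from (i) combined with the first identity in \eqref{eqn-Mhat-Nhat-perp}, namely $\what{M}_\eps(\what{N}_\eps \xi) = 0$: indeed $(\what{M}_\eps \psi, \what{N}_\eps \xi)_{L^2(Q_\eps)} = (\psi, \what{M}_\eps \what{N}_\eps \xi)_{L^2(Q_\eps)} = (\psi, 0)_{L^2(Q_\eps)} = 0$.

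\textbf{Part (iii).} The key structural observation is that $\what{M}_\eps \psi$ is constant in $x_3$, so $\partial_3 \what{M}_\eps \psi = 0$ and only the horizontal derivatives contribute to the gradient inner product. For $i \in \{1,2\}$, differentiating under the integral sign shows that the horizontal derivative commutes with the averaging (which acts only in the vertical variable), giving $\partial_i \what{M}_\eps \psi = \what{M}_\eps(\partial_i \psi)$ and, by definition of $\what{N}_\eps$, $\partial_i \what{N}_\eps \xi = \what{N}_\eps(\partial_i \xi)$. Hence $(\nabla \what{M}_\eps \psi, \nabla \what{N}_\eps \xi)_{\lL^2(Q_\eps)} = \sum_{i=1,2} (\what{M}_\eps(\partial_i \psi), \what{N}_\eps(\partial_i \xi))_{L^2(Q_\eps)}$, and every summand vanishes by part (ii). Lemma~\ref{lem.na_Mhat} guarantees that $\what{M}_\eps \psi, \what{N}_\eps \xi \in H^1(Q_\eps)$, so these manipulations are legitimate.

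The only non-formal point, and the step I expect to require the most care, is justifying the commutation $\partial_i \what{M}_\eps = \what{M}_\eps \partial_i$ for weak derivatives (equivalently, that $\what{M}_\eps \psi$ has a vanishing vertical weak derivative). I would handle this by verifying it first for $\psi \in C^1(\overline{Q_\eps})$, where differentiation under the integral sign is classical, and then extending to all $\psi \in H^1(Q_\eps)$ by density together with the $L^2$-continuity of $\what{M}_\eps$ and of $\nabla \what{M}_\eps$ supplied by Lemmas~\ref{lemma2.2} and~\ref{lem.na_Mhat}.
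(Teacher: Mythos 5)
Your proposal is correct and follows essentially the same route as the paper: part (i) by the Fubini computation yielding the symmetric expression $\eps\,(M_\eps\psi,M_\eps\xi)_{L^2(Q)}$, part (ii) as a one-line consequence of (i) together with the algebraic identities of Lemma~\ref{lem.avg_op_sca_prop1}, and part (iii) by observing $\partial_3\what{M}_\eps\psi=0$ and that the horizontal derivatives commute with the vertical averaging, then applying (ii) componentwise. The only cosmetic difference is that in (ii) the paper expands via $\what{N}_\eps=\mathrm{Id}-\what{M}_\eps$ and uses idempotence of $\what{M}_\eps$ rather than $\what{M}_\eps\what{N}_\eps=0$, which is an equivalent use of the same prior lemma.
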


\begin{proof}
\noindent\textbf{(i)} Let $\psi , \xi \in L^2(Q_\eps)$, then by the definition of $\what{M}_\eps$:
\begin{align*}
(\what{M}_\eps \psi, \xi)_{L^2(Q_\eps)}
&= \int_{Q_\eps} [\what{M}_\eps \psi] (\bx) \xi(\bx)\,d\bx \\
&= \int_Q \inteps [M_\eps \psi](\bx') \xi(\bx',x_3)\,d\bx'dx_3 \\
&= \int_Q \left(\frac{1}{\eps}\inteps \psi (\bx',x_3)\,dx_3\right) \left(\inteps \xi(\bx',x_3)\,dx_3\right)\,d\bx' \\
&= \int_Q \inteps \psi (\bx',x_3) \left(\frac{1}{\eps}\inteps \xi(\bx',x_3)\,dx_3\right)\,d\bx' dx_3\\
& = \int_Q\inteps \psi(\bx',x_3) M_\eps \xi(x')\,d\bx'dx_3 = (\psi, \what{M}_\eps \xi)_{L^2(Q_\eps)},
\end{align*}
showing that $\what{M}_\eps$ is self adjoint on $L^2(Q_\eps)$.

\noindent \textbf{(ii)} Recall from \eqref{eqn-Mhat-squ} that $\what{M}_\eps \circ \what{M}_\eps = \what{M}_\eps$ and from (i) $\what{M}_\eps$ is self-adjoint. Therefore, for $\psi ,\xi \in L^2(Q_\eps)$, using the definition of $\what{N}_\eps$:
\begin{align*}
(\what{M}_\eps \psi, \what{N}_\eps \xi)_{L^2(Q_\eps)} & =
 (\what{M}_\eps \psi, \xi)_{L^2(Q_\eps)} - (\what{M}_\eps \psi, \what{M}_\eps \xi)_{L^2(Q_\eps)} \\
& = \left(\what{M}_\eps \psi, \xi\right)_{L^2(Q_\eps)} - \left(\what{M}_\eps\left(\what{M}_\eps \psi \right), \xi\right)_{L^2(Q_\eps)} = 0,
\end{align*}
establishing the orthogonality of the ranges of $\what{M}_\eps$ and $\what{N}_\eps$.

\noindent \textbf{(iii)}
Let $\psi, \xi \in H^1(Q_\eps)$. First note that, since $\what{M}_\eps \psi$ is independent of $x_3$, we have
\[
\nabla \what{M}_\eps \psi
 = \left(\partial_1 \what{M}_\eps \psi, \partial_2 \what{M}_\eps \psi, \partial_3 \what{M}_\eps \psi \right)
 = \left(\what{M}_\eps \partial_1 \psi,  \what{M}_\eps \partial_2 \psi, 0 \right).
\]
Similarly,
\[
\nabla \what{N}_\eps \xi = \left( \what{N}_\eps \partial_1\xi, \what{N}_\eps\partial_2 \xi, \partial_3 g\right).
\]
Hence, from (ii) we get
\[
\left(\nabla \what{M_\eps} \psi, \nabla \what{N}_\eps \xi\right)_{\lL^2(Q_\eps)}
 = \left(\what{M}_\eps \partial_1\psi, \what{N}_\eps\partial_1\xi \right)_{L^2(Q_\eps)}
    + \left( \what{M}_\eps \partial_2 \psi, \what{N}_\eps \partial_2 \xi\right)_{L^2(Q_\eps)} + 0 = 0.
\]
\end{proof}

A direct consequence of Lemma~\ref{lem.na_Mhat} and Lemma~\ref{lem.avg_op_sca_prop2} is the following corollary.

\begin{corollary}
\label{cor.Mhat-Nhat-pyth}
Let $\eps > 0$ and $\psi \in L^2(Q_\eps)$, $\xi \in H^1(Q_\eps)$. Then
\begin{equation}
    \label{eqn-Mhat-Nhat-pyth}
    \begin{split}
        \|\psi\|^2_{L^2(Q_\eps)} &= \|\what{M}_\eps \psi\|^2_{L^2(Q_\eps)} + \|\what{N}_\eps \psi\|^2_{L^2(Q_\eps)},\\
        \|\nabla \xi\|^2_{L^2(Q_\eps)} &= \|\nabla \what{M}_\eps \xi\|^2_{L^2(Q_\eps)} + \|\nabla \what{N}_\eps \xi\|^2_{L^2(Q_\eps)}.
    \end{split}
\end{equation}
\end{corollary}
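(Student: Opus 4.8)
The plan is to exploit the fact that, by the very definition \eqref{eqn-Nhat_eps} of $\what{N}_\eps$, every $\psi$ splits as $\psi = \what{M}_\eps\psi + \what{N}_\eps\psi$, and then to read off both identities as instances of the Pythagorean theorem once the relevant cross terms are shown to vanish. The orthogonality statements I need are already in hand: Lemma~\ref{lem.avg_op_sca_prop2}(ii) for the $L^2$ inner product and Lemma~\ref{lem.avg_op_sca_prop2}(iii) for the gradient inner product. So neither identity requires any new computation beyond expanding a square.

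For the first identity I would take $\psi \in L^2(Q_\eps)$, write $\psi = \what{M}_\eps\psi + \what{N}_\eps\psi$, and expand
\[
\|\psi\|^2_{L^2(Q_\eps)} = \|\what{M}_\eps\psi\|^2_{L^2(Q_\eps)} + 2\left(\what{M}_\eps\psi, \what{N}_\eps\psi\right)_{L^2(Q_\eps)} + \|\what{N}_\eps\psi\|^2_{L^2(Q_\eps)}.
\]
The middle term is zero by Lemma~\ref{lem.avg_op_sca_prop2}(ii) (taking $\xi = \psi$ there), which yields \eqref{eqn-Mhat-Nhat-pyth}$_1$ at once.

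For the second identity I would first invoke Lemma~\ref{lem.na_Mhat} to guarantee that, for $\xi \in H^1(Q_\eps)$, both $\what{M}_\eps\xi$ and $\what{N}_\eps\xi$ again belong to $H^1(Q_\eps)$; this is the only point that needs a moment's care, since without it the decomposition of the gradient would not be justified. Granting this, linearity of the weak gradient gives $\nabla\xi = \nabla\what{M}_\eps\xi + \nabla\what{N}_\eps\xi$ in $\lL^2(Q_\eps)$, and expanding the square produces a cross term $2\left(\nabla\what{M}_\eps\xi, \nabla\what{N}_\eps\xi\right)_{\lL^2(Q_\eps)}$, which vanishes by Lemma~\ref{lem.avg_op_sca_prop2}(iii). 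This gives \eqref{eqn-Mhat-Nhat-pyth}$_2$ and completes the argument. I expect no genuine obstacle here: the corollary is purely the orthogonal-projection structure of $\what{M}_\eps$ and $\what{N}_\eps$ made explicit, and the only substantive input — that the two pieces of $\xi$ stay in $H^1(Q_\eps)$ — has already been secured in Lemma~\ref{lem.na_Mhat}.
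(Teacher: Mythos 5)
Your proposal is correct and follows exactly the route the paper intends: the paper gives no separate proof, stating only that the corollary is a direct consequence of Lemma~\ref{lem.na_Mhat} and Lemma~\ref{lem.avg_op_sca_prop2}, which is precisely the decomposition-plus-orthogonality argument you spell out. Nothing is missing.
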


Next we introduce the averaging operators for $\R^3$-valued vector fields using the maps defined above (for scalar functions) as follows$\colon$
\begin{equation}\label{eqn-M_eps-tilde}
\begin{split}
\wtd{M}_\eps\colon &\lL^2(Q_\eps) \rightarrow \lL^2(Q_\eps) \\
 &u \mapsto (\what{M}_\eps u_1, \what{M}_\eps u_2, 0)
 \end{split}
 \end{equation}
and
\begin{equation}
    \label{eqn-N_eps-tilde}
    \wtd{N}_\eps = \mathrm{Id} - \wtd{M}_\eps \in \mathcal{L}\left(\lL^2(Q_\eps)\right),
\end{equation}
where $\mathrm{Id}$ is the identity map on $\LQeps$.

If $u$ is independent of $x_3$ and lives on $\R^2$, i.e. $u(\bx',x_3) = (u_1(\bx'), u_2(\bx'), 0)$ then
\[\wtd{M}_\eps u(\bx',x_3) = (u_1(\bx'), u_2(\bx'), 0) = u(\bx',x_3) \quad \mbox{ for } \bx' \in Q.\]

\begin{lemma}\label{lemm2_1}
If a vector field $u:Q_\eps \rightarrow \R^3$ satisfies the boundary conditions
\eqref{eqn-snse3}, i.e.
\begin{align*}
&u_3 = 0, \; \partial_3 u_j = 0 \text{ for } j = 1, 2 \text{ on } \Gamma^\eps_h \mbox{ and }\, u = 0 \text{ on } \Gamma^\eps_\ell
\end{align*}
then
\begin{align}
& M_\eps u_1 = M_\eps u_2 = 0 \text{  on } \partial Q,\label{eqn-int3Meps} \\
& \wtd{N}_\eps u \cdot \vec{n}_\eps = 0 \text{ on } \Gamma_{h}^\eps.
\label{eqn-int3Neps}
\end{align}
\end{lemma}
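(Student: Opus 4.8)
The plan is to treat the two claims separately; both follow from unwinding the definitions of the averaging operators together with the prescribed boundary behaviour of $u$, and in each case it suffices to argue for $u \in C(\overline{Q_\eps})$ and then pass to the general case by the density of $C(\overline{Q_\eps})$ in the relevant Sobolev space and the continuity of the averaging maps (Lemmas~\ref{lemma2.2} and \ref{lem.na_Mhat}) together with that of the trace operator.

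For \eqref{eqn-int3Meps} I would fix $\bx' \in \partial Q$ and observe that the whole vertical fibre $\{\bx'\} \times (0,\eps)$ lies on the lateral boundary $\Gamma^\eps_\ell = \partial Q \times (0,\eps)$. By the boundary condition \eqref{eqn-snse4}, $u = 0$ on $\Gamma^\eps_\ell$, hence $u_j(\bx', x_3) = 0$ for every $x_3 \in (0,\eps)$ and $j = 1,2$. Substituting this into the definition \eqref{eqn-M_eps} of $M_\eps$ gives $M_\eps u_j(\bx') = \frac{1}{\eps}\inteps u_j(\bx', x_3)\, dx_3 = 0$, which is exactly the asserted vanishing of $M_\eps u_1$ and $M_\eps u_2$ on $\partial Q$.

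For \eqref{eqn-int3Neps} I would first read off from the definitions \eqref{eqn-M_eps-tilde}--\eqref{eqn-N_eps-tilde} that $\wtd{N}_\eps u = (\what{N}_\eps u_1, \what{N}_\eps u_2, u_3)$; the key point is that $\wtd{M}_\eps$ annihilates the third component, so the third component of $\wtd{N}_\eps u$ is simply $u_3$ itself. On $\Gamma^\eps_h$ the outward normal is purely vertical, $\vec{n}_\eps = (0,0,\pm 1)$, so the inner product collapses to $\wtd{N}_\eps u \cdot \vec{n}_\eps = \pm u_3$. The boundary condition \eqref{eqn-snse3} forces $u_3 = 0$ on $\Gamma^\eps_h$, giving \eqref{eqn-int3Neps}. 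Note that only the condition $u_3 = 0$ enters here, not $\partial_3 u_j = 0$.

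The genuinely delicate point is the justification of \eqref{eqn-int3Meps} for merely $H^1$ fields, namely that the trace on $\partial Q$ of the vertical average $M_\eps u_j$ coincides with the vertical average of the trace of $u_j$ on $\Gamma^\eps_\ell$. I expect this interchange of trace and $x_3$-integration to be the main obstacle, and I would handle it by the density/continuity argument above, observing that $x_3$-integration is tangential to the vertical wall $\Gamma^\eps_\ell$ and therefore commutes with the (tangential) trace onto $\partial Q$. By contrast, \eqref{eqn-int3Neps} is a pointwise identity along the horizontal faces $\Gamma^\eps_h$ and requires no such interchange.
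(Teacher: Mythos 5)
Your proof is correct and follows essentially the same route as the paper: \eqref{eqn-int3Meps} comes from $u_1=u_2=0$ on the whole lateral fibre $\{\bx'\}\times(0,\eps)$ so the vertical average vanishes, and \eqref{eqn-int3Neps} from $\wtd{N}_\eps u=(\what{N}_\eps u_1,\what{N}_\eps u_2,u_3)$ together with $\vec{n}_\eps=(0,0,\pm1)$ and $u_3=0$ on $\Gamma_h^\eps$. The additional density/trace discussion you raise is a reasonable point of rigour but is not something the paper's own (pointwise) proof addresses.
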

\begin{proof}
Since $u=0$ on $\Gamma_\ell^{\eps} = \partial Q \times [0,\eps]$, so
$u_1 = u_2 = 0$ on $\partial Q \times [0,\eps]$ and hence
$M_\eps u_1 = M_\eps u_2 = 0$ on $\partial Q$. Now for \eqref{eqn-int3Neps}, using the definition of $\wtd{N}_\eps$, $\wtd{M}_\eps$ we get
\[\wtd{N}_\eps u = \left(\what{N}_\eps u_1, \what{N}_\eps u_2, u_3\right),\]
and observing that $\vec{n}_\eps = (0,0,\pm 1)$ on $\Gamma_h^{\eps,\pm}$, $u_3 = 0$ on $\Gamma_h^\eps$ we conclude that $\wtd{N}_\eps u \cdot \vec{n}_\eps = 0$.
\end{proof}

\begin{lemma}
\label{lem.div-free}
Let $u \in \rH_\eps$, then
\[
\ddiv \wtd{M}_\eps u = 0, \quad \mbox{ and } \quad \ddiv \wtd{N}_\eps u = 0 \qquad \mbox{in } Q_\eps.
\]
\end{lemma}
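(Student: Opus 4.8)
The plan is to compute $\ddiv \wtd{M}_\eps u$ explicitly and read off the conclusion from the two defining features of $\rH_\eps$ recorded in \eqref{eqn-H-u_3}: that $u$ is (weakly) divergence free and that its vertical component $u_3$ vanishes on $\Gamma^\eps_h$. First I would record that $\wtd{M}_\eps u = (\what{M}_\eps u_1, \what{M}_\eps u_2, 0)$ and that each $\what{M}_\eps u_j$ is independent of the vertical variable $x_3$. Consequently $\partial_3(\what{M}_\eps u_j)=0$, the third component contributes nothing, and --- exactly as already used in the proof of Lemma~\ref{lem.avg_op_sca_prop2}(iii) --- the horizontal derivatives commute with the averaging, $\partial_i \what{M}_\eps u_j = \what{M}_\eps \partial_i u_j$ for $i=1,2$. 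Hence
\[
\ddiv \wtd{M}_\eps u = \partial_1 \what{M}_\eps u_1 + \partial_2 \what{M}_\eps u_2 = \what{M}_\eps\big(\partial_1 u_1 + \partial_2 u_2\big).
\]

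Next I would invoke $\ddiv u = \partial_1 u_1 + \partial_2 u_2 + \partial_3 u_3 = 0$ to replace the horizontal divergence by $-\partial_3 u_3$, obtaining $\ddiv \wtd{M}_\eps u = -\what{M}_\eps \partial_3 u_3$. The crucial point is that averaging the vertical derivative of $u_3$ in the vertical direction collapses, by the fundamental theorem of calculus, to the boundary values:
\[
\what{M}_\eps \partial_3 u_3(\bx',y_3) = \frac{1}{\eps} \inteps \partial_3 u_3(\bx',x_3)\,dx_3 = \frac{1}{\eps}\big(u_3(\bx',\eps) - u_3(\bx',0)\big),
\]
which vanishes because $u_3 = 0$ on $\Gamma^\eps_h = \overline{Q}\times\{0,\eps\}$. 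This gives $\ddiv \wtd{M}_\eps u = 0$. The statement for $\wtd{N}_\eps u$ then follows immediately: since $\wtd{N}_\eps u = u - \wtd{M}_\eps u$ and $\ddiv$ is linear, $\ddiv \wtd{N}_\eps u = \ddiv u - \ddiv \wtd{M}_\eps u = 0 - 0 = 0$.

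The one genuine obstacle is regularity: an element $u \in \rH_\eps$ lies only in $\lL^2(Q_\eps)$, so the pointwise manipulations above --- in particular the trace $u_3|_{\Gamma^\eps_h}$ and the fundamental-theorem step --- are a priori only formal. I would make them rigorous in one of two equivalent ways. Either interpret every identity distributionally, testing $\ddiv \wtd{M}_\eps u$ against $\phi \in C_0^\infty(Q_\eps)$ and transferring the averaging onto $\phi$ through the self-adjointness of $\what{M}_\eps$ (Lemma~\ref{lem.avg_op_sca_prop2}(i)); for a divergence-free $\lL^2$ field the normal trace $u\cdot\vec{n}_\eps$ is well defined in $H^{-1/2}(\partial Q_\eps)$ and equals $\pm u_3$ on $\Gamma^\eps_h$, so the boundary contribution is legitimately zero. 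Alternatively, verify the identity first for smooth divergence-free fields satisfying the boundary conditions and then pass to the limit, using that $\wtd{M}_\eps$ is bounded on $\lL^2(Q_\eps)$ and that $\ddiv\colon \lL^2(Q_\eps)\to H^{-1}(Q_\eps)$ is continuous, whence $\ddiv\,\wtd{M}_\eps$ annihilates all of $\rH_\eps$ by density. I expect this trace bookkeeping to be the only delicate point; the algebra itself is short.
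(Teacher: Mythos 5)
Your computation is exactly the paper's: write $\ddiv \wtd{M}_\eps u = \what{M}_\eps(\partial_1 u_1+\partial_2 u_2) = -\what{M}_\eps\partial_3 u_3$, collapse the vertical average by the fundamental theorem of calculus to the boundary values of $u_3$ on $\Gamma^\eps_h$, and conclude by linearity for $\wtd{N}_\eps u$; the paper handles the regularity issue precisely via your second suggested route, namely proving the identity on a dense set of smooth divergence-free fields with $u\cdot\vec{n}_\eps=0$ on $\partial Q_\eps$ and passing to the limit. The proposal is correct and follows essentially the same approach.
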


\begin{proof}
Let
\[\rU_\eps := \left\{ u\in C^\infty(Q_\eps): \ddiv u = 0 \mbox{ in } Q_\eps \mbox{ and } u \cdot \vec{n}_\eps = 0 \mbox{ on } \partial Q_\eps\right\}, \]
and choose $u \in \rU_\eps$. Then by the definition of $\wtd{M}_\eps$, we have for $\bx = (\bx',x_3) \in Q_\eps$
\begin{align*}
    \ddiv \wtd{M}_\eps u (\bx) &= \partial_1 \what{M}_\eps u_1 + \partial_2 \what{M}_\eps u_2 + \partial_3 0 \\
    & = \what{M}_\eps \left[\partial_1 u_1 + \partial_2 u_2\right] \\
    & = - \what{M}_\eps \partial_3 u_3 = - \frac{1}{\eps} \inteps \partial_3 u_3(\bx',x_3)\,dx_3 \\
    & = -\frac{1}{\eps} \left[u_3(\bx',\eps) - u_3(\bx',0)\right] = 0,
\end{align*}
where we used that $\ddiv u = 0$ in $Q_\eps$ in the second last line and $u\cdot\vec{n}_\eps = 0$ on $\partial Q_\eps$ in the last line. Thus, we have proved that $\ddiv \wtd{M}_\eps = 0$ in $Q_\eps$ for $u \in \rU_\eps$. Since $\rU_\eps$ is dense in $\rH_\eps$, it holds true for every $u \in \rH_\eps$ too. The second part of the lemma for $\wtd{N}_\eps$ follows directly from the definition of $\wtd{N}_\eps$ and first part of the lemma.
\end{proof}

From Lemma~\ref{lemm2_1} and Lemma~\ref{lem.div-free}, we infer the following corollary:

\begin{corollary}
\label{cor.Heps}
If $u \in \rH_\eps$, then $\wtd{M}_\eps u$ and $\wtd{N}_\eps u$ belongs to $\rH_\eps$.
\end{corollary}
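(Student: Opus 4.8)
The plan is to verify directly that $\wtd{M}_\eps u$ and $\wtd{N}_\eps u$ satisfy each of the defining properties of $\rH_\eps$ in the characterisation \eqref{eqn-H-u_3}: membership in $\lL^2(Q_\eps;\R^3)$, the divergence-free condition in $Q_\eps$, the vanishing of the normal trace on $\Gamma_\ell^\eps$, and the vanishing of the third component on $\Gamma_h^\eps$. Membership in $\lL^2$ is immediate from the boundedness of $\what{M}_\eps$ on $L^2(Q_\eps)$ (a contraction, by Lemmas~\ref{lemma2.2} and~\ref{lem-2.2}) applied componentwise in \eqref{eqn-M_eps-tilde}, and the divergence-free condition for both $\wtd{M}_\eps u$ and $\wtd{N}_\eps u$ is exactly the content of Lemma~\ref{lem.div-free}. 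So only the two boundary conditions remain.

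For $\wtd{M}_\eps u = (\what{M}_\eps u_1, \what{M}_\eps u_2, 0)$ the third component vanishes identically, so the requirement $(\wtd{M}_\eps u)_3 = 0$ on $\Gamma_h^\eps$ holds trivially. On $\Gamma_\ell^\eps$ we have $\vec{n}_\eps = (\vec{n},0)$, so $\wtd{M}_\eps u \cdot \vec{n}_\eps = (M_\eps u_1)\,n_1 + (M_\eps u_2)\,n_2$, which vanishes on $\partial Q$ by \eqref{eqn-int3Meps}. This shows $\wtd{M}_\eps u \in \rH_\eps$. The statement for $\wtd{N}_\eps$ then follows at once by linearity: since $\rH_\eps$ is a closed linear subspace of $\lL^2(Q_\eps)$ and $u \in \rH_\eps$, the definition \eqref{eqn-N_eps-tilde} gives $\wtd{N}_\eps u = u - \wtd{M}_\eps u \in \rH_\eps$. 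Alternatively one checks the conditions for $\wtd{N}_\eps u = (\what{N}_\eps u_1, \what{N}_\eps u_2, u_3)$ directly, using \eqref{eqn-int3Neps} for the $\Gamma_h^\eps$ condition and $\wtd{N}_\eps u \cdot \vec{n}_\eps = u\cdot\vec{n}_\eps - \wtd{M}_\eps u \cdot \vec{n}_\eps = 0$ on $\Gamma_\ell^\eps$.

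The one point that needs care, and which I expect to be the main obstacle, is that Lemma~\ref{lemm2_1} is stated under the stronger hypothesis \eqref{eqn-snse3} (in particular $u = 0$ on $\Gamma_\ell^\eps$), whereas a generic $u \in \rH_\eps$ possesses only the weak normal trace $u \cdot \vec{n}_\eps = 0$ on $\Gamma_\ell^\eps$ and its components need not admit pointwise traces there. To bridge this gap I would run a density argument: the smooth fields in $D(\rA_\eps)$, which by \eqref{eqn-Aeps2} satisfy \eqref{eqn-snse3}, are dense in $\rH_\eps$, and for them Lemma~\ref{lemm2_1} applies verbatim, yielding $\wtd{M}_\eps u \in \rH_\eps$. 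Since $\wtd{M}_\eps$ is bounded on $\lL^2(Q_\eps)$ and $\rH_\eps$ is closed, the inclusion $\wtd{M}_\eps u \in \rH_\eps$ passes to the limit for every $u \in \rH_\eps$, after which $\wtd{N}_\eps u = u - \wtd{M}_\eps u \in \rH_\eps$. The divergence-free property requires no such approximation, being already available for all of $\rH_\eps$ through Lemma~\ref{lem.div-free}.
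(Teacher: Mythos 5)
Your proposal is correct and follows essentially the same route as the paper, which states the corollary without proof beyond the remark that it follows ``from Lemma~\ref{lemm2_1} and Lemma~\ref{lem.div-free}''. Your additional density argument (approximating a general $u \in \rH_\eps$ by smooth divergence-free fields satisfying the boundary conditions, then using boundedness of $\wtd{M}_\eps$ and closedness of $\rH_\eps$) correctly fills the trace-regularity gap that the paper leaves implicit, since Lemma~\ref{lemm2_1} as stated requires pointwise boundary data that a generic element of $\rH_\eps$ does not possess.
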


In the following lemma we extend the results from Lemma~\ref{lem.avg_op_sca_prop1}--Lemma~\ref{lem.avg_op_sca_prop2} to the averaging maps $\wtd{M}_\eps$ and $\wtd{N}_\eps$ defined for vector valued functions. The proofs of each of the statements in the lemma can be carried out analogously to the scalar case.

\begin{lemma}
\label{lem.avg_op_vec_prop}
Let $\eps > 0$ and $\wtd{M}_\eps$, $\wtd{N}_\eps$ be the maps as defined in \eqref{eqn-M_eps-tilde} and \eqref{eqn-N_eps-tilde} respectively. Then
\begin{enumerate}
    \item[(i)] for $u \in \lL^2(Q_\eps)$
\begin{align}
    \label{eqn-Mtilde-squ}
    &\wtd{M}_\eps\left(\wtd{M}_\eps u \right) = \wtd{M}_\eps u, \\
    \label{eqn-Ntilde-squ}
    &\wtd{N}_\eps\left(\wtd{N}_\eps u\right) = \wtd{N}_\eps u, \\
    \label{eqn-Mtilde-Ntilde-perp}
    &\wtd{M}_\eps\left(\wtd{N}_\eps u\right) = 0, \qquad \mbox{and} \qquad \wtd{N}_\eps \left(\wtd{M}_\eps u \right) = 0.
\end{align}
    \item[(ii)] for $u \in \hH^1(Q_\eps)$, $\wtd{M}_\eps u$, $\wtd{N}_\eps u \in \hH^1(Q_\eps)$ and
\[
\|\nabla \wtd{M}_\eps u\|^2_{\lL^2(Q_\eps)} \le \|\nabla u\|^2_{\lL^2(Q_\eps)}.
\]
    \item[(iii)] $\wtd{M}_\eps$ is self-adjoint on $\lL^2(Q_\eps)$, i.e.
\[
(\wtd{M}_\eps u, \rv)_{\lL^2(Q_\eps)} = (u, \wtd{M}_\eps \rv)_{\lL^2(Q_\eps)},
\qquad u, \rv \in \lL^2(Q_\eps).
\]
\item[(iv)] The ranges of $\wtd{M}_\eps$ and $\wtd{N}_\eps$ are perpendicular, i.e.
\[
 (\wtd{M}_\eps u, \wtd{N}_\eps \rv)_{\lL^2(Q_\eps)} = 0,
\qquad u, \rv \in \lL^2(Q_\eps).
\]
\item[(v)] If $u, \rv \in \hH^1(Q_\eps)$ then
\[
 \left(\nabla\wtd{M}_\eps u, \nabla\wtd{N}_\eps \rv\right)_{\lL^2(Q_\eps)} = 0.
\]
\end{enumerate}
\end{lemma}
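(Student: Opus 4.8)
The plan is to reduce every assertion to the corresponding scalar identity already established in Lemmas~\ref{lem.avg_op_sca_prop1}--\ref{lem.avg_op_sca_prop2}, exploiting that $\wtd{M}_\eps$ and $\wtd{N}_\eps$ act componentwise. Writing $u=(u_1,u_2,u_3)$, the definitions \eqref{eqn-M_eps-tilde} and \eqref{eqn-N_eps-tilde} give $\wtd{M}_\eps u=(\what{M}_\eps u_1,\what{M}_\eps u_2,0)$ and $\wtd{N}_\eps u=(\what{N}_\eps u_1,\what{N}_\eps u_2,u_3)$, so the only genuine difference from the scalar case lies in the third slot, where $\wtd{M}_\eps$ annihilates $u_3$ while $\wtd{N}_\eps$ retains it.

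For part (i) I would apply $\wtd{M}_\eps$ or $\wtd{N}_\eps$ to these explicit expressions and invoke \eqref{eqn-Mhat-squ} and \eqref{eqn-Mhat-Nhat-perp} in the first two components; the third component is $0$ in every case except $\wtd{N}_\eps(\wtd{N}_\eps u)$, where it equals $u_3$, matching $\wtd{N}_\eps u$. Once \eqref{eqn-Mtilde-squ} and the first identity in \eqref{eqn-Mtilde-Ntilde-perp} are in hand, \eqref{eqn-Ntilde-squ} and the second identity in \eqref{eqn-Mtilde-Ntilde-perp} follow by writing $\wtd{N}_\eps=\mathrm{Id}-\wtd{M}_\eps$, exactly as in the scalar proof. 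Part (ii) follows from Lemma~\ref{lem.na_Mhat} applied to $u_1$ and $u_2$: the $\hH^1$ membership is componentwise, and summing the two scalar gradient bounds gives $\|\nabla\wtd{M}_\eps u\|^2_{\lL^2(Q_\eps)}\le\|\nabla u_1\|^2_{\lL^2(Q_\eps)}+\|\nabla u_2\|^2_{\lL^2(Q_\eps)}\le\|\nabla u\|^2_{\lL^2(Q_\eps)}$, the last term carrying the extra nonnegative contribution from $u_3$.

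For parts (iii)--(v) I would expand each $\lL^2(Q_\eps)$ inner product as a sum of three scalar $L^2(Q_\eps)$ inner products over the components of the image. In (iii) the first two summands are handled by the scalar self-adjointness of Lemma~\ref{lem.avg_op_sca_prop2}(i), while the third summand vanishes on both sides because the third component of $\wtd{M}_\eps$ applied to anything is $0$. In (iv) and (v) the first two summands vanish by the scalar range-orthogonality and gradient-orthogonality of Lemma~\ref{lem.avg_op_sca_prop2}(ii)--(iii), and the third summand again vanishes since $(\wtd{M}_\eps u)_3=0$ forces both $(\wtd{M}_\eps u)_3=0$ and $\nabla(\wtd{M}_\eps u)_3=0$. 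The only point requiring care---the sole obstacle, and a purely bookkeeping one---is tracking that third component throughout, since there $\wtd{M}_\eps$ and $\wtd{N}_\eps$ no longer mirror the scalar operators; once its behaviour is recorded, every statement collapses to the already-proved scalar version applied to $u_1$ and $u_2$.
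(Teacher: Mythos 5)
Your proof is correct and follows exactly the route the paper intends: the paper simply remarks that each statement "can be carried out analogously to the scalar case," and your componentwise reduction to Lemmas~\ref{lem.avg_op_sca_prop1}--\ref{lem.avg_op_sca_prop2}, with the careful bookkeeping of the third component, is precisely that argument written out in full.
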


From Lemma~\ref{lem.avg_op_vec_prop}, we can deduce the following corollary.
\begin{corollary}\label{cor-Pythagoras}
If $u \in \lL^2(Q_\eps)$, then
\begin{equation}\label{eqn-pythagoras1}
 \|\wtd{M}_\eps u \|^2_{\lL^2(Q_\eps)} +
  \|\wtd{N}_\eps u\|^2_{\lL^2(Q_\eps)} = \|u\|^2_{\lL^2(Q_\eps)}.
\end{equation}
If $u \in \hH^{1}(Q_\eps)$, then
\begin{equation}\label{eqn-pythagoras2}
\|\nabla \wtd{M}_\eps u \|^2_{\lL^2(Q_\eps)} +
 \|\nabla \wtd{N}_\eps u\|^2_{\lL^2(Q_\eps)} = \|\nabla u\|^2_{\lL^2(Q_\eps)}.
\end{equation}
\end{corollary}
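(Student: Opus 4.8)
The plan is to prove both identities by the standard Pythagorean argument in the Hilbert space $\lL^2(Q_\eps)$, exploiting the fact that $\wtd{N}_\eps = \mathrm{Id} - \wtd{M}_\eps$ so that $u$ is literally the sum of its two projections, and that those two projections are orthogonal by the results already established in Lemma~\ref{lem.avg_op_vec_prop}. First I would record the decomposition
\[
u = \wtd{M}_\eps u + \wtd{N}_\eps u, \qquad u \in \lL^2(Q_\eps),
\]
which is immediate from the definition \eqref{eqn-N_eps-tilde} of $\wtd{N}_\eps$.

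For the first identity \eqref{eqn-pythagoras1} I would expand the squared norm of this sum using the $\lL^2(Q_\eps)$ inner product:
\[
\|u\|^2_{\lL^2(Q_\eps)}
 = \|\wtd{M}_\eps u\|^2_{\lL^2(Q_\eps)}
 + 2\left(\wtd{M}_\eps u, \wtd{N}_\eps u\right)_{\lL^2(Q_\eps)}
 + \|\wtd{N}_\eps u\|^2_{\lL^2(Q_\eps)},
\]
and then invoke part (iv) of Lemma~\ref{lem.avg_op_vec_prop}, which asserts precisely that the cross term $\left(\wtd{M}_\eps u, \wtd{N}_\eps u\right)_{\lL^2(Q_\eps)}$ vanishes. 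This yields \eqref{eqn-pythagoras1} at once.

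For the gradient identity \eqref{eqn-pythagoras2} the structure is identical, but I would first note that for $u \in \hH^1(Q_\eps)$ part (ii) of Lemma~\ref{lem.avg_op_vec_prop} guarantees $\wtd{M}_\eps u, \wtd{N}_\eps u \in \hH^1(Q_\eps)$, so that all gradients below are well defined; differentiating the decomposition gives $\nabla u = \nabla \wtd{M}_\eps u + \nabla \wtd{N}_\eps u$. Expanding $\|\nabla u\|^2_{\lL^2(Q_\eps)}$ exactly as before produces a cross term $\left(\nabla \wtd{M}_\eps u, \nabla \wtd{N}_\eps u\right)_{\lL^2(Q_\eps)}$, which vanishes by part (v) of the same lemma, leaving \eqref{eqn-pythagoras2}.

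Honestly, I do not expect a genuine obstacle here: both statements are just the parallelogram/Pythagoras identity once orthogonality is known, and all the orthogonality work has already been done in Lemma~\ref{lem.avg_op_vec_prop}. The only point requiring a moment of care is the regularity bookkeeping in the second part, namely checking via part (ii) that $\nabla \wtd{M}_\eps u$ and $\nabla \wtd{N}_\eps u$ are legitimate $\lL^2(Q_\eps)$ objects before writing their inner product; everything else is a two-line expansion and cancellation of the middle term.
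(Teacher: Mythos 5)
Your proof is correct and follows exactly the route the paper intends: the paper states the corollary as a direct consequence of Lemma~\ref{lem.avg_op_vec_prop}, i.e.\ the decomposition $u = \wtd{M}_\eps u + \wtd{N}_\eps u$ plus the orthogonality of the cross terms from parts (iv) and (v), with part (ii) supplying the regularity needed for the gradient identity. Nothing is missing.
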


Finally, we define a map that projects $\R^3$-valued vector fields defined on $Q_\eps$ to $\R^2$-valued vector fields defined on $Q$.
\begin{equation}\label{eqn-M_eps-0}
\begin{split}
 \ocirc{M}_\eps\colon  &\lL^2(Q_\eps) \rightarrow \lL^2(Q),\\
   & u \mapsto (M_\eps u_1, M_\eps u_2).
   \end{split}
\end{equation}
We remark that while the range of $\wtd{M}_\eps$ equals to $\lL^2(Q_\eps, \R^2)$,
the range of $\ocirc{M}_\eps$ equals to $\lL^2(Q, \R^2)$.

Analogous to the scaling property for scalar functions and corresponding averaging maps we have a scaling property for $\tMe$ and $\oMe$.

\begin{lemma}
\label{lem.vec_scaling}
Let $\eps > 0$ and  $u \in \LQeps$. Then
\begin{equation}\label{eqn-vscaling}
 \| \wtd{M_\eps} u\|^2_{\lL^2(Q_\eps)} = \eps \|\ocirc{M}_\eps u\|^2_{\lL^2(Q)}, \qquad u \in \LQeps.
\end{equation}
\end{lemma}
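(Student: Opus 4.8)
The plan is to reduce the vector-valued identity to the scalar scaling property already established in Lemma~\ref{lem-2.2}, applied separately to the two nontrivial components. By the definition \eqref{eqn-M_eps-tilde} of $\wtd{M}_\eps$ we have $\wtd{M}_\eps u = (\what{M}_\eps u_1, \what{M}_\eps u_2, 0)$, so the third component vanishes identically and contributes nothing to the norm; this is the structural fact that makes everything match up.

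First I would expand the squared norm on the left-hand side componentwise, using that the $\lL^2(Q_\eps)$ norm of an $\R^3$-valued field is the sum of the $L^2(Q_\eps)$ norms of its scalar components and that the third component is zero:
\[
\|\wtd{M}_\eps u\|^2_{\lL^2(Q_\eps)}
= \|\what{M}_\eps u_1\|^2_{L^2(Q_\eps)} + \|\what{M}_\eps u_2\|^2_{L^2(Q_\eps)}.
\]

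Next I would invoke the scalar scaling identity \eqref{eqn-scaling} from Lemma~\ref{lem-2.2} for each of the scalar functions $u_1, u_2 \in L^2(Q_\eps)$, obtaining $\|\what{M}_\eps u_j\|^2_{L^2(Q_\eps)} = \eps \|M_\eps u_j\|^2_{L^2(Q)}$ for $j=1,2$. Substituting and factoring out $\eps$ gives
\[
\|\wtd{M}_\eps u\|^2_{\lL^2(Q_\eps)}
= \eps\left(\|M_\eps u_1\|^2_{L^2(Q)} + \|M_\eps u_2\|^2_{L^2(Q)}\right).
\]
Finally I would recognize the bracketed sum as the squared $\lL^2(Q)$ norm of $\ocirc{M}_\eps u = (M_\eps u_1, M_\eps u_2)$, by definition \eqref{eqn-M_eps-0}, which yields the claimed identity $\|\wtd{M}_\eps u\|^2_{\lL^2(Q_\eps)} = \eps\|\ocirc{M}_\eps u\|^2_{\lL^2(Q)}$.

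There is no genuine obstacle here: the argument is a direct componentwise application of the already-proved scalar case, and the only thing to keep straight is the bookkeeping between the three operators $M_\eps$, $\what{M}_\eps$ and $\ocirc{M}_\eps$. The one conceptual point worth flagging is that the vanishing third component of $\wtd{M}_\eps u$ is precisely what makes the left-hand norm collapse to a sum over only the two horizontal components, matching exactly the two components of $\ocirc{M}_\eps u$.
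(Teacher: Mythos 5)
Your proof is correct and follows essentially the same route as the paper: a componentwise expansion of the $\lL^2(Q_\eps)$ norm using the vanishing third component, followed by the scalar scaling in each of the two horizontal components. The only cosmetic difference is that you cite Lemma~\ref{lem-2.2} for the scalar scaling, whereas the paper simply rewrites the same Fubini computation inline.
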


\begin{proof}
Let $u \in \LQeps$, then by the definition of $\tMe$
\begin{align*}
    \|\tMe u\|^2_\LQeps & = \int_{Q_\eps}|\tMe u(\bx)|^2 d\bx  = \int_{Q_\eps}\left( |\what{M}_\eps u_1 (\bx)|^2 + |\what{M}_\eps u_2 (\bx)|^2\right) d\bx \\
    & = \inteps \int_Q \left(|M_\eps u_1(\bx')|^2 + |M_\eps u_2(\bx')|^2\right)d\bx' dx_3 \\
    & = \eps \int_Q |\oMe u (\bx')|^2d\bx' = \eps\|\oMe u\|^2_\LS.
\end{align*}
\end{proof}

\begin{remark}
\label{rem.circM-dual}
Similar to the scalar case, one can prove that the dual
operator $\ocirc{M}^*_\eps: \lL^2(Q) \rightarrow \lL^2(Q_\eps)$ is given by
\begin{equation}\label{circM-dual}
\ocirc{M}^*_{\eps}\rv (\bx',x_3) =  \big(\frac{1}{\eps} \rv_1(\bx'), \frac{1}{\eps} \rv_2(\bx'),0\big), \qquad (\bx',x_3) \in Q_\eps.
\end{equation}
Indeed, for $u \in \LQeps$ and $\rv \in \lL^2(Q)$,
\begin{align*}
(\ocirc{M}_\eps u, \rv)_{\lL^2(Q)} & = (M_\eps u_1, \rv_1)_{L^2(Q)} + (M_\eps u_2, \rv_2)_{L^2(Q)} \\
   & = (u_1, M^*_\eps \rv_1)_{L^2(Q_\eps)} + (u_2, M^*_\eps \rv_2)_{L^2(Q_\eps)} + (u_3,0).
\end{align*}
\end{remark}


\begin{lemma}\label{lemma:M N}
Let $\eps>0$ and $\oMe$ is the map as defined in \eqref{eqn-M_eps-0}. Then
\begin{enumerate}
\item[(i)]
if $u \in \rH_\eps$ then $\ocirc{M}_\eps u \in \rH$, i.e. $\ocirc{M}_\eps : \rH_\eps \to \rH$.
\item[(ii)]
if $u \in \rV_\eps$ then $\ocirc{M}_\eps u \in \rV$, i.e. $\ocirc{M}_\eps : \rV_\eps \rightarrow \rV$.
\end{enumerate}
\end{lemma}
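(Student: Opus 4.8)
The plan is to check, in each part, the three conditions that appear in the characterizations \eqref{eqn-H2} of $\rH$ and \eqref{eqn-V2} of $\rV$: the integrability/regularity of $\ocirc{M}_\eps u$, the divergence-free constraint, and the boundary condition. Each of these reduces to a short application of the averaging lemmas proved above, the only genuinely delicate point being the normal-trace condition in part (i).

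For part (i), square-integrability $\ocirc{M}_\eps u \in \lL^2(Q;\R^2)$ is immediate from Lemma~\ref{lemma2.2} applied to each component. For the divergence-free constraint I would use the identification $\tMe u(\bx',x_3) = (\ocirc{M}_\eps u(\bx'),0)$: by Lemma~\ref{lem.div-free}, $\ddiv \tMe u = 0$ in $Q_\eps$, and since the third component of $\tMe u$ vanishes while the first two are independent of $x_3$, this says precisely that $\ddiv'(\ocirc{M}_\eps u) = 0$ in $Q$.

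The heart of part (i) is the normal-trace condition $\ocirc{M}_\eps u \cdot \vec{n} = 0$ on $\partial Q$, which must be read weakly since $u \in \rH_\eps$ is only $\lL^2$. I would test against an arbitrary $\phi \in H^1(Q)$ and set $\Phi(\bx',x_3) := \phi(\bx')$, so that $\partial_3\Phi = 0$; using the dual operator of Remark~\ref{rem.M_eps_dual},
\[
\int_Q \ocirc{M}_\eps u \cdot \nabla'\phi \, d\bx' = \sum_{i=1}^2 \bigl(u_i, M_\eps^*\partial_i\phi\bigr)_{L^2(Q_\eps)} = \frac{1}{\eps}\int_{Q_\eps} u \cdot \nabla\Phi\, d\bx.
\]
Green's formula in $Q_\eps$ converts the right-hand side into a boundary pairing (the interior term drops out because $\ddiv u = 0$), and this pairing vanishes because $u\cdot\vec{n}_\eps = 0$ on $\Gamma_\ell^\eps$ while on $\Gamma_h^\eps$ one has $\vec{n}_\eps = (0,0,\pm 1)$ and $u_3 = 0$. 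As $\phi \in H^1(Q)$ was arbitrary, the Helmholtz--Leray orthogonality characterization of $\rH$ (orthogonality to every gradient $\nabla'\phi$, $\phi \in H^1(Q)$) gives $\ocirc{M}_\eps u \in \rH$; note this single identity also re-derives the divergence-free property upon restricting to $\phi \in C_0^\infty(Q)$.

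For part (ii) I would build on part (i). Since $\rV_\eps \subset \rH_\eps$, part (i) already yields $\ocirc{M}_\eps u \in \rH$, hence $\ddiv'(\ocirc{M}_\eps u) = 0$. The $\hH^1(Q)$-regularity follows from Lemma~\ref{lem.na_Mhat}, because $\partial_j \ocirc{M}_\eps u_i = M_\eps(\partial_j u_i)$ for $j=1,2$ and Lemma~\ref{lemma2.2} bounds each $\|M_\eps(\partial_j u_i)\|_{L^2(Q)}$ by $\eps^{-1/2}\|\partial_j u_i\|_{L^2(Q_\eps)} < \infty$. Finally, the full Dirichlet condition $\ocirc{M}_\eps u = 0$ on $\partial Q$ is exactly \eqref{eqn-int3Meps} of Lemma~\ref{lemm2_1}, whose proof uses only that $u = 0$ on $\Gamma_\ell^\eps$, which holds for $u \in \rV_\eps$. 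By \eqref{eqn-V2} these three facts give $\ocirc{M}_\eps u \in \rV$. The only real obstacle is the weak normal-trace argument of part (i); every other step is a direct invocation of the averaging lemmas.
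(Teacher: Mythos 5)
Your proof is correct, and for part (ii) and the divergence-free condition it coincides with the paper's argument (the paper likewise reduces everything to the characterizations \eqref{eqn-H2}, \eqref{eqn-V2}, uses the commutation $\partial_j \ocirc{M}_\eps u = \ocirc{M}_\eps \partial_j u$ for the $\hH^1$-regularity, and gets the Dirichlet condition from $u=0$ on $\Gamma_\ell^\eps$ exactly as in \eqref{eqn-int3Meps}). The one step where you genuinely diverge is the normal-trace condition in (i). The paper verifies it pointwise: for a.a. $\bx' \in \partial Q$ it writes $\ocirc{M}_\eps u \cdot \vec{n}(\bx') = \frac{1}{\eps}\int_0^\eps u(\bx',x_3)\cdot\vec{n}_\eps(\bx',x_3)\,dx_3 = 0$, using that $\vec{n}_\eps = (\vec{n},0)$ on $\Gamma_\ell^\eps$; this is quick but presumes the trace of $u$ on $\Gamma_\ell^\eps$ makes pointwise sense, which for $u \in \rH_\eps$ is only literal after a density argument (as in Lemma~\ref{lem.div-free}). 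You instead test against $\nabla'\phi$, $\phi \in H^1(Q)$, push the pairing to $Q_\eps$ via $M_\eps^*$, and invoke the orthogonality characterization $\rH^\perp = \{\nabla' \phi : \phi \in H^1(Q)\}$; this handles the weak normal trace of a mere $\lL^2$ divergence-free field without any density step and simultaneously recovers $\ddiv'\ocirc{M}_\eps u = 0$. The trade-off is that you rely on the Helmholtz--Leray orthogonality characterization of $\rH$ (standard, and consistent with the paper's citation of Temam for \eqref{eqn-H2}) rather than only on the geometric description of $\vec{n}_\eps$. Both routes are sound; yours is the more careful one at the low regularity of $\rH_\eps$.
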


\begin{proof}
\noindent \textbf{(i)} Let us take $u \in \rH_\eps$. Then $u\cdot \vec{n}_\eps = 0$ on $\partial Q_\eps$, where $\vec{n}_\eps(\bx',x_3) = (\vec{n}(\bx'),0)$ for $(\bx',y) \in \Gamma^\eps_\ell$ and
$\vec{n}_\eps = (0,0,\pm 1)$ on $\Gamma^{\eps,\pm}_h$. Put $\rv = \ocirc{M}_\eps u = (M_\eps u_1, M_\eps u_2)$. Thus, for almost all $x' \in \partial Q$,
\begin{align*}
\rv \cdot \vec{n}(\bx') & = M_\eps u_1(\bx'){n}_1(\bx') + M_\eps u_2(\bx') {n}_2(\bx') \\
& = \left(\frac{1}{\eps} \int_0^\eps u_1(\bx',x_3)\,dx_3\right) {n}_1(\bx') + \left(\frac{1}{\eps} \inteps u_2(\bx',x_3)\,dx_3\right) {n}_2(\bx')\\
& = \frac{1}{\eps} \int_0^\eps \left(u_1(\bx',x_3) {n}_1(\bx') + u_2(\bx',x_3) {n}_2(\bx') \right) dx_3\\
& = \frac{1}{\eps} \int_0^\eps u(x',x_3) \cdot \vec{n}_\eps(x',x_3) dx_3 = 0,
\end{align*}
where $\vec{n}$ is the unit external normal vector field to $\partial Q$.
The divergence free condition $\ddiv' \rv=0$ can be shown analogously to the proof of Lemma~\ref{lem.div-free}. Thus, we have shown that $\oMe u \in \rH$.

\noindent \textbf{(ii)} Let us begin with $\ocirc{M}_\eps : \lL^2(Q_\eps) \rightarrow \lL^2(Q)$ and put
\[
\rv = \ocirc{M}_\eps u = \left(M_\eps u_1, M_\eps u_2 \right).
\]
It is obvious that if $\partial_j u_k \in L^2(Q_\eps)$ for $k=1,2,3$, $j=1,2$ then $M_\eps\left(\partial_j u_k\right) \in L^2(Q)$
and $\partial_j \rv = \ocirc{M}_\eps (\partial_j u)$ a.e. Now let  $u \in \rV_\eps$. Then from the above
reason $\rv =  \ocirc{M}_\eps u \in \hH^1(Q)$. Moreover, the proof of part (i) implies that
$\ddiv' \rv = 0$. Finally, we have $u_j(\bx',x_3) = 0$ for $(\bx',x_3) \in \partial Q \times (0,\eps)$, $j = 1, 2$, therefore for $\bx' \in \partial Q$
\[
  \rv_j(\bx') = M_\eps u_j(\bx') = \frac{1}{\eps} \int_0^\eps u_j (\bx',x_3)\, dx_3 = 0\qquad  j=1, 2.
\]
Hence, we have proved that $\rv \in \rV$.
\end{proof}

\begin{remark}\label{rem:dualMo}
We proved that $\ocirc{M}_\eps (\rH_\eps) \subset \rH$.
By \eqref{circM-dual}, for $\rv \in \rH$, and $u = \ocirc{M}_\eps^* \rv$, $\ddiv u = 0$.
Moreover, $u_3 = 0$ on $\Gamma^\eps_h$ and
\[
 u \cdot \vec{n}_\eps(\bx',x_3) = \frac{1}{\eps} \rv \cdot \vec{n}(\bx') =  0, \quad \mbox{for }\, (\bx', x_3) \in \Gamma^\eps_\ell.
\]
Thus, we infer $u \in \rH_\eps$ which implies $\ocirc{M}^*_\eps(\rH) \subset \rH_\eps$.
\end{remark}

Let us now recall a classical Poincar\'{e}'s inequality, see e.g.
\cite[Proposition 2.1]{[TZ96]}
\begin{proposition}[Poincar\'e inequality]
\label{prop-Poincare}
If $\psi \in H^1(Q_\eps)$ and either of the following conditions are satisfied
\[
\begin{cases}
\psi = 0\; &\mbox{ on }\, \Gamma^{\eps,+}_h, \\
\psi = 0\; &\mbox{ on }\,  \Gamma^{\eps,-}_h,\\
 M_\eps \psi = 0\; &\mbox{ a.e. on }\, Q.
\end{cases}
\]
Then
\begin{equation}\label{eqn-Poincare}
 \|\psi\|_{L^2(Q_\eps)} \le \eps \|\partial_3 \psi\|_{L^2(Q_\eps)}.
\end{equation}
\end{proposition}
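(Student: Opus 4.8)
The plan is to reduce the three-dimensional inequality to a one-dimensional estimate on each vertical fibre $(0,\eps)$ and then integrate over $\bx' \in Q$ by Fubini. The technical device that makes the fibrewise reduction rigorous is the characterisation of Sobolev functions as absolutely continuous on almost every line: for $\psi \in H^1(Q_\eps)$ and a.e.\ $\bx' \in Q$ the function $f(x_3) := \psi(\bx',x_3)$ is absolutely continuous on $[0,\eps]$ with $f'(x_3) = \partial_3\psi(\bx',x_3) \in L^2(0,\eps)$. Under this identification each of the three hypotheses becomes a fibrewise condition holding for a.e.\ $\bx'$: namely $f(0)=0$ (vanishing on $\Gamma^{\eps,-}_h$), $f(\eps)=0$ (vanishing on $\Gamma^{\eps,+}_h$), or $\inteps f(t)\,dt = 0$ (the condition $M_\eps\psi = 0$). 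Alternatively one may first prove the estimate for $\psi$ smooth and pass to the limit, since both sides are continuous with respect to the $H^1(Q_\eps)$ norm.

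First I would dispose of the two boundary cases. If $f(0)=0$, the fundamental theorem of calculus gives $f(x_3) = \int_0^{x_3} f'(s)\,ds$, whence by the Cauchy--Schwarz inequality $|f(x_3)|^2 \le x_3\int_0^{x_3}|f'(s)|^2\,ds \le \eps\inteps |f'(s)|^2\,ds$. Integrating in $x_3$ over $(0,\eps)$ yields $\inteps |f(x_3)|^2\,dx_3 \le \eps^2\inteps |f'(s)|^2\,ds$. The case $f(\eps)=0$ is identical upon writing $f(x_3) = -\int_{x_3}^\eps f'(s)\,ds$.

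For the mean-zero case $\inteps f(t)\,dt = 0$, I would use the Poincar\'e--Wirtinger representation
\begin{equation*}
f(x_3) = f(x_3) - \frac{1}{\eps}\inteps f(t)\,dt = \frac{1}{\eps}\inteps\Bigl(\int_t^{x_3} f'(s)\,ds\Bigr)\,dt,
\end{equation*}
which gives $|f(x_3)| \le \inteps |f'(s)|\,ds$ and hence, by Cauchy--Schwarz, $|f(x_3)|^2 \le \eps\inteps |f'(s)|^2\,ds$; integration in $x_3$ again produces the bound with constant $\eps^2$. Equivalently, the vanishing average forces $f(x_3^\ast)=0$ for some $x_3^\ast \in [0,\eps]$, and one integrates from $x_3^\ast$ exactly as in the boundary cases. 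In each of the three cases we have thus established, fibre by fibre,
\begin{equation*}
\inteps |\psi(\bx',x_3)|^2\,dx_3 \le \eps^2\inteps |\partial_3\psi(\bx',x_3)|^2\,dx_3.
\end{equation*}

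To conclude I would integrate this inequality over $\bx' \in Q$ and apply Fubini's theorem, obtaining $\|\psi\|_{L^2(Q_\eps)}^2 \le \eps^2\|\partial_3\psi\|_{L^2(Q_\eps)}^2$, and then take square roots to arrive at \eqref{eqn-Poincare}. The main obstacle is the mean-zero case: the two boundary cases are a one-line application of the fundamental theorem of calculus, whereas the condition $M_\eps\psi=0$ singles out no point at which $f$ is a priori known to vanish and must instead be exploited through the Poincar\'e--Wirtinger representation (or by locating a fibrewise zero). It is worth noting that the constant $\eps$ is not sharp in this case -- the optimal Wirtinger constant $\eps/\pi$ would also do -- but $\eps$ suffices uniformly across all three hypotheses.
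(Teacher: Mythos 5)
Your proof is correct. The paper does not actually prove this proposition: it is recalled as a classical result with a citation to Temam and Ziane \cite[Proposition 2.1]{[TZ96]}, so there is no in-paper argument to compare against. Your fibrewise reduction via the absolutely-continuous-on-lines characterisation, the fundamental theorem of calculus for the two boundary cases, and the Poincar\'e--Wirtinger representation for the mean-zero case is exactly the standard argument behind the cited result, and all three cases, the Cauchy--Schwarz steps, and the final Fubini integration check out with the stated constant $\eps$.
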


We have the following immediate consequence.
\begin{corollary}\label{cor-Poincare}
If $u \in \rV_\eps$, then
\begin{equation}\label{NuV}
\|\wtd{N}_\eps u\|_{\lL^2(Q_\eps)} \le \eps \|\partial_3 \wtd{N}_\eps u\|_{\lL^2(Q_\eps)}.
\end{equation}
\end{corollary}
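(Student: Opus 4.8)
The corollary is an immediate specialization of the Poincaré inequality in Proposition~\ref{prop-Poincare}, applied componentwise to the vector field $\wtd{N}_\eps u$. The plan is to verify that each scalar component of $\wtd{N}_\eps u$ satisfies one of the hypotheses of the proposition, apply the inequality to each component, and sum the resulting estimates.

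First I would write out the components of $\wtd{N}_\eps u$ explicitly. Recall from the proof of Lemma~\ref{lemm2_1} that for $u = (u_1, u_2, u_3)$ one has $\wtd{N}_\eps u = (\what{N}_\eps u_1, \what{N}_\eps u_2, u_3)$, since $\wtd{M}_\eps u = (\what{M}_\eps u_1, \what{M}_\eps u_2, 0)$ by definition \eqref{eqn-M_eps-tilde}. For the first two components, Lemma~\ref{lemma_avgN} gives $\int_0^\eps \what{N}_\eps u_j(\bx',x_3)\,dx_3 = 0$ for a.a.\ $\bx' \in Q$, which is exactly the condition $M_\eps(\what{N}_\eps u_j) = 0$ a.e.\ on $Q$; hence the third case of Proposition~\ref{prop-Poincare} applies and yields
\[
\|\what{N}_\eps u_j\|_{L^2(Q_\eps)} \le \eps\,\|\partial_3 \what{N}_\eps u_j\|_{L^2(Q_\eps)}, \qquad j = 1, 2.
\]
For the third component, since $u \in \rV_\eps$ we have $u_3 = 0$ on $\Gamma^\eps_h$ (in particular on $\Gamma^{\eps,+}_h$), by the characterization in \eqref{eqn-Aeps2} and the boundary conditions defining $\rV_\eps$; so the first case of the proposition applies and gives $\|u_3\|_{L^2(Q_\eps)} \le \eps\,\|\partial_3 u_3\|_{L^2(Q_\eps)}$.

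Finally I would square all three inequalities, add them, and use that $\|\wtd{N}_\eps u\|^2_{\lL^2(Q_\eps)} = \sum_{j=1}^2 \|\what{N}_\eps u_j\|^2_{L^2(Q_\eps)} + \|u_3\|^2_{L^2(Q_\eps)}$ together with the analogous identity for $\partial_3 \wtd{N}_\eps u$ to obtain
\[
\|\wtd{N}_\eps u\|^2_{\lL^2(Q_\eps)} \le \eps^2\,\|\partial_3 \wtd{N}_\eps u\|^2_{\lL^2(Q_\eps)},
\]
and take square roots. There is no real obstacle here; the only point requiring a moment's care is confirming that each scalar component of $\wtd{N}_\eps u$ lies in $H^1(Q_\eps)$ so that Proposition~\ref{prop-Poincare} is applicable — this follows for the first two components from Lemma~\ref{lem.na_Mhat} (which gives $\what{N}_\eps \psi \in H^1(Q_\eps)$ whenever $\psi \in H^1(Q_\eps)$) and for the third from $u \in \rV_\eps \subset \hH^1(Q_\eps)$.
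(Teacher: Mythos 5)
Your proof is correct and follows essentially the same route as the paper: decompose $\wtd{N}_\eps u = (\what{N}_\eps u_1, \what{N}_\eps u_2, u_3)$, apply the third case of Proposition~\ref{prop-Poincare} to the first two components (via the vanishing vertical average, equivalently $\what{M}_\eps\what{N}_\eps u_j = 0$) and the first case to $u_3$, then combine componentwise. The only tiny quibble is that $u_3 = 0$ on $\Gamma^\eps_h$ is most directly justified by the characterization \eqref{eqn-H-u_3} of $\rH_\eps$ (which contains $\rV_\eps$) rather than by \eqref{eqn-Aeps2}.
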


\begin{proof}
Note that for $u \in \rV_\eps$, $\wtd{N}_\eps u = (\what{N}_\eps u_1, \what{N}_\eps u_2, u_3)$. Thus, it is sufficient to prove an analogous inequality for $\what{N}_\eps$ and $\psi \in H^1(Q_\eps)$ as we can use Proposition~\ref{prop-Poincare} directly for $u_3$ (since for $u \in \rV_\eps$, $u_3 = 0$ on $\Gamma_h^\eps$). For such $\psi$ we have, by \eqref{eqn-Mhat-Nhat-perp}, $\what{M}_\eps \what{N}_\eps \psi = 0$ a.e. on $Q$ and by Lemma~\ref{lem.na_Mhat} $\what{N}_\eps \psi \in H^1(Q_\eps)$. Hence, by Proposition~\ref{prop-Poincare} the inequality holds true for the map $\what{N}_\eps$ and $\psi \in H^1(Q_\eps)$. Therefore, we have shown that \eqref{eqn-Poincare} holds true for every component of $\tNe u$ and in particular for $\tNe u$.
\end{proof}

We also have the following anisotropic Ladyzhenskaya's inequality (see \cite[Remark~2.1]{[TZ96]}).
\begin{proposition}\label{prop-Lady-anisotropic}
Let $\eps > 0$ and $u \in \hH^1(Q_\eps)$. Then, $\exists$ $c_0 = c_0(Q) >0$ independent of $\eps$ such that
\begin{equation}\label{eqn-lady-anisotropic}
\|u\|_{\lL^6(Q_\eps)} \le c_0
\left( \frac{1}{\eps} \|u\|_{\lL^2(Q_\eps)}
   +\|\partial_3 u\|_{\lL^2(Q_\eps)} \right)^{1/3}
   (\|u\|_{\lL^2(Q_\eps)}+ \|\partial_1 u\|_{\lL^2(Q_\eps)} +
   \|\partial_2 u\|_{\lL^2(Q_\eps)} )^{2/3}.
\end{equation}
\end{proposition}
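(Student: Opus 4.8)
The plan is to strip out the $\eps$-dependence by an anisotropic rescaling that flattens $Q_\eps$ onto the fixed reference domain $Q_1 := Q \times (0,1)$, thereby reducing \eqref{eqn-lady-anisotropic} to a single $\eps$-independent inequality on $Q_1$ whose constant depends only on $Q$. Concretely, given $u \in \hH^1(Q_\eps)$ I set $v(\bx',y_3) := u(\bx', \eps y_3)$ for $(\bx', y_3) \in Q_1$. The change of variables $(\bx', y_3) \mapsto (\bx', \eps y_3)$ has Jacobian $\eps$, so $\|u\|_{\lL^p(Q_\eps)} = \eps^{1/p}\|v\|_{\lL^p(Q_1)}$ for every $p$, while the chain rule $\partial_{y_3} v = \eps\,(\partial_3 u)(\bx',\eps y_3)$ and $\partial_{y_j} v = (\partial_j u)(\bx', \eps y_3)$ for $j=1,2$ gives
\[
\|\partial_j u\|_{\lL^2(Q_\eps)} = \eps^{1/2}\,\|\partial_{y_j} v\|_{\lL^2(Q_1)}\ (j=1,2),
\qquad
\|\partial_3 u\|_{\lL^2(Q_\eps)} = \eps^{-1/2}\,\|\partial_{y_3} v\|_{\lL^2(Q_1)}.
\]

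The reduction then rests on the claim that it suffices to prove, with $C=C(Q)$ and for all $v \in \hH^1(Q_1)$, the fixed-domain anisotropic inequality
\[
\|v\|_{\lL^6(Q_1)} \le C\left(\|v\|_{\lL^2(Q_1)} + \|\partial_{y_3}v\|_{\lL^2(Q_1)}\right)^{1/3}\left(\|v\|_{\lL^2(Q_1)} + \|\partial_{y_1}v\|_{\lL^2(Q_1)} + \|\partial_{y_2}v\|_{\lL^2(Q_1)}\right)^{2/3}.
\]
Indeed, substituting the scaling relations the left side becomes $\eps^{-1/6}\|u\|_{\lL^6(Q_\eps)}$. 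In the first factor one has $\|v\|_{\lL^2(Q_1)} + \|\partial_{y_3}v\|_{\lL^2(Q_1)} = \eps^{1/2}\big(\tfrac1\eps\|u\|_{\lL^2(Q_\eps)} + \|\partial_3 u\|_{\lL^2(Q_\eps)}\big)$, contributing $\eps^{1/6}$ after taking the power $1/3$; the second factor equals $\eps^{-1/2}\big(\|u\|_{\lL^2(Q_\eps)} + \|\partial_1 u\|_{\lL^2(Q_\eps)} + \|\partial_2 u\|_{\lL^2(Q_\eps)}\big)$, contributing $\eps^{-1/3}$ after the power $2/3$. The powers of $\eps$ combine to $\eps^{-1/6}$ on each side and cancel, yielding \eqref{eqn-lady-anisotropic} with $c_0=C$ genuinely independent of $\eps$. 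Thus the entire $\eps$-content of the proposition is this bookkeeping, and the remaining inequality lives on a fixed domain.

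To establish the fixed-domain inequality I would start from the anisotropic Sobolev (Gagliardo--Nirenberg/Troisi) inequality on the whole space, applied to each Cartesian component $w \in \hH^1(\R^3)$,
\[
\|w\|_{\lL^6(\R^3)} \le c_1 \prod_{i=1}^3 \|\partial_i w\|_{\lL^2(\R^3)}^{1/3},
\]
which has exactly the correct scaling and follows, e.g., by applying the $L^1$-Gagliardo--Nirenberg--Sobolev inequality to $|w|^4$. I would then extend $v$ to $\calE v \in \hH^1(\R^3)$ by an operator adapted to the product structure of $Q_1 = Q\times(0,1)$: even reflection in $y_3$ across the flat faces $\{0\}$ and $\{1\}$, followed by a Stein extension across the smooth boundary $\partial Q$ acting only on the $\bx'$ variables, and a smooth cutoff to restore compact support. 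Because the horizontal extension acts slicewise and the reflection respects the vertical direction, one obtains $\|\partial_{y_3}\calE v\|_{\lL^2(\R^3)} \le c_2\big(\|v\|_{\lL^2(Q_1)} + \|\partial_{y_3}v\|_{\lL^2(Q_1)}\big)$ and $\|\partial_{y_j}\calE v\|_{\lL^2(\R^3)} \le c_2\big(\|v\|_{\lL^2(Q_1)} + \|\partial_{y_1}v\|_{\lL^2(Q_1)} + \|\partial_{y_2}v\|_{\lL^2(Q_1)}\big)$ for $j=1,2$; inserting these three bounds into the whole-space inequality and using $\|v\|_{\lL^6(Q_1)} \le \|\calE v\|_{\lL^6(\R^3)}$ produces the claimed product of a $\partial_{y_3}$-factor to the power $1/3$ and a horizontal factor to the power $2/3$. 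The main obstacle, and the only place requiring genuine care, is the construction of this extension operator so that the three single-direction derivative norms are controlled \emph{without cross-contamination} between the vertical and horizontal directions (the lower-order $\|v\|_{\lL^2(Q_1)}$ terms are the unavoidable residue of the boundary extension and cutoff); its constant depends only on $Q$, which is what makes $c_0$ in \eqref{eqn-lady-anisotropic} uniform in $\eps$.
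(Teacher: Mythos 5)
Your proposal is correct, but note that the paper does not actually prove Proposition~\ref{prop-Lady-anisotropic}: it is quoted verbatim from Temam--Ziane \cite[Remark~2.1]{[TZ96]}, so what you have supplied is a self-contained proof of a cited result rather than an alternative to an argument in the text. Your $\eps$-bookkeeping is right: with $v(\bx',y_3)=u(\bx',\eps y_3)$ one indeed gets $\|u\|_{\lL^p(Q_\eps)}=\eps^{1/p}\|v\|_{\lL^p(Q_1)}$, $\|\partial_j u\|_{\lL^2(Q_\eps)}=\eps^{1/2}\|\partial_{y_j}v\|_{\lL^2(Q_1)}$ for $j=1,2$ and $\|\partial_3 u\|_{\lL^2(Q_\eps)}=\eps^{-1/2}\|\partial_{y_3}v\|_{\lL^2(Q_1)}$, and the powers $\eps^{-1/6}$ cancel exactly as you claim, so the proposition does reduce to the fixed-domain inequality on $Q_1=Q\times(0,1)$ with a constant depending only on $Q$. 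The fixed-domain step is also sound: the whole-space anisotropic inequality $\|w\|_{\lL^6(\R^3)}\le c\prod_{i=1}^3\|\partial_i w\|_{\lL^2(\R^3)}^{1/3}$ (Ladyzhenskaya/Troisi) follows as you indicate from the $L^{3/2}$--$L^1$ Loomis--Whitney-type estimate applied to $|w|^4$ together with H\"older, and the extension with \emph{separated} derivative control is constructible exactly as you describe: even reflection and a cutoff in $y_3$ only mix $\|v\|_{\lL^2}$ into the vertical factor, while a two-dimensional Stein extension applied slicewise in $\bx'$ is bounded on $L^2(Q)$ and on $H^1(Q)$ and commutes with $\partial_{y_3}$, so the vertical derivative of the extension is controlled by $\|v\|_{\lL^2}+\|\partial_{y_3}v\|_{\lL^2}$ and the horizontal derivatives by $\|v\|_{\lL^2}+\|\partial_{y_1}v\|_{\lL^2}+\|\partial_{y_2}v\|_{\lL^2}$, with all constants depending only on $Q$. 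Two cosmetic points: the inequality should be applied to each scalar component (your $w$ lives in $H^1(\R^3)$, not $\hH^1(\R^3)$), and the application of the whole-space inequality to $\calE v$ requires the usual density/truncation remark since one needs $\|w\|_{\lL^6}<\infty$ a priori in the $|w|^4$ argument; neither affects the validity of the proof.
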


From Corollary~\ref{cor-Poincare} and Proposition~\ref{prop-Lady-anisotropic}
we deduce the following result.
\begin{corollary}\label{cor-Lady-anisotropic}
Let $\eps >0$ and $u \in \rV_\eps$. Then, $\exists$ $c_0 > 0$ independent of $\eps$ such that
\[
 \|\wtd{N}_\eps u\|_{\lL^6(Q_\eps)} \le c_0 \| \wtd{N}_\eps u\|_{\rV_\eps},
\]
where
\[
 \|u\|^2_{\rV_\eps} = \|u\|^2_{\lL^2(Q_\eps)} + \| \nabla u \|^2_{\lL^2(Q_\eps)}.
\]
\end{corollary}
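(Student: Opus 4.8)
The plan is to combine the anisotropic Ladyzhenskaya inequality from Proposition~\ref{prop-Lady-anisotropic}, applied to the particular function $\wtd{N}_\eps u$, with the Poincar\'e-type estimate for $\wtd{N}_\eps u$ furnished by Corollary~\ref{cor-Poincare}. First I would write out \eqref{eqn-lady-anisotropic} with $u$ replaced by $\wtd{N}_\eps u$, which is legitimate because Lemma~\ref{lem.avg_op_vec_prop}(ii) guarantees $\wtd{N}_\eps u \in \hH^1(Q_\eps)$ whenever $u \in \rV_\eps \subset \hH^1(Q_\eps)$. This gives
\[
\|\wtd{N}_\eps u\|_{\lL^6(Q_\eps)} \le c_0
\Big(\tfrac{1}{\eps}\|\wtd{N}_\eps u\|_{\lL^2(Q_\eps)} + \|\partial_3 \wtd{N}_\eps u\|_{\lL^2(Q_\eps)}\Big)^{1/3}
\Big(\|\wtd{N}_\eps u\|_{\lL^2(Q_\eps)} + \|\partial_1 \wtd{N}_\eps u\|_{\lL^2(Q_\eps)} + \|\partial_2 \wtd{N}_\eps u\|_{\lL^2(Q_\eps)}\Big)^{2/3}.
\]

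The key observation is that the first factor, which is the one carrying the dangerous $1/\eps$ prefactor, can be tamed by Corollary~\ref{cor-Poincare}: since $\|\wtd{N}_\eps u\|_{\lL^2(Q_\eps)} \le \eps\|\partial_3 \wtd{N}_\eps u\|_{\lL^2(Q_\eps)}$, we get $\tfrac{1}{\eps}\|\wtd{N}_\eps u\|_{\lL^2(Q_\eps)} \le \|\partial_3 \wtd{N}_\eps u\|_{\lL^2(Q_\eps)}$, so the whole first factor is bounded by $2\|\partial_3 \wtd{N}_\eps u\|_{\lL^2(Q_\eps)}$. This is precisely the point of the corollary: the seemingly $\eps$-singular Ladyzhenskaya constant collapses to an $\eps$-independent one exactly because $\wtd{N}_\eps u$ has vanishing vertical average (and $u_3$ vanishes on $\Gamma_h^\eps$), so the Poincar\'e inequality is available. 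For the second factor I would simply bound each term by the full gradient and the $\lL^2$ norm, noting $\|\partial_3 \wtd{N}_\eps u\|_{\lL^2(Q_\eps)} \le \|\nabla \wtd{N}_\eps u\|_{\lL^2(Q_\eps)}$ as well, so that both factors are controlled by $\|\wtd{N}_\eps u\|_{\rV_\eps} = \big(\|\wtd{N}_\eps u\|_{\lL^2(Q_\eps)}^2 + \|\nabla \wtd{N}_\eps u\|_{\lL^2(Q_\eps)}^2\big)^{1/2}$.

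Multiplying the two estimates, the exponents $1/3$ and $2/3$ add to $1$, yielding $\|\wtd{N}_\eps u\|_{\lL^6(Q_\eps)} \le c_0' \|\wtd{N}_\eps u\|_{\rV_\eps}$ with a constant $c_0'$ absorbing the numerical factor $2^{1/3}$ and the original $c_0$; crucially $c_0'$ remains independent of $\eps$. I do not anticipate a serious obstacle here, as the argument is a direct chaining of two cited results; the only point requiring mild care is the application of Corollary~\ref{cor-Poincare} componentwise, which is already handled inside that corollary's proof (it treats $\what{N}_\eps u_1$, $\what{N}_\eps u_2$ via \eqref{eqn-Mhat-Nhat-perp} and $u_3$ via the boundary condition $u_3 = 0$ on $\Gamma_h^\eps$), so that one may indeed use the estimate \eqref{NuV} for the vector field $\wtd{N}_\eps u$ as a whole. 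The main thing to verify explicitly is simply that $\partial_3 \wtd{N}_\eps u$ appears with the right sign and that no $\eps$-dependence sneaks back in through the second factor, which it does not since that factor involves no reciprocal power of $\eps$.
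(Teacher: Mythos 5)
Your proposal is correct and follows essentially the same route as the paper's own proof: apply Proposition~\ref{prop-Lady-anisotropic} to $\wtd{N}_\eps u$, use Corollary~\ref{cor-Poincare} to absorb the $\frac{1}{\eps}\|\wtd{N}_\eps u\|_{\lL^2(Q_\eps)}$ term into $\|\partial_3 \wtd{N}_\eps u\|_{\lL^2(Q_\eps)}$, and bound both factors by $\|\wtd{N}_\eps u\|_{\rV_\eps}$. Your additional remarks on the $H^1$ membership of $\wtd{N}_\eps u$ and the componentwise validity of the Poincar\'e estimate are sound and consistent with what the paper leaves implicit.
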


\begin{proof}
Let $\eps > 0$ and $u \in \rV_\eps$. Then by Proposition~\ref{prop-Lady-anisotropic} there exists a constant $c_0$ independent of $\eps$ such that
\begin{align}
\label{eqn-cor-Lady1}
    \|\tNe u\|_{\lL^6(Q_\eps)} & \le c_0
\left( \frac{1}{\eps} \|\tNe u\|_{\lL^2(Q_\eps)}
   +\|\partial_3 \tNe u\|_{\lL^2(Q_\eps)} \right)^{1/3}
   \left(\|\tNe u\|_{\lL^2(Q_\eps)}+ \|\nabla'\tNe u\|_{\lL^2(Q_\eps)} \right)^{2/3}.
\end{align}
Now from Corollary~\ref{cor-Poincare} for $u \in \rV_\eps$
\[\frac{1}{\eps} \|\tNe u\|_{\LQeps} \le \|\partial_3 \tNe u\|_\LQeps.\]
Using the above inequality in \eqref{eqn-cor-Lady1} and the definition of $\rV_\eps$-norm we obtain the required inequality.
\end{proof}

From Corollaries~\ref{cor-Poincare}, \ref{cor-Lady-anisotropic} and interpolation argument (between $\lL^2(Q_\eps)$ and $\lL^6(Q_\eps)$)\footnote{Let $u \in \lL^6(Q_\eps)$ then there exists a constant $c > 0$ such that for $2 \le q \le 6$, $\|u\|_{\lL^q}^2 \le c \|u\|^{\frac{6-q}{q}}_{\lL^2}\|u\|^{\frac{3(q-2)}{q}}_{\lL^6}$.},
we obtain
\begin{equation}\label{eqn-NepsL3}
\|\wtd{N}_\eps u\|^2_{L^3(Q_\eps)} \le c_0 \eps \| \wtd{N}_\eps u\|^2_{\rV_\eps}.
\end{equation}

We have defined a map $\ocirc{M}_\eps$ (see \eqref{eqn-M_eps-0}) from the set of (square integrable) $\R^3$-valued vector fields on $Q_\eps$ to the set of (square integrable) $\R^2$-valued vector fields on $Q$. Later in the analysis we will require a retract of $\ocirc{M}_\eps$, i.e. a map $R_\eps : \lL^2(Q) \rightarrow \lL^2(Q_\eps)$
such that
\begin{equation} \label{eqn-4_17}
    \ocirc{M}_\eps \circ R_\eps = \text{Id on }  \lL^2(Q).
\end{equation}

A natural candidate for $R_\eps$ is
\begin{equation}\label{eqn-4_18}
R_\eps u'(\bx', x_3) = \left( u_1'(\bx'), u_2'(\bx'), 0\right),
\end{equation}
where $\lL^2(Q, \R^2) \ni u' = \left(u_1', u_2'\right)$. Note that $R_\eps$ is a bounded linear map from $\lL^2(Q)$ to
$\lL^2(Q_\eps)$ and $\ocirc{M}_\eps \circ R_\eps =$ Id on $ \lL^2(Q)$.
Moreover, if $\ddiv' u' = 0$ then
\[
(\ddiv R_\eps u')(\bx',x_3) = \partial_1 u_1'(\bx') + \partial_2 u_2'(\bx') + \partial_3 0
 = \ddiv' u' = 0.
\]
Moreover, if $u' \cdot \vec{n}|_{\partial Q} = 0$ then it can be verified that $R_\eps u' \cdot \vec{n}_\eps = 0$ on $\partial Q_\eps$. Hence, for $u' \in \rH$, $R_\eps u' \in \rH_\eps$. Furthermore, if $u' \in \mathbb{H}^1(Q)$ then $R_\eps u' \in \mathbb{H}^1(Q_\eps)$ and similarly to the above argument we can show that  $u' \in \rV$ implies $R_\eps u' \in \rV_\eps$.

Indeed, if $u' = 0$ on $\partial Q$ then $R_\eps u' = 0$ on $\Gamma^\eps_{\ell}$ and
$(R_\eps u')_3 = 0$ on $\Gamma_h^\eps$. Finally, if $u' \in D(\rA)$ then
$R_\eps u' \in D(A_\eps)$. Indeed, if $u \in \mathbb{H}^2(Q)$ then $R_\eps u \in \mathbb{H}^2(Q_\eps)$
and for $j=1,2$, $(R_\eps u')_j$ is constant
with respect to $x_3$ and so $\partial_3( (R_\eps u')_j ) = 0$ also on $\Gamma^\eps_h$.

Also, note that $R_\eps$ commutes with $\nabla'$, i.e. for $u' \in \rV$, $\rv \in \rV_\eps$,
\begin{equation}\label{eqn:Rdelcommute}
\begin{aligned}
  \partial_j R_\eps u' &= R_\eps \partial_j u', \quad j=1,2,  \\
  \ocirc{M}_\eps \partial_j \rv &=  \partial_j \ocirc{M}_\eps \rv, \quad j=1,2.
\end{aligned}
\end{equation}
Additionally, for $\rv \in \lL^2(Q_\eps)$ and $u' \in \lL^2(Q)$
\begin{equation}\label{eqn-4_19}
\left(R_\eps u', \rv\right)_{\lL^2(Q_\eps)} = \eps \left(u', \ocirc{M}_\eps \rv\right)_{\lL^2(Q)},
\end{equation}
showing that the adjoint $R_\eps^\ast : \lL^2(Q_\eps) \to \lL^2(Q)$ of $R_\eps$ is given by $R_\eps^\ast \rv = \eps \ocirc{M}_\eps \rv$.

\begin{lemma}
\label{lemma2.22}
Let $\eps > 0$ and $\rv \in \lL^2(Q)$. Then
\begin{equation}
\label{eq:2.37}
\|R_\eps \rv\|_{\LQeps}^2 = \eps \|\rv\|_{\LS}^2\,, \quad \rv \in \LS\;.
\end{equation}
\end{lemma}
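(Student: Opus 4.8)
The plan is to prove the scaling identity \eqref{eq:2.37} by direct computation, exactly mirroring the scalar scaling result in Lemma~\ref{lem-2.2} and the vector-valued version in Lemma~\ref{lem.vec_scaling}. The statement is essentially immediate once one unpacks the definition of the retract $R_\eps$ in \eqref{eqn-4_18}, because $R_\eps \rv$ is constant in the vertical variable $x_3$, so integrating over the slab $(0,\eps)$ simply multiplies the two-dimensional integral by the factor $\eps$.

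More precisely, I would start from the left-hand side, write out the $\lL^2(Q_\eps)$ norm as an integral over $Q_\eps = Q \times (0,\eps)$, and invoke the Fubini theorem to split the integral into the planar integral over $Q$ and the vertical integral over $(0,\eps)$. Using the explicit formula $R_\eps \rv(\bx', x_3) = (\rv_1(\bx'), \rv_2(\bx'), 0)$, the integrand $|R_\eps \rv(\bx', x_3)|^2 = |\rv_1(\bx')|^2 + |\rv_2(\bx')|^2 = |\rv(\bx')|^2$ is independent of $x_3$. Hence the $x_3$-integration contributes only the length $\eps$ of the interval, and what remains is precisely $\eps \|\rv\|_{\lL^2(Q)}^2$. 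Concretely, the chain of equalities reads
\[
\|R_\eps \rv\|^2_{\LQeps}
= \int_{Q}\!\int_0^\eps \big(|\rv_1(\bx')|^2 + |\rv_2(\bx')|^2\big)\,dx_3\,d\bx'
= \eps \int_Q |\rv(\bx')|^2\,d\bx'
= \eps\|\rv\|^2_{\LS}.
\]

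There is no genuine obstacle here; the result is a one-line Fubini computation and the only thing to be careful about is that the third component of $R_\eps \rv$ vanishes, so it does not contribute to the norm. It is worth remarking that this identity is also consistent with the adjoint relation \eqref{eqn-4_19}: since $\ocirc{M}_\eps \circ R_\eps = \mathrm{Id}$ on $\lL^2(Q)$ by \eqref{eqn-4_17}, one has $\|R_\eps \rv\|^2_{\LQeps} = (R_\eps^\ast R_\eps \rv, \rv)_{\LS} = \eps(\ocirc{M}_\eps R_\eps \rv, \rv)_{\LS} = \eps\|\rv\|^2_{\LS}$, which provides an alternative structural derivation. Either route closes the proof.
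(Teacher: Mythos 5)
Your proof is correct and coincides with the paper's own argument: both simply expand the $\LQeps$-norm over $Q\times(0,\eps)$, use that $R_\eps\rv$ is independent of $x_3$ with vanishing third component, and extract the factor $\eps$ from the vertical integral. The additional remark via the adjoint relation \eqref{eqn-4_19} is a nice consistency check but not needed.
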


\begin{proof}
Let $\eps > 0$ and consider $\LS \ni \rv = \left(\rv_1,\rv_2\right)$. Then, by the definition of the retract operator $R_\eps$ and $\LQeps$-norm we have
\begin{align*}
\|R_\eps \rv\|^2_{\LQeps}& = \int_{Q}\int_0^\eps\left|R_\eps \rv(\bx',x_3)\right|^2 d\bx'dx_3 = \int_{Q}\inteps \left[|\rv_1(x')|^2+|\rv_2(x')|^2 \right]d\bx'dx_3 \\
& = \eps \|\rv\|^2_{\LS}\;.
\end{align*}
\end{proof}

Using the definition of the map $R_\eps$ and its properties the above lemma can be generalised to Sobolev spaces.

\begin{lemma}
\label{lem:Reps_Sob}
Let $\eps >0$, $p \ge 2$ and $R_\eps$ be the map as given by \eqref{eqn-4_17}--\eqref{eqn-4_18}. Then
\begin{equation}
\label{eq:Reps_Sob}
    \begin{split}
        \|\nabla R_\eps \rv\|_{\lL^p(Q_\eps)} = \eps^{1/p}\|\nablaS \rv\|_{\lL^p(Q)}, \qquad \rv \in \mathbb{W}^{1,p}(Q), \\
        \|\Delta R_\eps \rv\|_{\LQeps} = \eps^{1/2}\|\Delta' \rv\|_\LS, \qquad \rv \in \mathbb{W}^{2,2}(Q).
    \end{split}
\end{equation}
\end{lemma}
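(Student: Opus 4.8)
The plan is to prove both identities of \eqref{eq:Reps_Sob} by direct computation, exploiting the explicit form \eqref{eqn-4_18} of $R_\eps$ together with the fact that the components of $R_\eps \rv$ are independent of $x_3$. The key structural observation, already recorded in \eqref{eqn:Rdelcommute}, is that $R_\eps$ commutes with the horizontal derivatives $\partial_j$ for $j=1,2$, while the vertical derivative $\partial_3$ annihilates $R_\eps \rv$ entirely since each component is constant in $x_3$. This reduces every spatial derivative of $R_\eps \rv$ to a horizontal derivative applied componentwise, after which the $x_3$-integration simply contributes a factor of $\eps$.

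For the first identity, I would write, for $\rv=(\rv_1,\rv_2)\in\mathbb{W}^{1,p}(Q)$,
\[
\nabla R_\eps \rv(\bx',x_3) = \bigl(\partial_1 R_\eps\rv,\,\partial_2 R_\eps\rv,\,\partial_3 R_\eps\rv\bigr)
 = \bigl(R_\eps\partial_1\rv,\,R_\eps\partial_2\rv,\,0\bigr),
\]
so that the pointwise norm $|\nabla R_\eps\rv(\bx',x_3)|$ equals $|\nabla'\rv(\bx')|$ and is independent of $x_3$. Raising to the $p$-th power and integrating over $Q_\eps$ via Fubini, the inner integral over $(0,\eps)$ yields the factor $\eps$, giving
\[
\|\nabla R_\eps\rv\|_{\lL^p(Q_\eps)}^p = \int_Q \int_0^\eps |\nabla'\rv(\bx')|^p\,dx_3\,d\bx'
 = \eps\,\|\nabla'\rv\|_{\lL^p(Q)}^p,
\]
and taking $p$-th roots produces the stated $\eps^{1/p}$. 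This is exactly the $\mathbb{W}^{1,p}$ analogue of the computation in Lemma~\ref{lemma2.22}, with $\nabla'\rv$ playing the role that $\rv$ played there.

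For the second identity, the argument is identical in spirit: since second horizontal derivatives again commute through $R_\eps$ and all derivatives involving $x_3$ vanish, the Laplacian collapses to $\Delta R_\eps\rv = R_\eps(\Delta'\rv)$, whose pointwise norm equals $|\Delta'\rv(\bx')|$ and is $x_3$-independent. The same Fubini step with $p=2$ then gives $\|\Delta R_\eps\rv\|_{\LQeps}^2 = \eps\|\Delta'\rv\|_{\LS}^2$, hence the factor $\eps^{1/2}$. I should be mild care that $\Delta$ here denotes the full three-dimensional Laplacian $\partial_1^2+\partial_2^2+\partial_3^2$; the point is precisely that the $\partial_3^2$ contribution is zero on $R_\eps\rv$, so it reduces to $\Delta'$.

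I do not anticipate a genuine obstacle here, as both claims are elementary consequences of the explicit structure of $R_\eps$. The only point demanding a line of justification is the commutation of $R_\eps$ with horizontal derivatives at the level of weak derivatives for general $\rv\in\mathbb{W}^{1,p}(Q)$ or $\mathbb{W}^{2,2}(Q)$, rather than merely for smooth $\rv$; this follows since $R_\eps$ is a bounded linear operator and smooth functions are dense in these Sobolev spaces, so the identities extend by continuity from the dense smooth class. Everything else is bookkeeping with Fubini's theorem and the definition of the $\lL^p$ and $\lL^2$ norms.
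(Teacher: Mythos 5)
Your proof is correct and is exactly the argument the paper intends: the paper omits the proof, noting only that the lemma follows from the definition of $R_\eps$ and the computation of Lemma~\ref{lemma2.22}, which is precisely the direct Fubini calculation you carry out (with the observation that $\partial_3$ annihilates $R_\eps\rv$ and that $R_\eps$ commutes with $\partial_1,\partial_2$ as in \eqref{eqn:Rdelcommute}). Your closing remark about extending the commutation identities from smooth $\rv$ to general Sobolev $\rv$ by density is the right way to make the computation rigorous.
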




\section{The stochastic NSEs on a thin domain}
\label{sec:snse}

This section deals with the proof of our main result, Theorem~\ref{thm:main_thm}. First we introduce our two systems; stochastic NSE in thin domain $Q_\eps$ and 2D stochastic NSE on $Q$, then we present the definition of martingale solutions for both systems. We also state the assumptions under which we prove our result. In \S\,\ref{sec:estimates_thin_film}, we obtain a priori estimates (formally) which we further use to establish some tightness criterion (see \S\,\ref{sec:tightness_thin_film}) which along with Jakubowski's generalisation of Skorokhod Theorem gives us a converging (in $\varepsilon$) subsequence. At the end of this section we show that the limiting object of the previously obtained converging subsequence is a martingale solution of 2D stochastic NSE on $Q$ (see \S\,\ref{sec:proof-theorem}).

In thin domains $Q_\eps$, which was introduced in \eqref{eqn-Q-eps}, we consider the following stochastic Navier--Stokes equations (SNSE)
\begin{align}
 d \tue - [ \nu \Delta \tue - (\tue \cdot \nabla) \tue - \nabla \widetilde{p}_\eps]dt = \tfe dt +  \widetilde{G}_\eps\, d\tWe(t) &\quad \text{ in } Q_\eps \times (0,T), \label{eq:5.1}\\
\ddiv \tue = 0 &\quad \text{ in } Q_\eps \times (0,T),\label{eq:5.2} \\
\wtd{u}_{\eps,3} = 0, \quad \partial_3 \wtd{u}_{\eps,j}=0, \;\; j=1,2
&\quad\text{ on } \Gamma^\eps_h \times (0,T), \label{eq:5.3} \\
\wtd{u}_\eps = 0 &\quad \text{ on  } \Gamma^\eps_{\ell} \times (0,T),\label{eq:5.3a} \\
\wtd{u}_\eps(\cdot,0) = \wtd{u}^\eps_0 &\quad\text{ in } Q_\eps, \label{eq:5.4}
\end{align}
where $\Gamma_h^\eps$ and $\Gamma_\ell^\eps$ were introduced in \eqref{eqn-Gamma}.
Recall that, $\tue=(\tu_{\eps,1}, \tu_{\eps,2}, \tu_{\eps,3})$ is the fluid velocity field, $p$ is the pressure, $\nu>0$ is a (fixed) kinematic viscosity, $\tu_0^\eps$ is a divergence free vector field on $Q_\eps$ and $\vec{n}$ is the unit normal outer vector to the boundary $\partial Q_\eps$.
We assume that\footnote{We could have considered the case $N = \infty$, but it doesn't give any additional mathematical novelty on the other hand makes notations cumbersome.} $N \in \N$.
We consider a family of maps
\[\widetilde{G}_\eps : \R_+  \to \mathcal{T}_2(\R^N; \rH_\eps)\]
such that
\begin{equation}
\label{eq:5.5}
\widetilde{G}_\eps(t) k  := \sum_{j=1}^N \widetilde{g}^j_\eps(t) k_j,\;\; k=(k_j)_{j=1}^N \in \mathbb{R}^N.
\end{equation}
for some  $\widetilde{g}_\eps^j:  \R_+  \to \rH_\eps$, $j=1, \cdots,N$. The Hilbert-Schmidt norm of $\widetilde{G}_\eps$ is given by
\begin{equation}
\label{eq:HSnorm}
\|\widetilde{G}_\eps(s)\|^2_{\mathcal{T}_2(\R^N; \rH_\eps)} = \sum_{j=1}^N \|\widetilde{g}^j_\eps(s)\|^2_{\LQeps}.
\end{equation}

Finally  we assume that  $\tWe(t)$, $t \ge 0$ is an $\R^N$-valued Wiener process defined  on the probability space $\left(\Omega, \mathcal{F}, \mathbb{F}, \mathbb{P}\right)$. We assume that
 $\left(\beta_j\right)_{j=1}^N$ are i.i.d real valued Brownian motions such that  $W(t)=\bigl(\beta_j(t)\bigr)_{j=1}^N$, $t\geq 0$.

In this section we shall establish convergence of the radial averages of the martingale solution of the 3D stochastic equations \eqref{eq:5.1}--\eqref{eq:5.4}, as the thickness of the domain $\eps \rightarrow 0$, to a martingale solution $u$ of the following 2D stochastic Navier--Stokes equations on $\bbS$:
\begin{align}
du - \left[ \nu \Delta' u - (u \cdot \nablaS) u  - \nablaS p \right]dt = f dt + G\,dW(t) & \quad\text{ in } \bbS \times (0,T), \label{eq:5.6}\\
\ddiv' u = 0 & \quad \text{ in } \bbS \times (0,T),\label{eq:5.7} \\
u = 0 & \quad \mbox{ on } \partial Q \times (0,T), \label{eq:5.8}\\
u(0, \cdot ) = u_0 & \quad\text{ in } \bbS,\label{eq:5.9}
\end{align}
where $u=(u_1, u_2)$ and $\DDelta$, $\nablaS$, $\ddivS$ are the differential operators for $\R^2$-valued vector field. Assumptions on initial data and external forcing will be specified later (see Assumptions \ref{ass5.1},~\ref{ass_sphere}). Here, $G : \Rp \to \mathcal{T}_2(\R^N; \rH)$ and $W(t)$, $t \ge 0$ is an $\R^N$-valued Wiener process such that
\begin{equation}
\label{eq:wiener}
G(t)dW(t) := \sum_{j=1}^N g^j(t) d\beta_j(t),
\end{equation}
where $N \in \N$, $\left(\beta_j\right)_{j=1}^N$ are i.i.d real valued Brownian motions as before and $\left\{g^j\right\}_{j=1}^N$ are elements of $\rH$, with certain relation to $\tge^j$, which is specified later in Assumption~\ref{ass_sphere}.

Now, we specify assumptions on the initial data $\widetilde{u}_0^\eps$ and external forcing $\widetilde{f}_\eps$, $\widetilde{g}_\eps^j$.
\begin{assumption}\label{ass5.1}
Let $\left(\Omega, \mathcal{F}, \mathbb{F}, \mathbb{P}\right)$ be the given filtered probability space. Let us assume that $p \ge 2$ and that  $\tu_0^\eps \in \rH_\eps$, for $\eps \in (0,1]$,   such that for some $C_1 > 0$
\begin{equation}
\label{eq:5.10}
\|\tu_0^\eps\|_{\LQeps} = C_1\eps^{1/2}, \qquad \eps \in (0,1].
\end{equation}
We also assume that  $\tfe \in L^p([0,T]; \rV_\eps^\ast)$, for $\eps \in (0,1]$,  such that for some $C_2 > 0$,
\begin{equation}
\label{eq:5.11}
\int_0^T\|\tfe(s)\|^p_{\rV_\eps^\ast}\,ds \le C_2 \eps^{p/2}, \qquad \eps \in (0,1].
\end{equation}
Let $\wtd{W}^\eps$ be an $\R^N$-valued Wiener process as before and assume that
\[
\widetilde{G}_\eps \in L^p(0,T; \mathcal{T}_2(\R^N; \rH_\eps)), \;\;\mbox{for $\eps \in (0,1]$,}
\]
such that, using convention \eqref{eq:5.5}, for each $j=1,\ldots,N$, $t \in [0,T]$
\begin{equation}\label{eq:5.12}
\|\tge^j(t)\|^p_{\LQeps} = O(\eps^{p/2}),\;\; \eps \in (0,1].
\end{equation}
\end{assumption}

Projecting the stochastic NSE (defined on thin domain $Q_\eps$) \eqref{eq:5.1}--\eqref{eq:5.4} onto $\rH_\eps$ using the Leray-Helmholtz projection operator and using the definitions of operators from \S\,\ref{section-prelim}, we obtain the following abstract It\^o equation in $\rH_\eps$, $t \in [0,T]$
\begin{equation}
\label{eq:5.13}
d \tue (t) + \left[\nu \rA_\eps \tue(t) + B_\eps(\tue(t), \tue(t))\right]dt = \tfe(t)\,dt + \widetilde{G}_\eps(t)\,d \tWe(t), \quad \tue(0) = \tu_0^\eps.
\end{equation}

\begin{definition}
\label{defn5.2}
Let $\eps \in (0,1]$. A martingale solution to \eqref{eq:5.13} is a system
\[\left(\Omega, \calF, \bbF, \bbP, \wtd{W}_\eps, \wtd{u}_\eps\right)\]
where $\left(\Omega, \calF, \bbP \right)$ is a probability space and $\bbF = \left(\calF_t\right)_{t \ge 0}$ is a filtration on it, such that
\begin{itemize}
\item $\wtd{W}_\eps$ is a $\R^N$-valued Wiener process on $\left(\Omega, \calF, \bbF, \mathbb{P}\right)$,
\item $\wtd{u}_\eps$ is $\rV_\eps$-valued progressively measurable process, $\rH_\eps$-valued weakly continuous $\bbF$-adapted process such that\footnote{The space $X^w$ denotes a topological space $X$ with weak topology. In particular, $C([0,T]; X^w)$ is the space of weakly continuous functions $\rv:[0,T] \to X$.} $\bbP$-a.s.
\[
    \tue(\cdot, \omega) \in C([0,T],{\rm H}_{\eps}^w) \cap L^2(0,T; \rV_\eps),
\]
\[\bbE \left[\frac{1}{2} \sup_{0\le s \le T} \|\tue(s) \|^2_{\LQeps}
  + \nu \int_0^T \| \nabla \tue (s) \|^2_{\LQeps}\,ds    \right] < \infty\]
and, for all $t \in [0,T]$ and $\rv \in \rV_\eps$, $\bbP$-a.s.,

\begin{align}
\label{eq:5.14}
& (\tue(t),\rv)_{\LQeps} +  \nu \int_0^t (\nabla \tue(s), \nabla \rv)_{\LQeps} \,ds + \int_0^t \left\langle B_\eps( \tue(s), \tue(s)), \rv\right\rangle_\eps\,ds \\
&\quad = (\tu_0^\eps, \rv)_{\LQeps} + \int_0^t \left\langle\tfe(s),\rv\right\rangle_{\eps}\,ds + \left( \int_0^t \widetilde{G}_\eps(s)\,d\wtd{W}_\eps(s),\rv \right)_\LQeps . \nn
\end{align}
\end{itemize}
\end{definition}

In the following remark we show that a martingale solution $\tue$ of \eqref{eq:5.13}, as defined above, satisfies an equivalent equation in the weak form.
\begin{remark}
\label{rem_mod_equation}
Let $\tue = \tue(t)$, $t \ge 0$ be a martingale solution of \eqref{eq:5.13}. We will use the following notations
\begin{equation}
    \label{eq:notation_stoch}
    \tale(t) := \tMe[\tue (t)], \qquad \tble(t) := \tNe[\tue(t)], \qquad \ale(t) := \oMe[\tue(t)],
\end{equation}
and also from Lemma~\ref{lem.avg_op_vec_prop} we have
\[\tue(t) = \tale(t) + \tble(t), \qquad t \in [0,T].\]
Then, for $\varphi \in \rU$, and the map $R_\eps$ defined in \eqref{eqn-4_17}--\eqref{eqn-4_18} we have
\begin{align*}
\left(\tale , \test \right)_{\LQeps} & = \eps \left(\ale, \varphi \right)_{\LS} ,\\
\left(\tMe u_0^\eps, \test \right)_{\LQeps} & = \eps \left(\oMe u_0^\eps, \varphi \right)_{\LS}, \\
\left(\nabla \tale , \nabla \test \right)_{\LQeps} & = \eps \left(\nablaS \ale, \nablaS \varphi \right)_{\LS}, \\
\left\langle \tMe \left(\left[\tale \cdot \nabla \right]\tale\right), \test \right\rangle_\eps & = \eps\left\langle\left[\ale \cdot \nablaS\right]\ale, \varphi \right\rangle,\\
\left\langle\tMe \tfe, \test \right\rangle_\eps & = \eps \left\langle\oMe \tfe, \varphi \right\rangle,\\
\left(\int_0^t\tMe \left[\wtd{G}_\eps(s)\,d\wtd{W}_\eps(s)\right], \test \right)_\LQeps & = \eps \left(\int_0^t \oMe \left[ \wtd{G}_\eps(s)\,d\wtd{W}_\eps(s)\right], \varphi \right)_\LS,
\end{align*}
and using Lemma~\ref{lemma_avgN} and Lemma~\ref{lem.avg_op_vec_prop}, we can rewrite the weak formulation identity \eqref{eq:5.14} as follows.
\begin{align}
\label{eq:snse_mod}
\left(\alpha_\eps(t), \varphi \right)_{\LS} & =  \left(\oMe \tu_0^\eps, \varphi \right)_{\LS} - \nu \int_0^t \left(\nablaS \alpha_\eps(s), \nablaS \varphi \right)_{\LS} ds \\
&\quad  -  \int_0^{t} \left\langle\left[\alpha_\eps(s) \cdot \nablaS \right]\alpha_\eps(s), \varphi \right\rangle ds - \frac{1}{\eps} \int_0^{t} \left\langle\left[\tble(s) \cdot \nabla \right]\tble(s), \test \right\rangle_\eps ds \nn \\
&\quad + \int_0^{t} \left\langle\oMe \tfe(s), \varphi \right\rangle ds  + \left(\int_0^t \oMe \left[ \wtd{G}_\eps(s)\,d\wtd{W}_\eps(s)\right], \varphi \right)_\LS. \nn
\end{align}
\end{remark}

Next, we present the definition of a martingale solution for 2D stochastic NSE on $\bbS$.
\begin{definition}
\label{defn_mart_SNSE_sphere}
A {martingale solution} to equation \eqref{eq:5.6}--\eqref{eq:5.9} is a system
\[\left(\widehat{\Omega}, \widehat{\calF}, \widehat{\bbF}, \widehat{\bbP}, \widehat{W}, \widehat{u}\right)\]
where $\left(\widehat{\Omega}, \widehat{\calF},\widehat{\bbP} \right)$ is a probability space and $\widehat{\bbF} = \left(\widehat{\calF}_t\right)_{t \ge 0}$ is a filtration on it, such that
\begin{itemize}
\item $\what{W}$ is an $\R^N$-valued Wiener process on $\left(\what{\Omega}, \what{\calF}, \what{\bbF}, \what{\bbP}\right)$,
\item $\what{u}$ is $\rV$-valued progressively measurable process, $\rH$-valued continuous $\what{\bbF}$-adapted process such that
\[
    \widehat{u}(\cdot, \omega) \in C([0,T],{\rm H}) \cap L^2(0,T; \rV),
\]
\[\what{\bbE} \left[ \sup_{0\le s \le T} \|\hu(s) \|^2_{\LS}
  + \nu \int_0^T \| \nablaS \hu (s) \|^2_{\LS}\,ds    \right] < \infty\]
and
\begin{align}
\label{eq:mart_sol_sphere}
\left(\hu(t), \varphi\right)_\LS + \nu \int_0^t \left(\nablaS \hu(s), \nablaS \varphi \right)_\LS\,ds + \int_0^{t} \left\langle\left[\hu(s) \cdot \nablaS \right]\hu(s), \varphi \right\rangle\,ds  \\
 = \left(u_0, \varphi \right)_\LS + \int_0^{t} \left\langle f(s), \varphi \right\rangle\,ds + \left(\int_0^t G(s) d \widehat{W}(s), \varphi \right)_\LS, \nn
\end{align}
for all $t \in [0,T]$ and $\varphi \in \rV$.
\end{itemize}
\end{definition}

\begin{assumption}\label{ass_sphere}
Let $p \ge 2$. Let $\left(\widehat{\Omega}, \widehat{\mathcal{F}}, \widehat{\mathbb{F}}, \widehat{\mathbb{P}}\right)$ be the given probability space, $u_0 \in \rH$ such that
\begin{equation}
\label{eq:initial_data_convg}
\lim_{\eps\rightarrow 0} \oMe\widetilde{u}_0^\eps = u_0 \quad \text{weakly in } \rH.
\end{equation}
Let $f \in L^p([0,T]; \rV^\ast)$, such that for every
$s \in [0,T]$,
\begin{equation}
\label{eq:convg_ext_force}
\lim_{\eps\rightarrow 0} \langle \oMe \tfe(s), \rv \rangle
  = \langle f(s), \rv \rangle \text{ for all } \rv \in \rV.
\end{equation}
And finally, we assume that $G \in L^p(0,T; \mathcal{T}_2(\R^N; \rH))$, such that for each $j=1,\ldots,N$ and $s \in [0,T]$, $\oMe \tge^j(s)$ converges weakly to $g^j(s)$ in $\mathbb{L}^2(\bbS)$ as $\eps \to 0$ and
\begin{equation}\label{eq:bdd_coeff}
\int_0^T\|g^j(t)\|^2_{\LS}dt \le M,
\end{equation}
for some $M > 0$.
\end{assumption}

\begin{remark}[{\bf Existence of martingale solutions}]
\label{rem_mart_sol_SNSE_shell}
For a fixed $\eps > 0$, the existence of a martingale solution to \eqref{eq:5.1}--\eqref{eq:5.2} with Dirichlet boundary conditions was shown in \cite{[FG95]}. More recently, the first named author along with Motyl proved the existence of martingale solutions to \eqref{eq:5.1}--\eqref{eq:5.2} on possibly unbounded domain with transport type noise; see \cite{[BM13]}. In \cite{[BS20]}, the first named author and Slavik have established the existence of martingale solutions for much more complex so-called primitive equations in three dimensions but with the boundary conditions \eqref{eq:5.3}--\eqref{eq:5.3a}. Thus, we believe that one can establish the existence of martingale solutions to \eqref{eq:5.1}--\eqref{eq:5.4} by following the standard approaches as in \cite{[FG95], [BM13], [BS20]}.

Moreover, in \cite{[BM13]}, the existence of a unique strong solution to 2D stochastic NSEs, \eqref{eq:5.6}--\eqref{eq:5.9}, was proved by showing the existence of a martingale solution and proving the pathwise uniqueness of such a martingale solution.
\end{remark}

We end this subsection by stating the main theorem of this article.

\begin{theorem}
\label{thm:main_thm}
Let the given data $\widetilde{u}_0^\eps$, $u_0$, $\tfe$, $f$, $\tge^j$, $g^j$, $j \in \{1, \cdots, N\}$ satisfy Assumptions~\ref{ass5.1} and \ref{ass_sphere}. Let $\left( \Omega, \mathcal{F}, \mathbb{F}, \mathbb{P}, \wtd{W}_\eps, \tue\right)$
be a martingale solution of \eqref{eq:5.1}--\eqref{eq:5.4} as defined in Definition~\ref{defn5.2}. Then, the averages in the vertical direction of this martingale solution i.e. $\hae := \oMe\tue$ converge to {the unique martingale solution}, $\left(\widehat{\Omega}, \widehat{\mathcal{F}}, \widehat{\mathbb{F}}, \widehat{\mathbb{P}}, \widehat{W}, \what{u}\right)$, of \eqref{eq:5.6}--\eqref{eq:5.9} in $L^2(\widehat{\Omega}\times[0,T]\times \bbS)$.
\end{theorem}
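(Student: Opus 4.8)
The plan is to run the standard compactness/martingale scheme on the rescaled equation \eqref{eq:snse_mod} for the average $\ale=\oMe\tue$, using the splitting $\tue=\tale+\tble$ with $\tale=\tMe\tue$, $\tble=\tNe\tue$. The first step is to derive a priori estimates that are uniform in $\eps$ \emph{after} rescaling. Applying the It\^o formula to $\|\tue(t)\|^2_{\LQeps}$ and using the cancellation \eqref{eqn-tri-eps-prop2} to kill the nonlinear term, together with the coercivity of $\rA_\eps$ and the Poincar\'e inequality on $\rV_\eps$, I feed in the scalings of Assumption~\ref{ass5.1} ($\|\tu_0^\eps\|^2_{\LQeps}=O(\eps)$, $\int_0^T\|\tfe\|^p_{\rV_\eps^\ast}=O(\eps^{p/2})$, $\|\tge^j\|^p_{\LQeps}=O(\eps^{p/2})$) and a Burkholder--Davis--Gundy/Gronwall argument to obtain, for every $p\ge 2$,
\[
\bbE\Big[\sup_{0\le t\le T}\|\tue(t)\|^p_{\LQeps}\Big] + \bbE\Big[\Big(\int_0^T\|\nabla\tue(s)\|^2_{\LQeps}\,ds\Big)^{p/2}\Big]\le C\eps^{p/2}.
\]
Dividing by $\eps$ and invoking the scaling identity \eqref{eqn-vscaling} converts these into bounds that are uniform in $\eps$: $\ale$ is bounded in $L^p(\Omega;L^\infty(0,T;\rH))\cap L^p(\Omega;L^2(0,T;\rV))$. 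By the Pythagorean identities \eqref{eqn-pythagoras1}--\eqref{eqn-pythagoras2} the same estimate controls the fluctuation and gives the crucial smallness $\bbE\int_0^T\|\tble(s)\|^2_{\rV_\eps}\,ds\le C\eps$.

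I expect the main obstacle to be showing that the singular term $\frac1\eps\int_0^t\langle[\tble\cdot\nabla]\tble,\test\rangle_\eps\,ds$ in \eqref{eq:snse_mod} vanishes as $\eps\to0$. Using the antisymmetry \eqref{eqn-tri-eps-prop1} I rewrite it as $-\frac1\eps\int_0^t b_\eps(\tble,\test,\tble)\,ds$, bound the integrand by $\|\tble\|^2_{L^3(Q_\eps)}\,\|\nabla\test\|_{L^3(Q_\eps)}$, and then combine the anisotropic estimate \eqref{eqn-NepsL3}, $\|\tble\|^2_{L^3(Q_\eps)}\le c_0\eps\|\tble\|^2_{\rV_\eps}$, with the retract identity \eqref{eq:Reps_Sob}, $\|\nabla\test\|_{L^3(Q_\eps)}=\eps^{1/3}\|\nablaS\varphi\|_{L^3(Q)}$. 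The dangerous $\eps^{-1}$ is then beaten by the gain $\eps\cdot\eps^{1/3}$, so that after taking expectations and using $\bbE\int_0^T\|\tble\|^2_{\rV_\eps}\,ds\le C\eps$ this term is $O(\eps^{4/3})\to0$. This interplay of the Poincar\'e inequality \eqref{NuV} (encoded in \eqref{eqn-NepsL3}) with the anisotropic Ladyzhenskaya bound is the technical heart of the argument; the cross terms $B_\eps(\tale,\tble)$, $B_\eps(\tble,\tale)$ have already been eliminated in the derivation of \eqref{eq:snse_mod} via Lemma~\ref{lem.avg_op_vec_prop} and the zero-average property of $\tNe$.

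With the uniform bounds in hand I would establish tightness of the laws of $\ale$ on $\calZ=C([0,T];\rU^\ast)\cap L^2_w(0,T;\rV)\cap L^2(0,T;\rH)\cap C([0,T];\rH^w)$, the time regularity following from \eqref{eq:snse_mod} via an Aldous-type increment estimate in $\rU^\ast$ (a fractional Sobolev/BDG bound handling the stochastic convolution $\oMe\int_0^\cdot\wtd{G}_\eps\,d\tWe$). Since these topologies are not metrizable I invoke Jakubowski's generalisation of the Skorokhod theorem to obtain, along a subsequence $\eps_k\to0$, a new probability space carrying copies $\haek\to\hu$ almost surely in $\calZ$ together with Wiener processes converging to some $\what{W}$. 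Passing to the limit in the Skorokhod copy of \eqref{eq:snse_mod}, the linear and forcing terms converge by Assumption~\ref{ass_sphere} and the weak $\rV$-convergence, the 2D nonlinearity $[\ale\cdot\nablaS]\ale$ converges using the strong $L^2(0,T;\rH)$ convergence of $\haek$, the singular term drops out by the estimate above, and the stochastic term is identified through $\oMe\tge^j\rightharpoonup g^j$ together with a martingale-characterisation argument; hence $\hu$ solves \eqref{eq:mart_sol_sphere}, i.e. it is a martingale solution in the sense of Definition~\ref{defn_mart_SNSE_sphere}. Finally, since the 2D equation admits a \emph{unique} (hence pathwise unique) martingale solution, every subsequence has a further subsequence converging to the same $\hu$; a Gy\"ongy--Krylov argument then promotes this to convergence of the whole family, and combining the almost sure convergence in $L^2(0,T;\rH)$ with the uniform integrability supplied by the higher moment bounds (taking $p>2$ in Assumption~\ref{ass5.1}) yields the asserted convergence in $L^2(\homega\times[0,T]\times\bbS)$.
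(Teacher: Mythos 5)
Your proposal is correct and follows essentially the same route as the paper: rescaled a priori estimates via It\^o's formula and the Pythagorean/scaling identities for $\tMe,\tNe$, the key smallness $\bbE\int_0^T\|\tble\|^2_{\rV_\eps}\,ds\le C\eps$ combined with the anisotropic Ladyzhenskaya--Poincar\'e bound \eqref{eqn-NepsL3} and the retract scaling to kill the singular $\frac1\eps b_\eps(\tble,\tble,\cdot)$ term, Aldous-condition tightness in $\rU^\ast$, Jakubowski--Skorokhod, and term-by-term identification of the limit as the unique 2D martingale solution. The only (harmless) deviations are that you integrate the singular term by parts to get $O(\eps^{4/3})$ where the paper estimates it directly and gets $O(\eps^{2/3})$, and that you make explicit the final subsequence-uniqueness upgrade to convergence of the whole family, which the paper leaves implicit.
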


\begin{remark}
\label{rem.one_prob_space}
According to Remark~\ref{rem_mart_sol_SNSE_shell}, for every $\eps \in [0,1]$ there exists a martingale solution of \eqref{eq:5.1}--\eqref{eq:5.4} as defined in Definition~\ref{defn5.2}, i.e. we will obtain a tuple $\left(\Omega_\eps, \mathcal{F}_\eps, \mathbb{F}_\eps, \mathbb{P}_\eps, \wtd{W}_\eps, \tue\right)$ as a martingale solution. It was shown in \cite{[Jakubowski97]} that is enough to consider only one probability space, namely,
\[\left(\Omega_\eps, \mathcal{F}_\eps, \mathbb{P}_\eps\right) = \left([0,1], \mathcal{B}([0,1]), \mathcal{L}\right) \qquad \forall\, \eps \in (0,1],\]
where $\mathcal{L}$ denotes the Lebesgue measure on $[0,1]$. Thus, it is justified to consider the probability space $\left( \Omega, \mathcal{F}, \mathbb{P}\right)$ independent of $\eps$ in Theorem~\ref{thm:main_thm}.
\end{remark}

\begin{remark}\label{rem-Kuksin}
Chueshov and Kuksin in \cite{[CK08a]} stated a result; see \cite[Proposition~12]{[CK08a]}, which is similar to our Theorem~\ref{thm:main_thm}. These  authors study the so called Leray-$\alpha$ approximation \cite{[Leray34]}, of the stochastic 3D Navier--Stokes equations
\begin{equation}
    \label{eq:leray_NSE}
    u_t + \nu A_\eps u + B_\eps(G_\alpha u, u) = f_\eps + \dot{W_\eps}
\end{equation}
where $A_\eps$, $B_\eps$ are the Stokes operator and bilinear form, as introduced in \S\,\ref{section-prelim}, $G_\alpha = \left(1 + \alpha A_\eps\right)^{-1}$, and $\dot{W_\eps}$ is the white noise.
 They proved the existence and the uniqueness of a solutions $u^\alpha_\eps$ to  \eqref{eq:leray_NSE} and
 they  established the existence of a unique stationary measure $\mu_\eps^\alpha$ to \eqref{eq:leray_NSE} for  sufficiently  non-degenerate  noise, and proved that every solution of \eqref{eq:leray_NSE} converges to it in law  distribution as time goes to infinity. The main result of their work was to show the convergence of $M_\eps \mu^\alpha_\eps$ to the unique stationary measure $\mu$ of 2D stochastic Navier--Stokes equations as $\alpha, \eps \to 0$ simultaneously \cite[Theorem~10]{[CK08a]}, see \S\,\ref{sec:Average} for the definition of the map $M_\eps$ (which can be generalised for measures). In the statement of the Proposition~12, the authors state that under suitable growth and convergence assumptions on the initial data $u_\eps^0$, the law of  $M_\eps u^\alpha_\eps$ converges to the law of $u$ as $\eps, \alpha \to 0$ simultaneously, in appropriate functional space.

{In retrospect, it seems that our result has striking similarities to that of the statement  in \cite[Proposition~12]{[CK08a]}. At the same time there are subtle differences, which we state in the following. Firstly, we consider Dirichlet boundary conditions at the level of flat domain ($Q$, see \eqref{eqn-snse4}) in contrast to ``periodic" boundary conditions. Secondly, we consider the original 3D stochastic Navier--Stokes equations instead of the Leray-$\alpha$ approximation, and in particular, show that for every martingale solution $\tue$ of \eqref{eqn-snse1}--\eqref{eqn-snse5}, there exists a subsequence of $\left\{\oMe \tue\right\}_{\eps > 0}$ which converges to a martingale solution of the 2D stochastic Navier--Stokes equations in a suitable functional space. Finally, we provide a self-contained proof with all the details of our main result along with some higher order moment estimates, see Lemma~\ref{lemma_higher_estimates_u}.}
\end{remark}

\subsection{Estimates}
\label{sec:estimates_thin_film}

From this point onward we will assume that for every $\eps \in (0,1]$ there exists a martingale solution $\left(\Omega, \calF, \bbF, \bbP, \wtd{W}_\eps, \wtd{u}_\eps\right)$ of \eqref{eq:5.13}.
Please note that we do not claim neither we use the uniqueness of this solution. \\
The main aim of this subsection is to obtain estimates for $\alpha_\eps$ and $\widetilde{\beta}_\eps$ uniform in $\eps$ using the estimates for the process $\tue$.

The energy inequality \eqref{s_energy} and the higher-order estimates \eqref{eq:hoe1}--\eqref{eq:hoe2}, satisfied by the process $\tue$, as obtained in Lemma~\ref{lemma_apriori_sto_tu} and Lemma~\ref{lemma_higher_estimates_u} is actually a consequence (essential by-product) of the existence proof. In principle, one obtains these estimates (uniform in the approximation parameter $N$) for the finite-dimensional process $\tue^{(N)}$ (using Galerkin approximation) with the help of the It\^o lemma. Then, using the lower semi-continuity of norms, convergence result ($\tue^{(N)} \to \tue$ in some sense),  one can establish the estimates for the limiting process. Such a methodology was employed in a proof of Theorem 4.8 in the recent paper \cite{[BMO17]} by the first named author, Motyl and Ondrej\'at.

In Lemma~\ref{lemma_apriori_sto_tu} and Lemma~\ref{lemma_higher_estimates_u} we present a formal proof where we assume that one can apply (ignoring the existence of Lebesgue and stochastic integrals) the It\^o lemma to the infinite dimensional process $\tue$. The idea is to showcase (though standard) the techniques involved in establishing such estimates.

\begin{lemma}
\label{lemma_apriori_sto_tu}
Let ${\tu}_0^\eps \in \rH_\eps$, $\tfe \in L^2\left([0,T]; \rV^\ast_\eps\right)$ and $\tge \in L^2\left([0,T]; \mathcal{T}_2(\R^N; \rH_\eps\right)$. Then, the martingale solution $\wtd{u}_\eps$
of \eqref{eq:5.13} satisfies the following energy inequality
\begin{equation}\label{s_energy}
\begin{aligned}
&\bbE \left[\frac{1}{2} \sup_{0\le s \le T} \|\wtd{u}_\eps(s) \|^2_{\lL^2(Q_\eps)} ds
  + \nu \int_0^T \| \nabla \wtd{u}_\eps (s) \|^2_{\lL^2(Q_\eps)}    \right] \\
&\qquad \le  \|u^\eps_0\|^2_{\lL^2(Q_\eps)}
+ \frac{1}{\nu} \int_0^T \|\tfe(s)\|^2_{\rV^\ast_\eps}
+ K \int_0^T \sum_{j=1}^N \|\wtd{g}^\eps_j(s)\|^2_{\lL^2(Q_\eps)} ds,
\end{aligned}
\end{equation}
where $K$ is some positive constant independent of $\eps$.
\end{lemma}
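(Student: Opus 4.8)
The plan is to apply the It\^o formula to $t\mapsto\|\tue(t)\|^2_{\LQeps}$ along the solution of the abstract equation \eqref{eq:5.13}, treating $\tue$ formally as a finite--dimensional process as flagged in the paragraph preceding the lemma. Moving the viscous term to the left with the help of \eqref{def:Aeps} (so that ${}_{\rV^\ast_\eps}\langle\rA_\eps\tue,\tue\rangle_{\rV_\eps}=\|\nabla\tue\|^2_{\LQeps}$), this yields, for every $t\in[0,T]$,
\begin{align*}
\|\tue(t)\|^2_{\LQeps} + 2\nu\int_0^t\|\nabla\tue(s)\|^2_{\LQeps}\,ds
&= \|\tu_0^\eps\|^2_{\LQeps}
 + 2\int_0^t {}_{\rV^\ast_\eps}\langle\tfe(s),\tue(s)\rangle_{\rV_\eps}\,ds \\
&\quad - 2\int_0^t {}_{\rV^\ast_\eps}\langle B_\eps(\tue(s),\tue(s)),\tue(s)\rangle_{\rV_\eps}\,ds \\
&\quad + \int_0^t\|\widetilde{G}_\eps(s)\|^2_{\mathcal{T}_2(\R^N;\rH_\eps)}\,ds
 + 2\int_0^t\big(\tue(s),\widetilde{G}_\eps(s)\,d\tWe(s)\big)_{\LQeps}.
\end{align*}

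The nonlinear term drops out entirely by the antisymmetry \eqref{eqn-tri-eps-prop2}, since ${}_{\rV^\ast_\eps}\langle B_\eps(\tue,\tue),\tue\rangle_{\rV_\eps}=b_\eps(\tue,\tue,\tue)=0$. For the forcing I would use the duality estimate followed by Young's inequality,
\[
2\,{}_{\rV^\ast_\eps}\langle\tfe,\tue\rangle_{\rV_\eps}
\le 2\|\tfe\|_{\rV^\ast_\eps}\|\nabla\tue\|_{\LQeps}
\le \nu\|\nabla\tue\|^2_{\LQeps}+\tfrac1\nu\|\tfe\|^2_{\rV^\ast_\eps},
\]
where $\|\cdot\|_{\rV^\ast_\eps}$ is the norm dual to the gradient (a genuine norm on $\rV_\eps$ by the Poincar\'e inequality, Proposition~\ref{prop-Poincare}). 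Absorbing the first term into the dissipation on the left leaves the net coefficient $\nu$ there and produces the constant $\tfrac1\nu$ in front of $\|\tfe\|^2_{\rV^\ast_\eps}$, as in \eqref{s_energy}.

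Having reached the pathwise bound
\[
\|\tue(t)\|^2_{\LQeps}+\nu\int_0^t\|\nabla\tue(s)\|^2_{\LQeps}\,ds
\le \|\tu_0^\eps\|^2_{\LQeps}+\tfrac1\nu\int_0^T\|\tfe\|^2_{\rV^\ast_\eps}\,ds
+\int_0^T\|\widetilde{G}_\eps\|^2_{\mathcal{T}_2(\R^N;\rH_\eps)}\,ds
+2\sup_{r\le T}\Big|\int_0^r\!\big(\tue,\widetilde{G}_\eps\,d\tWe\big)_{\LQeps}\Big|,
\]
I would take the supremum over $t\in[0,T]$ and then expectation. The only delicate term is the supremum of the stochastic integral, which I would control by the Burkholder--Davis--Gundy inequality and then Cauchy--Schwarz (recalling \eqref{eq:HSnorm}):
\begin{align*}
2\,\E\sup_{0\le t\le T}\Big|\int_0^t\big(\tue(s),\widetilde{G}_\eps(s)\,d\tWe(s)\big)_{\LQeps}\Big|
&\le C\,\E\Big(\int_0^T\sum_{j=1}^N\big|(\tue(s),\tge^j(s))_{\LQeps}\big|^2\,ds\Big)^{1/2}\\
&\le C\,\E\Big[\Big(\sup_{0\le s\le T}\|\tue(s)\|^2_{\LQeps}\Big)^{1/2}\Big(\int_0^T\sum_{j=1}^N\|\tge^j(s)\|^2_{\LQeps}\,ds\Big)^{1/2}\Big].
\end{align*}
A further Young's inequality with a small weight splits this into $\tfrac14\E\sup_s\|\tue(s)\|^2_{\LQeps}$ plus a multiple of $\int_0^T\sum_j\|\tge^j\|^2_{\LQeps}\,ds$; the supremum piece is then absorbed into the left-hand side (this is exactly the role of the coefficient $\tfrac12$ on the supremum in \eqref{s_energy}), and collecting the remaining constants into $K$ yields the asserted estimate.

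The step I expect to be the main obstacle is precisely this treatment of $\E\sup_t$ of the stochastic integral: one cannot take the expectation term by term because the running supremum of $\tue$ and the martingale are correlated, so the Burkholder--Davis--Gundy estimate together with the Cauchy--Schwarz/Young splitting is the essential device, and the absorption succeeds only because the supremum norm can be made to reappear with a coefficient smaller than the one carried on the left. A secondary, acknowledged caveat is that the whole computation is formal, It\^o's lemma being applied directly to the infinite--dimensional process; a rigorous version would establish the identity for the Galerkin approximations $\tue^{(N)}$ and pass to the limit using the weak lower semicontinuity of the norms, exactly as indicated before the statement.
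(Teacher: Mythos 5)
Your proposal is correct and follows essentially the same route as the paper: the It\^o formula for $\|\cdot\|^2_{\LQeps}$, cancellation of the nonlinear term via \eqref{eqn-tri-eps-prop2}, Young's inequality to absorb half the dissipation and produce the $\tfrac1\nu$ factor, and then the Burkholder--Davis--Gundy inequality combined with Cauchy--Schwarz and Young to split the supremum of the stochastic integral into a $\tfrac14\,\E\sup$ piece (absorbed on the left) plus a multiple of $\int_0^T\sum_j\|\tge^j\|^2_{\LQeps}\,ds$. The caveat you flag about the formal application of It\^o's lemma and the Galerkin/lower-semicontinuity remedy matches the paper's own disclaimer preceding the lemma.
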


\begin{proof}
Let $\tue$ be the solution of \eqref{eq:5.13}. Using the It\^{o} formula for the function $\|\xi\|^2_{\lL^2(Q_\eps)}$ with the process $\wtd{u}_\eps$, for a fixed $t \in [0,T]$ we have
\begin{equation}\label{eqn-Ito}
\begin{aligned}
&\|\wtd{u}_{\eps}(t)\|^2_{\LQeps} + 2\nu \int_0^t \|\nabla \wtd{u}_\eps(s)\|^2_{\lL^2(Q_\eps)} ds =
\|\tu_0^\eps\|^2_{\LQeps} + 2\int_0^t \langle \tfe(s), \tue(s) \rangle_\eps ds
 \\
&\qquad \quad  + 2 \left(\int_0^t \wtd{G}_\eps(s)d\wtd{W}_\eps(s), \wtd{u}_\eps(s)\right)_{\LQeps}
 + \int_0^t \|\wtd{G}_\eps(s)\|^2_{\mathcal{T}_2(\R^N; \rH_\eps)} ds.
\end{aligned}
\end{equation}
Using the Cauchy-Schwarz and the Young inequality for the third term on the right-hand-side of the above inequality, we obtain the following estimate
\[
|\langle\tfe, \wtd{u}_\eps\rangle_\eps | \le \|\wtd{u}_\eps\|_{\rV_\eps} \|\tfe\|_{\rV^\ast_\eps}
                     \le \frac{\nu}{2}  \|\wtd{u}_\eps\|^2_{\rV_\eps}  + \frac{1}{2\nu} \|\tfe\|^2_{\rV^\ast_\eps}
\]
which we use in \eqref{eqn-Ito} to simplify \eqref{eqn-Ito} 
\begin{equation}\label{eqn-Ito2}
\begin{aligned}
&\|\wtd{u}_{\eps}(t)\|^2_{\LQeps} + \nu \int_0^t \|\nabla \wtd{u}_\eps(s)\|^2_{\lL^2(Q_\eps)} ds \le
\|\tu_0^\eps\|^2_{\LQeps} + \frac{1}{\nu}\int_0^t \|\tfe(s) \|^2_{\rV^\ast_\eps} ds  \\
&\qquad \quad + 2 \left(\int_0^t \wtd{G}_\eps(s)d\wtd{W}_\eps(s), \wtd{u}_\eps(s)\right)_{\LQeps}
 + \int_0^t \sum_{j=1}^N \|\wtd{g}^\eps_j(s)\|^2_{\LQeps} ds.
\end{aligned}
\end{equation}
Using the Burkholder-Davis-Gundy inequality, we have
\begin{equation}\label{BDG}
\begin{aligned}
&\bbE  \sup_{0\le t \le T} \left| \left(\int_0^t \wtd{G}_\eps(s)d\wtd{W}_\eps(s), \wtd{u}_\eps(s)\right)_{\LQeps} \right| \\
&\qquad
\le C \bbE \left( \int_0^T \|\wtd{u}_\eps(s)\|^2_{\LQeps} \|\wtd{G}_\eps(s)\|^2_{\mathcal{T}_2(\R^N;\rH_\eps)} ds\right)^{1/2} \\
& \qquad \le C \bbE \left[ \left(\sup_{0\le t \le T} \|\wtd{u}_\eps(t)\|^2_{\LQeps} \right)^{1/2}
   \left( \int_0^T \sum_{j=1}^N \|\wtd{g}_j^\eps(s)\|^2_{\LQeps} ds\right)^{1/2} \right] \\
&\qquad \le \frac{1}{4} \bbE\left( \sup_{0 \le t \le T} \| \wtd{u}_\eps(t)\|^2_{\LQeps} \right)
 + C \int_0^T\sum_{j=1}^N \|\wtd{g}_j^\eps(s)\|^2_{\LQeps} ds .
\end{aligned}
\end{equation}
Taking the supremum of \eqref{eqn-Ito2} over the inteveral $[0,T]$, then taking
expectation and using inequality \eqref{BDG} we infer the energy inequality \eqref{s_energy}.
\end{proof}

Let us recall the following notations, which we introduced earlier, for $t \in [0,T]$
\begin{equation}
\label{eq:notation_1}
\wtd{\alpha}_\eps(t) := \wtd{M}_\eps [\wtd{u}_\eps(t)], \qquad \wtd{\beta}_\eps := \wtd{N}_\eps[ \wtd{u}_\eps(t)], \qquad \alpha_\eps := \ocirc{M}_\eps[\wtd{u}_\eps(t)].
\end{equation}

\begin{lemma}\label{lem: a priori M sto}
 Let $\wtd{u}_\eps$ be a martingale solution of \eqref{eq:5.13} and the Assumption~\ref{ass5.1} hold, in particular for $p = 2$. Then
\begin{equation}\label{a priori M sto}
\begin{aligned}
&\bbE \left[ \frac{1}{2} \sup_{0\le t\le T} \|\alpha_\eps(t)\|^2_{\lL^2(Q)} +
\nu \int_0^t \| \nabla' \alpha_\eps(s)\|^2_{\lL^2(Q)} ds \right] \le C_1^2 + \frac{C_2}{\nu} + C_3T.
\end{aligned}
\end{equation}
where $C_1$, $C_2$ are positive constants from \eqref{eq:5.10} and \eqref{eq:5.11} respectively and $C_3 > 0$ (determined within the proof) is another constant independent of $\eps$.
\end{lemma}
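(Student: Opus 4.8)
The plan is to derive the estimate for $\ale$ directly from the energy inequality \eqref{s_energy} for $\tue$, transporting the latter through the scaling identities of \S\ref{sec:Average}. First I would record the two scaling relations connecting $\ale = \oMe\tue$ to its three-dimensional lift $\tale = \tMe\tue$. For the $\lL^2$-norm this is exactly Lemma~\ref{lem.vec_scaling}, namely $\|\tale\|^2_{\LQeps} = \eps\,\|\ale\|^2_{\LS}$. For the gradient I would observe that, since $\tale$ is independent of $x_3$ and its first two components coincide with those of $\ale$, one has $\nabla\tale = (\nablaS\ale, 0)$, whence a Fubini computation gives the matching identity $\|\nabla\tale\|^2_{\LQeps} = \eps\,\|\nablaS\ale\|^2_{\LS}$.

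Next I would combine these with the Pythagoras identities of Corollary~\ref{cor-Pythagoras}, which yield $\|\tale\|_{\LQeps} \le \|\tue\|_{\LQeps}$ and $\|\nabla\tale\|_{\LQeps} \le \|\nabla\tue\|_{\LQeps}$ (the $\tble$-contributions being simply dropped). Dividing by $\eps$ produces the pointwise-in-time bounds
\[
\|\ale(t)\|^2_{\LS} \le \tfrac{1}{\eps}\,\|\tue(t)\|^2_{\LQeps}, \qquad \|\nablaS\ale(t)\|^2_{\LS} \le \tfrac{1}{\eps}\,\|\nabla\tue(t)\|^2_{\LQeps}.
\]
Taking the supremum over $t \in [0,T]$ in the first, integrating the second over $[0,T]$, and taking expectations, the entire left-hand side of \eqref{a priori M sto} is bounded by $\tfrac{1}{\eps}$ times the expectation on the left of the energy inequality \eqref{s_energy}.

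It then remains to insert \eqref{s_energy} and cash in the data assumptions with $p = 2$. By Assumption~\ref{ass5.1} we have $\|u_0^\eps\|^2_{\LQeps} = C_1^2\eps$, $\int_0^T\|\tfe(s)\|^2_{\rV^\ast_\eps}\,ds \le C_2\eps$, and $\|\tge^j(s)\|^2_{\LQeps} = O(\eps)$ uniformly, so that $K\int_0^T\sum_{j=1}^N\|\tge^j(s)\|^2_{\LQeps}\,ds \le C_3\eps T$ for a constant $C_3$ independent of $\eps$. Substituting, the prefactor $\tfrac{1}{\eps}$ cancels each $\eps$ exactly and delivers the bound $C_1^2 + C_2/\nu + C_3 T$, which is \eqref{a priori M sto}.

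I do not expect a genuine analytic obstacle here: the argument is a bookkeeping exercise in which the $\eps^{-1}$ blow-up of the averaging map (Lemma~\ref{lemma2.2}) is by design annihilated by the $\eps^{1/2}$ scaling built into the hypotheses \eqref{eq:5.10}--\eqref{eq:5.12}. The only point requiring care is that every constant carried through---in particular $K$ from \eqref{s_energy} and the implicit constant in the $O(\eps)$ of \eqref{eq:5.12}---must be independent of $\eps$, which is precisely what those statements assert; this uniformity is what makes the final bound $\eps$-free, the property on which the subsequent tightness and passage-to-the-limit arguments depend.
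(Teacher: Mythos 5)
Your proposal is correct and follows essentially the same route as the paper: Pythagoras (Corollary~\ref{cor-Pythagoras}) to dominate the $\tale$-terms by the $\tue$-terms, the energy inequality \eqref{s_energy}, the scaling identity of Lemma~\ref{lem.vec_scaling} (and its gradient analogue, which you justify slightly more explicitly than the paper does), and the $\eps$-scalings of Assumption~\ref{ass5.1} with $p=2$ to cancel the $1/\eps$. The only difference is cosmetic ordering — you divide by $\eps$ before invoking the energy inequality, while the paper substitutes the scaling into the left-hand side afterwards.
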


\begin{proof}
Let $\tue$ be a martingale solution of \eqref{eq:5.13}, then it satisfies the energy inequality \eqref{s_energy}. From \eqref{eqn-pythagoras1} and \eqref{eqn-pythagoras2},
we have $\|\wtd{\alpha}_\eps(t)\|^2_{\lL^2(Q_\eps)} \le \|\wtd{u}_\eps(t)\|^2_{\lL^2(Q_\eps)}$, $\|\nabla \wtd{\alpha}_\eps(t)\|^2_{\lL^2(Q_\eps)} \le \|\nabla \wtd{u}_\eps(t)\|^2_{\lL^2(Q_\eps)}$ respectively.
Thus, from these bounds and energy inequality \eqref{s_energy}, we get
\begin{equation}\label{Menergy}
\begin{aligned}
&\bbE \left[\frac{1}{2} \sup_{0\le t \le T}\|\wtd{\alpha}_\eps(t)\|^2_{\lL^2(Q_\eps)}
+ \nu \int_0^T \| \nabla \wtd{\alpha}_\eps(s)\|^2_{\lL^2(Q_\eps)} ds \right] \\
& \qquad  \le \|\tu^\eps_0\|^2_{\lL^2(Q_\eps)}
+ \frac{1}{\nu} \int_0^T \|\tfe(s)\|^2_{\rV^\ast_\eps} ds +
  K  \int_0^T \sum_{j=1}^N \|\wtd{g}^\eps_j(s)\|^2_{\lL^2(Q_\eps)} ds.
\end{aligned}
\end{equation}
Now using the scaling property \eqref{eqn-vscaling} we rewrite the left hand side of \eqref{Menergy} as
\begin{equation}\label{eq:above}
\begin{aligned}
&\bbE\left[\frac{1}{2}\sup_{0 \le t\le T}\eps\|\alpha_\eps(t)\|^2_{\lL^2(Q)} +
\eps \nu \int_0^T \|\nabla' \alpha_\eps(s)\|^2_{\lL^2(Q)} ds\right] \\
&\qquad \le \E \left[\|u_0\|^2_{\lL^2(Q_\eps)}
+ \frac{1}{\nu} \int_0^T \|\tfe(s)\|^2_{\rV^\ast_\eps} ds
+ \frac{C}{2} \sum_{j=1}^N \int_0^t \|\wtd{g}^\eps_j(s)\|^2_{\lL^2(Q_\eps)} ds \right].
\end{aligned}
\end{equation}
By the assumptions on $\wtd{g}_j^\eps$ \eqref{eq:5.12}, there exists a positive constant $c$ such that for every $1 \le j \le N$, $t \in [0,T]$,
\begin{equation}
\label{eq:assump_g_c}
\|\wtd{g}_j^\eps(t)\|^2_{\lL^2(Q_\eps)} \le c \eps.
\end{equation}
Therefore, using \eqref{eq:5.10}, \eqref{eq:5.11} and \eqref{eq:assump_g_c} in \eqref{eq:above}, cancelling $\eps$ on both sides and defining $C_3 = NKc$, we infer inequality \eqref{a priori M sto}.
\end{proof}

From the results of Lemma~\ref{lem: a priori M sto} and Lemma~\ref{lemma:M N}, we deduce that
\begin{equation}
\label{eq:alpha_bdd}
\left\{ \alpha_\eps \right\}_{\eps > 0}\, \mbox{ is bounded in }\,
L^2(\Omega;L^\infty(0,T; \rH) \cap L^2(0,T; \rV)).
\end{equation}
Since $\rV$ can be embedded into $\mathbb{L}^6(\bbS)$, by using interpolation between $L^\infty(0,T; \rH)$ and $L^2(0,T; \mathbb{L}^6(\bbS))$ we obtain
\begin{equation}
\label{eq:5.28}
\bbE \int_0^T \|\ale(s)\|^2_{\mathbb{L}^3(\bbS)}\,ds \le C\;.
\end{equation}

\begin{lemma}\label{a priori N sto}
Let $\wtd{u}_\eps$ be a martingale solution of \eqref{eq:5.13} and Assumption~\ref{ass5.1} hold, in particular for $p =2$. Then
\begin{equation}
\label{eq:5.29}
\bbE \left[\frac{1}{2}\sup_{0 \le t \le T} \|\wtd{\beta}_\eps(t)\|^2_{\lL^2(Q_\eps)}
+ \nu \int_0^T \| \nabla \wtd{\beta}_\eps(s) \|_{\lL^2(Q_\eps)}^2 ds \right]
   \le \eps\left(C_1^2 + \frac{C_2}{\nu} + C_3T\right).
\end{equation}
\end{lemma}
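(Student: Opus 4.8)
The plan is to read off this estimate directly from the energy inequality \eqref{s_energy} for $\tue$, using the orthogonality of the ranges of $\tMe$ and $\tNe$ together with the $\eps$-scaling already encoded in Assumption~\ref{ass5.1}. The key observation is that, unlike the $\ale$-estimate of Lemma~\ref{lem: a priori M sto}, here everything can be kept in the $\LQeps$ norm, so that no scaling identity such as \eqref{eqn-vscaling} is invoked and the factor $\eps$ on the right-hand side is simply inherited from the data.

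First I would apply Corollary~\ref{cor-Pythagoras} to the process $\tue(s) \in \rV_\eps \subset \hH^1(Q_\eps)$ (valid for a.e.\ $s$, which is exactly what the gradient identity in \eqref{eqn-pythagoras2} needs). Since $\|\tue\|^2_{\LQeps} = \|\tale\|^2_{\LQeps} + \|\tble\|^2_{\LQeps}$ and likewise for the gradients, one obtains the pathwise bounds
\[
\|\tble(t)\|^2_{\LQeps} \le \|\tue(t)\|^2_{\LQeps}, \qquad \|\nabla \tble(t)\|^2_{\LQeps} \le \|\nabla \tue(t)\|^2_{\LQeps}.
\]
Taking the supremum over $[0,T]$ in the first inequality, integrating the second over $[0,T]$, and then taking expectation yields
\[
\bbE\left[\frac{1}{2}\sup_{0\le t\le T}\|\tble(t)\|^2_{\LQeps} + \nu \int_0^T \|\nabla \tble(s)\|^2_{\LQeps}\,ds\right] \le \bbE\left[\frac{1}{2}\sup_{0\le t\le T}\|\tue(t)\|^2_{\LQeps} + \nu \int_0^T \|\nabla \tue(s)\|^2_{\LQeps}\,ds\right].
\]

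Next I would estimate the right-hand side by \eqref{s_energy} and feed in the hypotheses of Assumption~\ref{ass5.1} with $p=2$: $\|\tu_0^\eps\|^2_{\LQeps} = C_1^2\eps$ from \eqref{eq:5.10}, $\frac{1}{\nu}\int_0^T \|\tfe(s)\|^2_{\rV^\ast_\eps}\,ds \le \frac{C_2}{\nu}\eps$ from \eqref{eq:5.11}, and $\|\tge^j(s)\|^2_{\LQeps} \le c\eps$ from \eqref{eq:assump_g_c}, so the noise term contributes at most $NKcT\eps = C_3T\eps$ with $C_3 = NKc$ chosen exactly as in the proof of Lemma~\ref{lem: a priori M sto}. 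Summing these three contributions shows the right-hand side of \eqref{s_energy} equals $\eps\bigl(C_1^2 + \frac{C_2}{\nu} + C_3T\bigr)$, which is precisely \eqref{eq:5.29}.

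I do not expect a genuine obstacle here: the whole content is the remark that the energy bound for $\tue$ is itself of order $\eps$ (because the data in Assumption~\ref{ass5.1} scale like $\eps^{1/2}$ in norm), and that Corollary~\ref{cor-Pythagoras} transfers this bound verbatim to the $\tNe$-component. The only points requiring care are to ensure the gradient Pythagorean identity is applied to an $\hH^1(Q_\eps)$-process and to keep the constant $C_3 = NKc$ consistent with the companion estimate for $\ale$; no Itô calculus beyond that already used in Lemma~\ref{lemma_apriori_sto_tu} is needed.
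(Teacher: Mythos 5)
Your proposal is correct and follows exactly the paper's own route: the Pythagorean bounds from Corollary~\ref{cor-Pythagoras} reduce the claim to the energy inequality \eqref{s_energy}, whose right-hand side is of order $\eps$ by Assumption~\ref{ass5.1} with $p=2$ and \eqref{eq:assump_g_c}. Your remark that no scaling identity is needed here (in contrast to Lemma~\ref{lem: a priori M sto}) is also consistent with the paper's treatment.
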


\begin{proof}
Let $\tue$ be a martingale solution of \eqref{eq:5.13}, then it satisfies the energy inequality \eqref{s_energy}. From \eqref{eqn-pythagoras1} and \eqref{eqn-pythagoras2},
we have $\|\wtd{\beta}_\eps(t)\|^2_{\lL^2(Q_\eps)} \le \| \wtd{u}_\eps(t)\|^2_{\lL^2(Q_\eps)}$
and $\|\nabla \wtd{\beta}_\eps(t)\|^2_{\lL^2(Q_\eps)} \le \|\nabla \wtd{u}_\eps(t) \|^2_{\lL^2(Q_\eps)}$.
Thus, using the above estimates, Assumption~\eqref{ass5.1}, \eqref{eq:assump_g_c} in the energy inequality \eqref{s_energy}, we infer \eqref{eq:5.29}.
\end{proof}

In the following lemma we obtain some higher order estimates (on a formal level) for the martingale solution $\tue$, which will be used to obtain the higher order estimates for the processes $\ale$ and $\tble$, which will be further used later to prove the main result, Theorem~\ref{thm:main_thm} of this article.

\begin{lemma}
\label{lemma_higher_estimates_u}
Let Assumption~\ref{ass5.1} hold true and $\tue$ be a martingale solution of \eqref{eq:5.13}. Then, for $p > 2$ we have following estimates
\begin{equation}
\label{eq:hoe1}
\bbE\sup_{0\le s\le T} \|\tue(s)\|^p_{\LQeps}
 \le C_2\left(p, \widetilde{u}_0^\eps, \tfe,\widetilde{G}_\eps\right)\exp\left(K_p\, T\right)
\end{equation}
and
\begin{equation}
\label{eq:hoe2}
\E \int_0^T \|\tue(t)\|_{\LQeps}^{p-2}\|\tue(t)\|_{\rV_\eps}^2dt \le C_2\left(p, \widetilde{u}_0^\eps, \tfe,\widetilde{G}_\eps\right) \left[1 + K_{p}\,T\exp\left(K_{p}\,T\right) \right],
\end{equation}
where
\[
C_2\left(p, \widetilde{u}_0^\eps, \tfe, \widetilde{G}_\eps \right):=
\|\widetilde{u}_0^\eps\|^p_{\LQeps} + \nu^{-p/2}\|\tfe\|^p_{L^p(0,T; \rV'_\eps)}
   + \left(\frac{1}{4}p^2 (p-1) + \frac{K_1^2}{p}\right) \|\widetilde{G}_\eps\|^p_{L^p(0,T;\calT_2)},
\]
\[K_{p} := \left(\frac{K_1^2}{p}+p\right)\frac{(p-2)}{2},\]
and $K_1$ is a constant from the Burkholder-Davis-Gundy inequality.
\end{lemma}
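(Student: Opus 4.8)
The plan is to repeat the energy computation of Lemma~\ref{lemma_apriori_sto_tu}, but for the convex function $\xi\mapsto\|\xi\|^p_{\LQeps}=(\|\xi\|^2_{\LQeps})^{p/2}$ rather than $\|\xi\|^2_{\LQeps}$. Concretely, I would start from the It\^o identity \eqref{eqn-Ito} for $\|\tue(t)\|^2_{\LQeps}$ and then apply the one-dimensional It\^o formula to $x\mapsto x^{p/2}$ with the real semimartingale $x(t)=\|\tue(t)\|^2_{\LQeps}$. Writing $\|\cdot\|$ for $\|\cdot\|_{\LQeps}$, this yields for each $t\in[0,T]$
\[
\|\tue(t)\|^p + p\nu\int_0^t \|\tue\|^{p-2}\|\nabla\tue\|^2\,ds \le \|\tu_0^\eps\|^p + p\int_0^t \|\tue\|^{p-2}\langle\tfe,\tue\rangle_\eps\,ds + \frac{p(p-1)}{2}\int_0^t \|\tue\|^{p-2}\|\widetilde{G}_\eps\|^2_{\calT_2}\,ds + \mathcal{M}(t),
\]
where $\mathcal{M}(t)=p\int_0^t\|\tue\|^{p-2}(\tue,\widetilde{G}_\eps\,d\tWe)_{\LQeps}$ is a local martingale and the second drift term collects both the first-order contribution of $\|\widetilde{G}_\eps\|^2_{\calT_2}$ and the second-order It\^o correction $p(\tfrac p2-1)\|\tue\|^{p-4}\sum_j(\tue,\tge^j)^2$, estimated through $\sum_j(\tue,\tge^j)^2\le\|\tue\|^2\|\widetilde{G}_\eps\|^2_{\calT_2}$. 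The decisive point is that the nonlinear term is \emph{absent}: by the antisymmetry \eqref{eqn-tri-eps-prop2} the drift of $\|\tue\|^2_{\LQeps}$ already contains no $B_\eps$ contribution, so this moment estimate, unlike $\rV_\eps$-regularity bounds, never sees the nonlinearity.

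Next I would dispose of the forcing term by two Young inequalities. Bounding $\langle\tfe,\tue\rangle_\eps\le\|\tfe\|_{\rV^\ast_\eps}\|\tue\|_{\rV_\eps}$ and using the coercivity of $a_\eps$ on $\rV_\eps$ (equivalently the Poincar\'e-type equivalence $\|\nabla\cdot\|\simeq\|\cdot\|_{\rV_\eps}$, with $\eps$-independent constant since $\partial Q$ is fixed), a first Young inequality absorbs half of the dissipative integral into the left-hand side and leaves a term $\tfrac{p}{2\nu}\|\tue\|^{p-2}\|\tfe\|^2_{\rV^\ast_\eps}$; a second Young inequality with exponents $\tfrac{p}{p-2}$ and $\tfrac p2$ splits this into a $\|\tue\|^p$ term with a $\nu$-independent coefficient (which will feed Gronwall) and the forcing contribution $\nu^{-p/2}\|\tfe\|^p_{\rV^\ast_\eps}$ recorded in $C_2$. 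The It\^o correction term $\tfrac{p(p-1)}2\|\tue\|^{p-2}\|\widetilde{G}_\eps\|^2_{\calT_2}$ is treated identically, producing a further $\|\tue\|^p$ term and a $\|\widetilde{G}_\eps\|^p_{\calT_2}$ term.

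For the martingale $\mathcal{M}$, I would take the supremum over $[0,T]$ and apply the Burkholder--Davis--Gundy inequality exactly as in \eqref{BDG}: its quadratic variation is controlled by $p^2\int_0^T\|\tue\|^{2p-2}\|\widetilde{G}_\eps\|^2_{\calT_2}\,ds$, and factoring $\|\tue\|^{2p-2}=(\sup_{[0,T]}\|\tue\|^p)\,\|\tue\|^{p-2}$ followed by Young lets me absorb $\tfrac14\bbE\sup_{[0,T]}\|\tue\|^p$ into the left-hand side, while leaving a multiple of $\int_0^T\|\tue\|^{p-2}\|\widetilde{G}_\eps\|^2_{\calT_2}\,ds$ that is split once more as above. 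Taking the supremum and then expectation, collecting every $\|\tue\|^p$ term produces the Gronwall constant $K_p=(\tfrac{K_1^2}p+p)\tfrac{p-2}2$ and the genuinely data-dependent terms assemble into $C_2(p,\tu_0^\eps,\tfe,\widetilde{G}_\eps)$; Gronwall's lemma then gives \eqref{eq:hoe1}.

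Finally, \eqref{eq:hoe2} follows by returning to the inequality \emph{before} the supremum is taken: the half of the dissipative integral retained on the left is exactly $\tfrac{p\nu}2\int_0^T\|\tue\|^{p-2}\|\tue\|^2_{\rV_\eps}\,ds$ (after the Poincar\'e equivalence), and all remaining terms on the right are now bounded, upon taking expectations, by $C_2$ together with $K_pT$ times $\bbE\sup_{[0,T]}\|\tue\|^p$, which \eqref{eq:hoe1} has just estimated; dividing by the coefficient of the dissipative term yields the claimed bound $C_2[1+K_pT\exp(K_pT)]$. I expect the only real difficulty to be the bookkeeping of the Young and BDG constants so that they assemble precisely into the stated $C_2$ and $K_p$; the infinite-dimensional It\^o computation itself is formal here, and is made rigorous through the finite-dimensional Galerkin processes $\tue^{(N)}$ together with lower semicontinuity of norms, as explained before Lemma~\ref{lemma_apriori_sto_tu}.
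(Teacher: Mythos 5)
Your proposal is correct and follows essentially the same route as the paper: an It\^o expansion of $\|\tue\|^p_{\LQeps}$ (the paper applies the It\^o lemma directly to $F(x)=\|x\|^p_{\LQeps}$ rather than composing $x\mapsto x^{p/2}$ with the scalar semimartingale $\|\tue\|^2_{\LQeps}$, but the resulting drift, martingale and correction terms, including the $\tfrac{p(p-1)}{2}$ coefficient, coincide), followed by the vanishing of the trilinear term, Young inequalities splitting off $\nu^{-p/2}\|\tfe\|^p$ and $\|\widetilde{G}_\eps\|^p_{\calT_2}$, the Burkholder--Davis--Gundy estimate absorbing $\tfrac14\bbE\sup\|\tue\|^p$, Gronwall, and finally reading \eqref{eq:hoe2} off the retained dissipative integral. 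The only cosmetic differences are your use of two successive Young inequalities where the paper uses a single three-exponent Young inequality, and your explicit appeal to the $\rV_\eps$-norm equivalence where the paper simply identifies $\|\tue\|_{\rV_\eps}$ with $\|\nabla\tue\|_{\LQeps}$.
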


\begin{proof}
Let $F(x) = \|x\|^p_{\LQeps}$ then
\[
  \frac{\partial F}{\partial x} = \nabla F
   = p \|x\|^{p-2}_{\LQeps} x,
\]
and
\begin{equation}\label{eqn:2}
 \left| \frac{\partial^2 F}{\partial x^2} \right|
  \le p(p-1) \|x\|^{p-2}_{\LQeps}.
\end{equation}
Applying the It\^{o} lemma with $F(x)$ and process $\widetilde{u}_\eps$ for
$t \in [0,T]$, we have
\begin{align*}
& \|\tue(t)\|^p_{\LQeps} = \|\tue(0)\|^p_{\LQeps}
  + p \int_0^t \|\tue(s)\|_{\LQeps}^{p-2}
\langle -\nu \rA_\eps \tue(s) - B_\eps(\tue(s),\tue(s))
+ \widetilde{f}_\eps(s), \tue(s) \rangle_\eps\, ds \\
&\quad + p \int_0^t \|\tue(s)\|^{p-2}_{\LQeps} \left( \tue(s), \widetilde{G}_\eps(s) d\widetilde{W}_\eps(s) \right)_{\LQeps}
+ \frac{1}{2}\int_0^t \mathrm{Tr}
\left( \frac{\partial^2 F}{\partial x^2} (\widetilde{G}_\eps(s),\widetilde{G}_\eps(s))\right)\,ds .
\end{align*}
Using the fact that
$\langle B_\eps(\tue,\tue), \tue \rangle_\eps = 0$ and
$ \langle A_\eps \tue,\tue \rangle_\eps = \|\nabla \tue\|_{\LQeps}^2 $ we arrive at
\begin{align*}
&\|\tue(t)\|^p_{\LQeps} = \|\tue(0)\|^p_{\LQeps}
 - p\nu \int_0^t \|\tue(s)\|_{\LQeps}^{p-2} \| \nabla \tue(s) \|_{\LQeps}^2
 + p \int_0^t \|\tue(s)\|_{\LQeps}^{p-2} \langle \widetilde{f}_\eps(s), \tue(s) \rangle_\eps\, ds \\
&\quad + p \int_0^t \|\tue(s)\|^{p-2}_{H_\eps}
 \left( \tue(s), \widetilde{G}_\eps(s) d\widetilde{W}_\eps(s)
\right)_{\LQeps}
+ \frac{1}{2}
\int_0^t \mathrm{Tr}
\left( \frac{\partial^2 F}{\partial x^2} (\widetilde{G}_\eps(s),\widetilde{G}_\eps(s))\right)ds.
\end{align*}
Using \eqref{eqn:2} and the Cauchy-Schwarz inequality, we get
\begin{align*}
&\|\tue(t)\|^p_{\LQeps}  + p \nu \int_0^t \|\tue(s)\|_{\LQeps}^{p-2} \|\nabla \tue(s)\|_{\LQeps}^2 ds\\
&\;\; \le \|\tue(0)\|^p_{\LQeps}
 + p \int_0^t \|\tue(s)\|_{\LQeps}^{p-2} \|\widetilde{f}_\eps(s)\|_{\rV'_\eps} \|\tue(s)\|_{\rV_\eps}\,ds \\
&\quad + p \int_0^t \|\tue(s)\|_{\LQeps}^{p-2} \left( \tue(s), \widetilde{G}_\eps(s) d\widetilde{W}_\eps(s) \right)_\LQeps + \frac{p(p-1)}{2}
\int_0^t \|\tue(s)\|_{\LQeps}^{p-2} \|\widetilde{G}_\eps(s)\|^2_{\calT_2(\R^N; \rH_\eps)}\, ds,
\end{align*}
where we recall
\[
\|\widetilde{G}_\eps(s)\|^2_{\calT_2(\R^N; \rH_\eps)} = \sum_{j=1}^N \|\tge^j(s)\|^2_{\LQeps}.
\]
Using the generalised Young inequality $abc \le a^q/q + b^r/r + c^s/s$ (where
$1/q+1/r+1/s=1$) with $a = \sqrt{\nu} \|\tue\|_{\LQeps}^{p/2-1} \|\tue\|_{\rV_\eps}$,
$b=\|\tue\|_{\LQeps}^{p/2-1}$, $c = \frac{1}{\sqrt{\nu}}\|f_\eps\|_{\rV'_\eps}$ and
exponents $q=2, r=p, s=2p/(p-2)$ we get
\begin{equation}\label{equ:3}
 \nu\|\tue\|_{\LQeps}^{p-2}
 \|f_\eps\|_{\rV'_\eps}
\|\tue\|_{\rV_\eps}
\le \frac{\nu}{2} \|\tue\|_{\LQeps}^{p-2} \|\tue\|_{\rV_\eps}^2
 + \frac{1}{p\nu^{p/2}} \|\widetilde{f}_\eps\|^p_{\rV'_\eps} + \frac{p-2}{2p} \|\tue\|_{\LQeps}^p.
\end{equation}
Again using the Young inequality with exponents $p/(p-2)$, $p/2$ we get
\begin{equation}\label{equ:4}
\|\tue\|_{\LQeps}^{p-2}\|\widetilde{G}
_\eps\|^2_{\calT_2(\R^N; \rH_\eps)}
\le \frac{p-2}{p} \|\tue\|_{\LQeps}^p + \frac{p}{2} \|\widetilde{G}_\eps\|^p_{\calT_2(\R^N; \rH_\eps)}.
\end{equation}
Using \eqref{equ:3} and \eqref{equ:4} and recalling $\|\tue\|_{\rV_\eps} = \|\nabla \tue\|_\LQeps$, we obtain
\begin{equation}\label{equ:5}
\begin{aligned}
&\|\tue(t)\|^p_{\LQeps}
 + \frac{p \nu}{2}
\int_0^t \|\tue(s)\|_{\LQeps}^{p-2} \|\nabla \tue(s)\|_{\LQeps}^2 ds
 \\
& \le \|\tue(0)\|^p_{\LQeps}
+ \frac{p(p-2)}{2}
\int_0^t \|\tue(s)\|_{\LQeps}^p\,ds
 + \nu^{-p/2} \int_0^t \|\widetilde{f}_\eps(s)\|^p_{\rV'_\eps}\, ds
\\
&\; + \frac{1}{4}p^2(p-1) \int_0^t \|\widetilde{G}_\eps(s)\|^p_{\calT_2(\R^N; \rH_\eps)}\, ds  + p \int_0^t \|\tue(s)\|_{\LQeps}^{p-2}
 \left( \tue(s),\widetilde{G}_\eps(s) d\widetilde{W}_\eps(s) \right)_\LQeps.
\end{aligned}
\end{equation}
Since $\tue$ is a martingale solution of \eqref{eq:5.13} it satisfies the energy inequality \eqref{s_energy}, hence the real-valued random variable
\[
\mu_\eps(t) = \int_0^t \|\tue(s)\|_{\LQeps}^{p-2} \left( \tue(s), \widetilde{G}_\eps(s) d\widetilde{W}_\eps(s) \right)_\LQeps
\]
is a $\mathcal{F}_t$-martingale. Taking expectation both sides of \eqref{equ:5}
we obtain
\begin{equation}
\begin{aligned}
& \bbE \|\tue(t)\|^p_{\LQeps}
+ \frac{p \nu}{2}
\bbE \int_0^t \|\tue(s)\|_{\LQeps}^{p-2} \|\nabla \tue(s)\|_{\LQeps}^2 ds \\
& \qquad \le
\|\tue(0)\|^p_{\LQeps}
+ \frac{p(p-2)}{2} \E \int_0^t \|\tue(s)\|_{\LQeps}^p ds
+ \nu^{-p/2} \int_0^t \|\widetilde{f}_\eps(s)\|^p_{\rV'_\eps}\,ds \\
& \qquad \quad + \frac{1}{4}p^2 (p-1) \int_0^t \|\widetilde{G}_\eps(s)\|^p_{\calT_2(\R^N; \rH_\eps)}\,ds.
\end{aligned}
\end{equation}
Therefore, by the Gronwall lemma we obtain
\[
\bbE\|\tue(t)\|^p_{\LQeps} \le C\left(\widetilde{u}_0^\eps, \tfe,\widetilde{G}_\eps\right)\exp\left( p\frac{(p-2)t}{2}\right),
\]
where
\[
C\left(\widetilde{u}_0^\eps, \tfe,\widetilde{G}_\eps\right) := \|\widetilde{u}_0^\eps\|_{\LQeps}^p
+ \nu^{-p/2} \|f\|^p_{L^p(0,T; \rV'_\eps)}
  + \frac{1}{4} p^2(p-1)
\| \widetilde{G}_\eps\|^p_{L^p(0,T;\calT_2(\R^N; \rH_\eps))} .
\]
By Burkholder-Davis-Gundy inequality, we have
\begin{align}\label{equ:9}
\bbE & \left(\sup_{0\le s \le t}\left| \int_0^s \|\tue(\sigma)\|^{p-2}_{\LQeps}
  \left( \tue(\sigma), \widetilde{G}_\eps(\sigma) d\widetilde{W}_\eps(\sigma) \right)_\LQeps \right|\right) \nonumber \\
& \le K_1 \bbE \left( \int_0^t \|\tue(s)\|_{\LQeps}^{2p-4}
 \|\tue(s)\|^2_{\LQeps} \|\widetilde{G}_\eps(s)\|^2_{\calT_2(\R^N; \rH_\eps)}\,ds \right)^{1/2} \nn \\
&\le K_1 \bbE \left[ \sup_{0\le s\le t} \|\tue(s)\|_{\LQeps}^{p/2}
 \left(\int_0^t \|\tue(s)\|_{\LQeps}^{p-2} \|\widetilde{G}_\eps(s)\|^2_{\calT_2(\R^N; \rH_\eps)} ds  \right)^{1/2}
 \right] \nn\\
&\le
 \frac{1}{2} \bbE \sup_{0\le s \le t} \|\tue(s)\|_{\LQeps}^p
+ \frac{K^2_1}{2} \bbE \int_0^t \|\tue(s)\|_{\LQeps}^{p-2} \|\widetilde{G}_\eps(s)\|^2_{\calT_2(\R^N; \rH_\eps)}\, ds \nn\\
&\le
 \frac{1}{2} \bbE \sup_{0\le s \le t} \|\tue(s)\|^p_{\LQeps}
+ \frac{K_1^2}{2} \frac{(p-2)}{p} \bbE \int_0^t \|\tue(s)\|_{\LQeps}^p ds +
\frac{K_1^2}{p} \int_0^t \|\widetilde{G}_\eps(s)\|^p_{\calT_2(\R^N; \rH_\eps)}\,ds
\end{align}
where in the last step we have used the Young inequality with exponents $p/(p-2)$
and $p/2$.

\noindent Taking supremum over $0\le s\le t$ in \eqref{equ:5}
and using \eqref{equ:9} we get
\begin{align}
\label{equ:10}
\frac{1}{2} \bbE & \sup_{0 \le s\le t} \|\tue(s)\|^p_{\LQeps}
+ \frac{\nu}{p} \bbE \int_0^t \|\tue(s)\|_{\LQeps}^{p-2} \|\nabla \tue(s)\|^2_{\LQeps} ds \\
&\le
\|\tue(0)\|^p_{\LQeps}
+ \left( \frac{K_1^2}{p} + p \right) \frac{(p-2)}{2}
\int_0^t \bbE \sup_{0 \le s \le \sigma} \|\tue(s)\|_{\LQeps}^p d\sigma  \nn \\
&\qquad + \nu^{-p/2} \int_0^t \|\widetilde{f}_\eps(s)\|^p_{\rV'_\eps} ds + \left(\frac{1}{4}p^2(p-1)
+ \frac{K_1^2}{p}\right) \int_0^t \|\widetilde{G}_\eps(s)\|^p_{\calT_2(\R^N; \rH_\eps)}\,ds. \nn
\end{align}
Thus using the Gronwall lemma, we obtain
\[
\bbE\sup_{0\le s\le t} \|\tue(s)\|^p_{\LQeps}
 \le C_2\left(p, \widetilde{u}_0^\eps, \tfe,\widetilde{G}_\eps\right)\exp\left(K_p\,t\right),
\]
where $C_2\left(p, \widetilde{u}_0^\eps, \tfe,\widetilde{G}_\eps\right)$ and $K_p$ are the constants as defined in the statement of lemma. We deduce \eqref{eq:hoe2} from \eqref{equ:10} and \eqref{eq:hoe1}.
\end{proof}

In the following lemma we will use the estimates from previous lemma to obtain higher order estimates for $\alpha_\eps$ and $\tble$.

\begin{lemma}
Let $p > 2$. Let $\tue$ be a martingale solution of \eqref{eq:5.13} and Assumption~\ref{ass5.1} hold with the chosen $p$. Then, the processes $\ale$ and $\tble$ (as defined in \eqref{eq:notation_1}) satisfy the following estimates
\begin{equation}
\label{eq:hoe_alpha}
\E\sup_{t\in[0,T]}\|\ale(t)\|^p_{\LS} \le K(\nu, p)\exp\left(K_p\,T\right),
\end{equation}
and
\begin{equation}
\label{eq:hoe_beta}
\E\sup_{t\in[0,T]}\|\tble(t)\|^p_{\LQeps} \le \eps^{p/2}K(\nu, p)\exp\left(K_p\,T\right),
\end{equation}
where $K(\nu,p)$ is a positive constant independent of $\eps$ and $K_p$ is defined in Lemma~\ref{lemma_higher_estimates_u}.
\end{lemma}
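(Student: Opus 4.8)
The plan is to derive both estimates directly from the higher-order bound on $\tue$ obtained in Lemma~\ref{lemma_higher_estimates_u}, combined with the Pythagoras-type inequalities of Corollary~\ref{cor-Pythagoras} and the scaling property \eqref{eqn-vscaling}. The key observation is that the constant $C_2\left(p, \widetilde{u}_0^\eps, \tfe,\widetilde{G}_\eps\right)$ appearing in \eqref{eq:hoe1} is itself of order $\eps^{p/2}$ under Assumption~\ref{ass5.1}. This factor $\eps^{p/2}$ will exactly cancel the $\eps^{-p/2}$ produced by the scaling when passing from $\tale$ to $\ale$, giving an $\eps$-uniform bound for $\ale$, whereas for $\tble$ no scaling is needed and the factor $\eps^{p/2}$ survives.

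First I would record the pointwise (in $t$) inequalities. From Corollary~\ref{cor-Pythagoras} we have $\|\tale(t)\|^2_{\LQeps} \le \|\tue(t)\|^2_{\LQeps}$ and $\|\tble(t)\|^2_{\LQeps} \le \|\tue(t)\|^2_{\LQeps}$, since $\tale = \tMe\tue$ and $\tble = \tNe\tue$. The scaling property \eqref{eqn-vscaling} gives $\|\tale(t)\|^2_{\LQeps} = \eps\,\|\ale(t)\|^2_{\LS}$, whence
\[
\|\ale(t)\|^2_{\LS} \le \eps^{-1}\|\tue(t)\|^2_{\LQeps}, \qquad \|\tble(t)\|^2_{\LQeps} \le \|\tue(t)\|^2_{\LQeps}.
\]
Raising to the power $p/2$, taking the supremum over $[0,T]$ and then the expectation yields
\[
\E\sup_{t\in[0,T]}\|\ale(t)\|^p_{\LS} \le \eps^{-p/2}\,\E\sup_{t\in[0,T]}\|\tue(t)\|^p_{\LQeps},
\]
together with the analogous inequality for $\tble$ but without the prefactor $\eps^{-p/2}$.

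Next I would show that $C_2\left(p, \widetilde{u}_0^\eps, \tfe,\widetilde{G}_\eps\right) = O(\eps^{p/2})$ using Assumption~\ref{ass5.1}. By \eqref{eq:5.10} we have $\|\widetilde{u}_0^\eps\|^p_{\LQeps} = C_1^p\,\eps^{p/2}$, and by \eqref{eq:5.11} we have $\|\tfe\|^p_{L^p(0,T;\rV'_\eps)} \le C_2\,\eps^{p/2}$. The only point requiring a little care is the noise term, because the Hilbert--Schmidt norm enters through its $p$-th power: using \eqref{eq:HSnorm} together with the uniform-in-$j$ bound \eqref{eq:assump_g_c} (itself a consequence of \eqref{eq:5.12}),
\[
\|\widetilde{G}_\eps(s)\|^p_{\calT_2(\R^N;\rH_\eps)} = \Big(\sum_{j=1}^N\|\tge^j(s)\|^2_{\LQeps}\Big)^{p/2} \le (Nc\eps)^{p/2},
\]
so that $\|\widetilde{G}_\eps\|^p_{L^p(0,T;\calT_2)} \le T\,(Nc)^{p/2}\,\eps^{p/2}$. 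Hence each of the three terms defining $C_2$ is bounded by a constant depending only on $\nu$ and $p$ times $\eps^{p/2}$, and we may write $C_2 \le \widetilde{K}(\nu,p)\,\eps^{p/2}$.

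Finally, substituting this into \eqref{eq:hoe1} gives $\E\sup_{t}\|\tue(t)\|^p_{\LQeps} \le \widetilde{K}(\nu,p)\,\eps^{p/2}\exp(K_p\,T)$. Inserting this into the two inequalities of the first step, the factor $\eps^{-p/2}$ cancels exactly for $\ale$, producing \eqref{eq:hoe_alpha}, while for $\tble$ the factor $\eps^{p/2}$ is retained, producing \eqref{eq:hoe_beta}; one then takes $K(\nu,p)$ to be the larger of the two resulting constants to cover both statements simultaneously. The argument is essentially a careful bookkeeping of powers of $\eps$, and the single step where attention is genuinely required is the treatment of the $p/2$-power of the Hilbert--Schmidt norm, which is exactly where the bound $\|\tge^j(s)\|^2_{\LQeps} \le c\eps$ is invoked.
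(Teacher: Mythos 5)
Your proposal is correct and follows essentially the same route the paper intends: the paper's own proof is only a one-sentence sketch instructing the reader to repeat the arguments of Lemmas~\ref{lem: a priori M sto} and \ref{a priori N sto} using the operator properties, the scaling identity \eqref{eqn-vscaling} and the higher-order bound \eqref{eq:hoe1}, which is exactly the bookkeeping you carry out. Your explicit verification that $C_2\bigl(p,\widetilde{u}_0^\eps,\tfe,\widetilde{G}_\eps\bigr)=O(\eps^{p/2})$ — in particular the treatment of $\|\widetilde{G}_\eps\|^p_{L^p(0,T;\calT_2)}$ via \eqref{eq:assump_g_c} — is the one step the paper leaves implicit, and you handle it correctly.
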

\begin{proof}
The lemma can be proved following the steps of Lemma~\ref{lem: a priori M sto} and Lemma~\ref{a priori N sto} with the use of Lemma~\ref{lem.avg_op_vec_prop}, scaling property from Lemma~\ref{lem.vec_scaling}, the Cauchy-Schwarz inequality, Assumptions~\ref{ass5.1}, \ref{ass_sphere} and the estimates obtained in Lemma~\ref{lemma_higher_estimates_u}.
\end{proof}


\subsection{Tightness}
\label{sec:tightness_thin_film}
In this subsection we will prove that the family of laws induced by the processes $\alpha_\eps$ is tight on an appropriately chosen topological space $\mathcal{Z}_T$. In order to do so we will consider the following functional spaces for fixed $T > 0$:

\begin{itemize}
    \item $C([0,T];\rU^\ast)$ is the space of continuous functions $u:[0,T] \rightarrow \rU^\ast$ with the topology
$\mathbf{T}_1$ induced by the norm $\|u\|_{C[0,T];\rU^\ast} = \sup_{t\in [0,T]} \|u(t)\|_{\rU^\ast}$.

\item $L^2_{\mathrm{w}}(0,T; \rV)$ is the space $L^2(0,T; \rV)$ with the weak topology $\mathbf{T}_2$,

\item $L^2(0,T; \rH)$ is the space of measurable functions $u : [0,T] \to \mathrm{H}$ such that
\[
\|u\|_{L^2(0,T; \mathrm{H})} = \left(\int_0^T \| u(t)\|_\rH^2\,dt \right)^{\frac12} < \infty,
\]
with the topology $\mathbf{T}_3$ induced by the norm $\|u\|_{L^2(0,T; \mathrm{H})}$.

\item Let $\rH_w$ denote the Hilbert space $\rH$ endowed with the weak topology.

$C([0,T]; \mathrm{H}_w)$ is the space of weakly continuous functions $u: [0,T] \to \mathrm{H}$ endowed with the weakest topology $\mathbf{T}_4$ such that for all $h \in \mathrm{H}$ the mappings
\[
C([0,T]; \mathrm{H}_w) \ni u \to \left( u(\cdot), h \right)_{\mathrm{H}} \in C([0,T]; \R)
\]
are continuous. In particular $u_n \to u$ in $C([0,T]; \rH_w)$ iff for all $h \in \rH \colon$
\[\lim_{n \to \infty} \sup_{t \in [0,T]} \left|\left( u_n(t) - u(t), h \right)_\rH \right| = 0.\]
\end{itemize}
Let
\begin{equation}
\label{def:ZT}
\mathcal{Z}_T = C([0,T]; \mathrm{U}^\ast) \cap L^2_{\mathrm{w}}(0,T; \rV) \cap L^2(0,T; \rH) \cap C([0,T]; \mathrm{H}_w),
\end{equation}
and $\mathcal{T}$ be the supremum\footnote{$\mathcal{T}$ is the supremum of topologies $\mathbf{T}_1$, $\mathbf{T}_2$, $\mathbf{T}_3$ and $\mathbf{T}_4$, i.e. it is the coarsest topology on $\mathcal{Z}_T$ that is finer than each of $\mathbf{T}_1$, $\mathbf{T}_2$, $\mathbf{T}_3$ and $\mathbf{T}_4$} of the corresponding topologies.

\begin{lemma}
\label{lem Lip sto}
The set of measures $\left\{\mathcal{L}(\alpha_\eps),\, \eps \in (0,1] \right\}$ is tight on $\left(\mathcal{Z}_T, \mathcal{T}\right)$.
\end{lemma}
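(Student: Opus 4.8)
The plan is to invoke a standard tightness criterion for the space $\mathcal{Z}_T$ of the type used in \cite{[BM13]}: the family $\{\mathcal{L}(\alpha_\eps)\}$ is tight on $(\mathcal{Z}_T,\mathcal{T})$ provided (a) $\{\alpha_\eps\}$ is bounded in $L^2(\Omega;L^\infty(0,T;\rH))$ and in $L^2(\Omega;L^2(0,T;\rV))$, and (b) $\{\alpha_\eps\}$ satisfies the Aldous condition in $\rU^\ast$, i.e. for every $\eta,\delta>0$ there is $\theta_0>0$ such that for every family of $\bbF$-stopping times $(\tau_\eps)$ with $\tau_\eps\le T$ and every $0\le\theta\le\theta_0$ one has $\sup_\eps\bbP\bigl(\|\alpha_\eps((\tau_\eps+\theta)\wedge T)-\alpha_\eps(\tau_\eps)\|_{\rU^\ast}\ge\delta\bigr)\le\eta$. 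Item (a) is exactly \eqref{eq:alpha_bdd}, so the whole burden is the Aldous condition, which I would verify through the sufficient condition: it suffices to produce $\kappa>0$ and $C$ (independent of $\eps$ and of the stopping times) with $\bbE\|\alpha_\eps((\tau_\eps+\theta)\wedge T)-\alpha_\eps(\tau_\eps)\|_{\rU^\ast}\le C\theta^{\kappa}$ for the drift pieces and an analogous second-moment bound for the stochastic piece; condition (b) then follows by Chebyshev.

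Concretely, I would read off from the weak identity \eqref{eq:snse_mod} the representation in $\rU^\ast$
\[
\alpha_\eps(t)=\oMe\tu_0^\eps+\int_0^t\bigl[-\nu\calA\alpha_\eps(s)-\widehat B(\alpha_\eps(s))-\tfrac1\eps\,\widehat B_\eps^{\perp}(s)+\oMe\tfe(s)\bigr]ds+\widehat M_\eps(t),
\]
where $\widehat B(\alpha_\eps)\in\rU^\ast$ is the functional $\varphi\mapsto b(\alpha_\eps,\alpha_\eps,\varphi)$, $\widehat B_\eps^{\perp}$ is $\varphi\mapsto b_\eps(\tble,\tble,\test)$, and $\widehat M_\eps(t)=\oMe\int_0^t\wtd G_\eps\,d\tWe$. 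I would then bound the increment of each term over $[\tau_\eps,\tau_\eps+\theta]$. For the Stokes term, $\|\calA\alpha_\eps\|_{\rU^\ast}\le C\|\alpha_\eps\|_\rV$ (since $\rU\hookrightarrow\rV$), so its increment is $\le C\theta^{1/2}\bigl(\int_0^T\|\alpha_\eps\|_\rV^2\bigr)^{1/2}$, giving $\bbE[\cdot]\le C\theta^{1/2}$ by \eqref{eq:alpha_bdd}. For the planar nonlinearity, using $\rU\hookrightarrow\lL^\infty(Q)$ in two dimensions and antisymmetry of $b$, $\|\widehat B(\alpha_\eps)\|_{\rU^\ast}\le C\|\alpha_\eps\|_\rH\|\alpha_\eps\|_\rV$, whence the increment is controlled by $C\,\theta^{1/2}\,\sup_s\|\alpha_\eps\|_\rH\bigl(\int_0^T\|\alpha_\eps\|_\rV^2\bigr)^{1/2}$ and again $\bbE[\cdot]\le C\theta^{1/2}$ by Cauchy--Schwarz and \eqref{eq:alpha_bdd}. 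The forcing term is deterministic and, using $\rV^\ast\hookrightarrow\rU^\ast$ together with Hölder in time and Assumption~\ref{ass5.1}, contributes $O(\theta^{1-1/p})$. For the martingale part $\widehat M_\eps$, the Itô isometry (or the Burkholder inequality as in \eqref{BDG}) together with $\rH\hookrightarrow\rU^\ast$ and the bound $\|\oMe\tge^j\|_{\LS}^2\le\tfrac1\eps\|\tge^j\|_{\LQeps}^2\le c$ coming from \eqref{eq:2.2} and \eqref{eq:5.12} yields $\bbE\|\widehat M_\eps(\tau_\eps+\theta)-\widehat M_\eps(\tau_\eps)\|_{\rU^\ast}^2\le C\theta$.

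The one delicate term is the singularly scaled remainder $\tfrac1\eps\widehat B_\eps^{\perp}$, and I expect this to be the main obstacle. Here I would not use Poincaré to cancel $\eps$ naively, but instead estimate, via $\rU\hookrightarrow\lL^\infty(Q)$ and $\|\test\|_{\lL^\infty(Q_\eps)}=\|\varphi\|_{\lL^\infty(Q)}\le C\|\varphi\|_\rU$, the pointwise-in-time bound $\tfrac1\eps|b_\eps(\tble,\tble,\test)|\le\tfrac{C}{\eps}\|\tble\|_{\LQeps}\|\nabla\tble\|_{\LQeps}\|\varphi\|_\rU$, so that the increment in $\rU^\ast$ is at most $\tfrac{C}{\eps}\theta^{1/2}\bigl(\sup_s\|\tble\|_{\LQeps}\bigr)\bigl(\int_0^T\|\nabla\tble\|_{\LQeps}^2\bigr)^{1/2}$. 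Taking expectations and applying Cauchy--Schwarz together with the $\eps$-decay from Lemma~\ref{a priori N sto}, namely $\bbE\sup_s\|\tble\|_{\LQeps}^2\le C\eps$ and $\bbE\int_0^T\|\nabla\tble\|_{\LQeps}^2\le C\eps$, the prefactor $\tfrac1\eps$ is exactly absorbed: $\bbE[\cdot]\le\tfrac{C}{\eps}\theta^{1/2}(C\eps)^{1/2}(C\eps)^{1/2}=C\theta^{1/2}$, uniformly in $\eps$. Collecting the five estimates gives the Aldous condition with $\kappa=\min\{1/2,\,1-1/p\}$ (and $\gamma=2$ for the stochastic part), and together with \eqref{eq:alpha_bdd} the tightness criterion of \cite{[BM13]} yields the assertion.
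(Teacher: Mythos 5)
Your proposal is correct and follows the paper's proof in all essentials: the same tightness criterion (Corollary~\ref{corA.2.2}, i.e.\ the a priori bounds \eqref{eq:alpha_bdd} plus the Aldous condition in $\rU^\ast$ verified via Chebyshev), the same five-term decomposition of the increment of $\ale$ over $[\tau_\eps,\tau_\eps+\theta]$, and the same input estimates from Lemmas~\ref{lem: a priori M sto} and \ref{a priori N sto}. The only place you genuinely deviate is the singularly scaled term $\frac{1}{\eps}\,b_\eps(\tble,\tble,\test)$: you use the $(2,2,\infty)$ H\"older bound $|b_\eps(\tble,\tble,\test)|\le\|\tble\|_{\LQeps}\|\nabla\tble\|_{\LQeps}\|\varphi\|_{\lL^\infty(Q)}$ and let the two factors of $\eps^{1/2}$ coming from $\hE\sup_s\|\tble\|^2_{\LQeps}\le C\eps$ and $\hE\int_0^T\|\nabla\tble\|^2_{\LQeps}\le C\eps$ absorb the prefactor $1/\eps$, yielding a bound $C\theta^{1/2}$ that is uniform in $\eps$ but does not decay. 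The paper instead integrates by parts and applies H\"older with exponents $(2,3,6)$, invoking the anisotropic Ladyzhenskaya--Poincar\'e inequality \eqref{eqn-NepsL3} together with the scaling $\|\nabla\test\|_{\lL^6(Q_\eps)}=\eps^{1/6}\|\nablaS\varphi\|_{\lL^6(Q)}$, which produces the sharper estimate $c_3\,\eps^{2/3}\,\theta^{1/2}$ after division by $\eps$. For the Aldous condition, and hence for this lemma, your uniform bound is entirely sufficient; the extra $\eps^{2/3}$ decay obtained by the paper is what is actually needed later, in Lemma~\ref{lemma_convg_beta}, to show that this term vanishes in the limit, so your cruder estimate, while adequate here, could not be recycled there. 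The remaining four term estimates and the final Chebyshev argument are as in the paper.
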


\begin{proof}
Let $\wtd{u}_\eps(t)$, for some fixed $\eps > 0$ be a martingale solution of \eqref{eq:5.13}. Then by Remark~\ref{rem_mod_equation} for $\wtd{\alpha}_\eps$, $\wtd{\beta}_\eps$ and $\alpha_\eps$ as in \eqref{eq:notation_1}, we have for $\varphi \in \rU$, $t \in [0,T]$, $\mathbb{P}$-a.s.
\begin{align}
\label{eqn-start}
&\left(\wtd{\alpha}_\eps(t) - \wtd{\alpha}_\eps (s), \test\right)_\LQeps  = -\nu \int_s^t \left(\nabla \wtd{\alpha}_\eps(r), \nabla \test \right)_\LQeps dr
    \nonumber \\
  &\qquad - \int_s^t \left\langle \wtd{M}_\eps \left[ \wtd{\alpha}_\eps(r) \cdot \nabla \wtd{\alpha}_\eps(r) \right], \test \right\rangle_\eps dr - \int_s^t \left\langle\wtd{M}_\eps \left[\wtd{\beta}_\eps(r) \cdot \nabla \wtd{\beta}_\eps(r) \right], \test \right\rangle_\eps dr \nn \\
     &\qquad + \int_s^t \left\langle\wtd{M}_\eps \tfe(r), \test \right\rangle_\eps dr
     + \left(\int_s^t \wtd{M}_\eps\left[\wtd{G}_\eps(r)\,d\wtd{W}_\eps(r)\right], \test\right)_\LQeps\nonumber \\
  &\qquad =: I_1 + I_2 + \ldots + I_5.
\end{align}
The proof of lemma turns out to be  a direct application of Corollary~\ref{corA.2.2}. Indeed, by  Lemma~\ref{lem: a priori M sto}, assumptions  $(a)$ and $(b)$ of Corollary~\ref{corA.2.2} are satisfied and therefore,
it is sufficient to show that the sequence $\left(\alpha_\eps\right)_{\eps > 0}$ satisfies the Aldous condition $[\textbf{A}]$, see Definition~\ref{defnA.1.8},  in space $\rU^\ast$.

Let $\theta \in (0,T)$ and $\left(\tau_\eps\right)_{\eps > 0}$ be a sequence of stopping times such that $0 \le \tau_\eps \le \tau_\eps +\theta\le T$. We start by estimating each term in the right-hand-side of \eqref{eqn-start}. We will use the H\"older inequality, the scaling property from Lemma~\ref{lem.vec_scaling}, the Poincar\'{e} type inequality \eqref{NuV}, the Ladyzhenskaya type inequality from Corollary~\ref{cor-Lady-anisotropic}, identities from Lemma~\ref{lem:Reps_Sob} and the a priori estimates from Lemmas~\ref{lem: a priori M sto}, \ref{a priori N sto}.

In what follows, we will prove that each of the five process from equality \eqref{eqn-start} satisfies the Aldous condition $[\textbf{A}]$.
In order to help the reader, we will divide the following part of the proof into five parts.
\begin{itemize}
\item For the first term we obtain
\begin{align}
\label{est for I1}
\bbE \left|I_1^\eps(\tau_\eps + \theta) - I_1^\eps(\tau_\eps)\right| & = \nu \bbE \left|\int_{\tau_\eps}^{\tau_\eps+\theta} \left(\nabla \tale(s) , \nabla \test \right)_\LQeps\,ds\right|\nn \\
& = \nu \eps \bbE\left| \int_{\tau_\eps}^{{\tau_\eps}+\theta} \left(\nablaS \ale(s) , \nablaS \varphi \right)_\LS\right|\,ds \nn\\
& \le \nu\eps \|\nabla'\varphi\|_{\lL^2(Q)}
\E  \int_{\tau_\eps}^{\tau_\eps + \theta} \|\nabla'\alpha_\eps(s)\|_{\lL^2(Q)} ds \nn \\
 & \le \nu \eps \|\varphi\|_\rV \left(\bbE \int_{\tau_\eps}^{\tau_\eps + \theta}\|\nabla^\prime \alpha_\eps(s)\|^2_{\lL^2(Q)} ds\right)^{1/2} \theta^{1/2} \nn\\
& \le \eps \nu C\|\varphi\|_{\rV}\,\theta^{1/2} := c_1 \eps \cdot \theta^{1/2} \|\varphi\|_{\rV} .
\end{align}

\item Now we consider the first nonlinear term
\begin{align}
    \label{est for I2}
    \bbE & \left|I_2^\eps(\tau_\eps + \theta) - I_2^\eps(\tau_\eps)\right| = \bbE \left|\int_{\tau_\eps}^{{\tau_\eps}+\theta} \left\langle\left[\tale(s) \cdot \nabla \right]\tale(s), \test \right\rangle_\eps\,ds\right| \nn\\
    & = \bbE \left|\int_{\tau_\eps}^{{\tau_\eps}+\theta} \left(-\int_{Q_\eps}\left[\tale(s,\bx)\cdot\nabla\right][\test](\bx)\cdot \tale(s,\bx)\,d\bx\right)\,ds\right| \nn\\
& \le  \bbE \int_{\tau_\eps}^{{\tau_\eps}+\theta} \|\tale(s)\|_\LQeps \|\tale(s)\|_{\mathbb{L}^3(Q_\eps)}\left\|\nabla\test\right\|_{\mathbb{L}^6(Q_\eps)}\,ds \nn \\
& = \eps \bbE \int_{\tau_\eps}^{{\tau_\eps}+\theta}\|\ale(s)\|_\LS\|\ale(s)\|_{\mathbb{L}^3(\bbS)}\|\nablaS\varphi\|_{\lL^6(\bbS)}\,ds \nn \\
& \le \eps \left[\bbE \left(\sup_{s \in [0,T]} \|\ale(s)\|^2_\LS \right) \right]^{1/2} \left[\bbE \|\ale\|^2_{L^2(0,T; \mathbb{L}^3(\bbS))} \right]^{1/2}\|\varphi\|_{\mathbb{W}^{1,6}(\bbS)} \theta^{1/2} \nn \\
&\le \eps C \|\varphi\|_{\rU}\,\theta^{1/2} := c_2 \eps \cdot \theta^{1/2} \|\varphi\|_{\rU} .
\end{align}

\item Similarly for the other nonlinear term, we get
\begin{align}
    \label{est for I3}
    \bbE & \left|I_3^\eps(\tau_\eps + \theta) - I_3^\eps(\tau_\eps)\right| = \bbE \left|\int_{\tau_\eps}^{{\tau_\eps}+\theta} \left\langle\left[\tble(s) \cdot \nabla \right]\tble(s), \test \right\rangle_\eps\,ds\right| \nn \\
& = \bbE \left|\int_{\tau_\eps}^{{\tau_\eps}+\theta} \left(-\int_{Q_\eps}\left[\tble(s,\bx)\cdot\nabla\right][\test](\bx)\cdot \tble(s,\bx)\,d\bx\right)\,ds\right| \nn\\
&\le \bbE \int_{\tau_\eps}^{{\tau_\eps}+\theta} \|\tble(s)\|_\LQeps \|\tble(s)\|_{\mathbb{L}^3(Q_\eps)} \left\|\nabla\test\right\|_{\mathbb{L}^6(Q_\eps)}\,ds \nn \\
&\le \eps^{2/3} C \bbE \int_{\tau_\eps}^{{\tau_\eps}+\theta}\|\tble(s)\|_\LQeps\|\tble(s)\|_{\rV_\eps}\|\nablaS\varphi\|_{\lL^6(\bbS)}\,ds \nn \\
&\le \eps^{2/3} C \left[\bbE \left(\sup_{s\in[0,T]}\|\tble(s)\|^2_\LQeps\right)\right]^{1/2} \left[\bbE\|\tble\|^2_{L^2(0,T; \rV_\eps)}\right]^{1/2} \|\varphi\|_{\mathbb{W}^{1,6}(\bbS)}\,\theta^{1/2} \nn \\
&\le \eps^{5/3} C \|\varphi\|_{\rU}\,\theta^{1/2} =: c_3 \eps^{5/3} \cdot \theta^{1/2} \|\varphi\|_{\rU}.
\end{align}

\item Now for the term corresponding to the external forcing $\tfe$, we have using the independence of $\tMe \tfe$ on $x_3$ and assumption \eqref{eq:5.11}
\begin{align}
\label{est for I4}
\bbE  \left|I_4^\eps(\tau_\eps + \theta) - I_4^\eps(\tau_\eps)\right|& = \bbE \left|\int_{\tau_\eps}^{{\tau_\eps}+\theta} \left\langle\tMe \tfe(s), \test \right\rangle_\eps\,ds\right| \nn \\
&= \bbE \left|\int_{\tau_\eps}^{{\tau_\eps}+\theta} \left(\int_\bbS \inteps  [M_\eps \tfe](s, \bx)\varphi(\bx')\,d\bx'\,dx_3\right)\,ds\right| \nn \\
& = \eps \bbE\left|\int_{\tau_\eps}^{{\tau_\eps}+\theta} \left(\int_\bbS [\ocirc{M}_\eps \tfe](s,\bx') \varphi(\bx')\,d\bx'\right)\,ds\right|\nn \\
& \le \eps \bbE \int_{\tau_\eps}^{{\tau_\eps}+\theta} \|\ocirc{M}_\eps \tfe(s)\|_{\rV^\ast}\|\varphi\|_\rV\,ds \nn \\
& \le \sqrt{\eps} C \left[\bbE\|\tfe\|^2_{L^2(0,T; \rV_\eps^\ast)} \right]^{1/2}\|\varphi\|_{\rU}\,\theta^{1/2} \nn\\
& \le \eps C \|\varphi\|_{\rU}\,\theta^{1/2}  =: c_4 \eps \cdot \theta^{1/2} \|\varphi\|_{\rU}.
\end{align}

\item
At the very end  we are left to deal with the last term corresponding to the stochastic forcing. Using the independence of $M_\eps\tge^j$ from $x_3$, It\^o isometry, scaling (see Lemma~\ref{lem.vec_scaling}) and assumption \eqref{eq:5.12}, we get
\begin{align}
\label{est for I5}
& \bbE  \left|I_5^\eps(\tau_\eps + \theta) - I_5^\eps(\tau_\eps)\right|^2  = \bbE \left| \left( \int_{\tau_\eps}^{{\tau_\eps}+\theta} \tMe \left[ \widetilde{G}_\eps(s)\,d \tWe(s)\right], \test \right)_\LQeps\right|^2 \nn \\
& \qquad = \bbE \left| \int_{\tau_\eps}^{{\tau_\eps}+\theta} \left( \int_\bbS \inteps \sum_{j=1}^N M_\eps \left[\tge^j(s,\bx)\,d\beta_j(s)\right]\varphi(\bx')\,dx_3\,d\bx'\right)\right|^2 \nn\\
&\qquad= \eps \bbE \left|\int_{\tau_\eps}^{{\tau_\eps}+\theta} \sum_{j=1}^N \int_\bbS \ocirc{M}_\eps \left[\tge^j(s, \bx')d\beta_j(s)\right] \varphi(\bx')\,d\bx'\right|^2 \nn \\
&\qquad\le \eps \bbE  \left\|\sum_{j=1}^N \int_{\tau_\eps}^{{\tau_\eps}+\theta} \ocirc{M}_\eps \left[\tge^j(s)d\beta_j(s)\right]\right\|_\LS^2 \|\varphi\|_\LS^2 \nn \\
&\qquad=  \E \left( \int_{\tau_\eps}^{{\tau_\eps}+\theta} \sum_{j=1}^N \eps\|\ocirc{M}_\eps \tge^j(s)\|_\LS^2\,ds \right) \|\varphi\|^2_\LS \nn \\
&\qquad=  \E \left(\int_{\tau_\eps}^{{\tau_\eps}+\theta} \sum_{j=1}^N \|\tMe\tge^j(s)\|^2_\LQeps ds \right)\|\varphi\|_\LS^2 \nn \\
&\qquad \le  \E  \left(\int_{\tau_\eps}^{{\tau_\eps}+\theta} \sum_{j=1}^N \|\tge^j(s)\|^2_\LQeps\,ds\right)\|\varphi\|_\LS^2 \nn \\
&\qquad \le \eps Nc \|\phi\|_{\rU}^2 \theta := c_5 \eps \cdot \theta \|\phi\|_{\rU}^2 .
\end{align}
\end{itemize}
After having proved what we had promised, we are ready to conclude the proof of Lemma \ref{lem Lip sto}. Since for every $t > 0$
\[ \left(\tale (t), \test \right)_\LQeps = \eps \left(\ale(t), \varphi \right)_\LS,\]
one has for $\varphi \in \rU$,
\begin{align}
\label{eq:aldous_2}
\left\|\ale(\tau_\eps + \theta) - \ale(\tau_\eps)\right\|_{\rU^\ast} & = \sup_{\|\varphi\|_{\rU} = 1} \left(\ale(\tau_\eps + \theta) - \ale(\tau_\eps), \varphi \right)_\LS \\
& = \frac{1}{\eps} \sup_{\|\phi\|_{\rU} = 1} \left(\tale(\tau_\eps + \theta) - \tale(\tau_\eps), \test \right)_\LQeps. \nn
\end{align}
Let us fix $\kappa > 0$ and $\gamma > 0$. By equality \eqref{eqn-start}, the sigma additivity property of probability measure and \eqref{eq:aldous_2}, we have
\begin{align*}
\mathbb{P} \left(\left\{\|\ale(\tau_\eps + \theta) - \ale(\tau_\eps)\|_{{\rU^\ast}} \ge \kappa \right\}\right) \le \frac{1}{\eps} \sum_{i=1}^5 \mathbb{P} \left(\left\{\sup_{\|\phi\|_{\rU} = 1} \left|J_i^\eps(\tau_\eps + \theta) - J_i^\eps(\tau_\eps)\right| \ge \kappa\right\}\right).
\end{align*}
Using the Chebyshev's inequality, we get
\begin{align}
\label{eq:aldous_3}
\mathbb{P} \left(\left\{\|\ale(\tau_\eps + \theta) - \ale(\tau_\eps)\|_{{\rU^\ast}} \ge \kappa \right\}\right) & \le \frac{1}{\kappa \eps} \sum_{i=1}^4 \bbE \left(\sup_{\|\phi\|_{\rU} = 1} \left|J_i^\eps(\tau_\eps + \theta) - J_i^\eps(\tau_\eps)\right| \right) \\
& \;\; + \frac{1}{\kappa^2 \eps} \bbE \left(\sup_{\|\phi\|_{\rU} = 1} \left|J_5^\eps(\tau_\eps + \theta) - J_5^\eps(\tau_\eps)\right|^2\right) \nn.
\end{align}
Thus, using estimates \eqref{est for I1}--\eqref{est for I5} in \eqref{eq:aldous_3}, we get
\begin{align}
\label{eq:aldous_4}
\mathbb{P}  \left(\left\{\|\ale(\tau_\eps + \theta) - \ale(\tau_\eps)\|_{{\rU^\ast}} \ge \kappa \right\}\right) \le \frac{1}{\kappa \eps} \eps \theta^{1/2} \left[c_1 + c_2 + c_3 \eps^{2/3} + c_4\right] + \frac{1}{\kappa^2 \eps} c_5 \eps \theta.
\end{align}
Let $\delta_i = \left(\dfrac{\kappa}{5 c_i} \gamma\right)^2$, for $i = 1, \cdots, 4$ and $\delta_5 = \dfrac{\kappa^2}{5 c_5} \gamma$. Choose $ \delta = \max_{i \in \{1, \cdots, 5\}}\delta_i$. Hence,
\[ \sup_{\eps > 0} \sup_{0 \leq \theta \leq \delta} \mathbb{P}  \left(\left\{\|\ale(\tau_\eps + \theta) - \ale(\tau_\eps)\|_{{\rU^\ast}} \ge \kappa \right\}\right) \leq \gamma.\]
Since $\ale$ satisfies the Aldous condition $[\textbf{A}]$ in ${\rU^\ast}$, we conclude the proof of Lemma~\ref{lem Lip sto} by invoking Corollary~\ref{corA.2.2}.
\end{proof}

\subsection{Proof of Theorem~\ref{thm:main_thm}}
\label{sec:proof-theorem}
For every $\eps > 0$, let us define the following intersection of spaces
\begin{equation}
\label{eq:y_eps space}
\mathcal{Y}_T^\eps = L^2_w(0,T; \rV_\eps) \cap C ([0,T]; \rH_\eps^w).
\end{equation}
Now, choose a countable subsequence $\left\{\eps_k\right\}_{k \in \N}$ converging to $0$. For this subsequence define a product space $\mathcal{Y}_T$ given by

\begin{equation}
\label{eqn-T_T space}
\mathcal{Y}_T = \Pi_{k \in \N} \mathcal{Y}_T^{\eps_k}\;,
\end{equation}
and a function $\eta \colon \Omega \to \mathcal{Y}_T$ by
\[\eta(\omega) = \left(\wtd{\beta}_{\eps_1}(\omega), \wtd{\beta}_{\eps_2}(\omega), \cdots, \right) \in \mathcal{Y}_T\;.\]
Now with this $\mathcal{Y}_T$-valued function we define a $\mathcal{Y}_T$-sequence
\[\eta_k \equiv \eta\,, \quad k \in \N\;.\]

\noindent Then by Lemma~\ref{lem Lip sto} and the definition of sequence $\eta_k$, the set of measures $\left\{\mathcal{L}\left(\alpha_{\eps_k}, \eta_k\right), k \in \N\right\}$ is tight on $\mathcal{Z}_T \times \mathcal{Y}_T$.

Thus, by the Jakubowski-Skorohod theorem\footnote{The space $\mathcal{Z}_T \times \mathcal{Y}_T \times C([0,T]; \R^N)$ satisfies the assumption of Theorem~\ref{thmA.1.4}. Indeed, since $\mathcal{Z}_T$ and $\mathcal{Y}_T^\eps$, $\eps > 0$ satisfies the assumptions (see \cite[Lemma~4.10]{[BD18]}) and $C([0,T]; \R^N)$ is a Polish space and thus automatically satisfying the required assumptions.} there exists a subsequence $\left(k_n\right)_{n \in \N}$, a probability space $(\what{\Omega}, \what{\mathcal{F}}, \hp)$ and, on this probability space, $\mathcal{Z}_T \times \mathcal{Y}_T \times C([0,T]; \R^N)$-valued random variables
 $(\what{u}, \what{\eta}, \widehat{W})$, $\left(\what{\alpha}_{\eps_{k_n}}, \what{\eta}_{k_n}, \widehat{W}_{\eps_{k_n}}\right), n \in \N$ such that
\begin{equation}
\label{eq:5.44}
\begin{split}
& \left(\what{\alpha}_{\eps_{k_n}}, \what{\eta}_{k_n}, \widehat{W}_{\eps_{k_n}}\right) \,\mbox{ has the same law as }\, \left({\alpha}_{\eps_{k_n}}, \eta_{k_n}, \widetilde{W}\right)\\
&\mbox{ on } \calB\left(\mathcal{Z}_T \times \mathcal{Y}_T \times C([0,T]; \R^N)\right)
\end{split}
\end{equation}
and
\begin{equation}
\label{eq:5.44a}
\left(\what{\alpha}_{\eps_{k_n}}, \what{\eta}_{k_n}, \widehat{W}_{\eps_{k_n}}\right) \to \left(\what{u}, \what{\eta}, \widehat{W} \right) \,\mbox{ in }\,\mathcal{Z}_T \times \mathcal{Y}_T \times C(0,T; \R^N),\quad \hp\mbox{-a.s.}
\end{equation}
In particular, using marginal laws, and definition of the process $\eta_k$, we have
\begin{equation}
    \label{eq:law_alp_bet}
    \mathcal{L}\left(\what{\alpha}_{\eps_{k_n}}, \what{\beta}_{\eps_{k_n}}\right) = \mathcal{L}\left(\alpha_{\eps_{k_n}}, \widetilde{\beta}_{\eps_{k_n}}\right) \; \mbox{ on } \calB\left(\mathcal{Z}_T \times \mathcal{Y}_T^{{\eps_{k_n}}}\right)
\end{equation}
where $\what{\beta}_{\eps_{k_n}}$ is the $k_n$th component of $\mathcal{Y}_T$-valued random variable $\what{\eta}_{k_n}$. We are not interested in the limiting process $\widehat{\eta}$ and hence will not discuss it further.

Using the equivalence of law of $\widehat{W}_{\eps_{k_n}}$ and $\widetilde{W}$ on $C([0,T];\R^N)$ for $n \in \N$ one can show that $\widehat{W}$ and $\widehat{W}_{\eps_{k_n}}$ are $\R^N$-valued Wiener processes (see \cite[Lemma~5.2 and Proof]{[BGJ13]} for details).

The convergence $\what{\alpha}_{\eps_{k_n}} \to \what{u}\,\mbox{ in }\,\mathcal{Z}_T$ (see \eqref{def:ZT}), $\hp\mbox{-a.s.}$ precisely means that
\begin{align*}
\what{\alpha}_{\eps_{k_n}} &\to \hu \, \mbox{ in }\,C([0,T];\rU^\ast)\,, \qquad \qquad
\what{\alpha}_{\eps_{k_n}} \rightharpoonup \hu \, \mbox{ in }\,L^2(0,T; \rV)\;,\\
\what{\alpha}_{\eps_{k_n}} &\to \hu \, \mbox{ in }\,L^2(0,T; \rH)\,, \qquad \qquad \; \;
\what{\alpha}_{\eps_{k_n}} \to \hu \, \mbox{ in }\,C([0,T]; \rH_w)\;,
\end{align*}
and
\[\widehat{W}_{\eps_{k_n}} \to \widehat{W}\, \mbox{in }\,C([0,T]; \R^N)\;.\]
Let us denote the subsequence $(\what{\alpha}_{\eps_{k_n}}, \what{\beta}_{\eps_{k_n}}, \widehat{W}_{\eps_{k_n}})$ again by $(\hae, \hbe, \widehat{W}_\eps)_{\eps \in (0,1]}$.\\
Note that since $\calB(\mathcal{Z}_T \times \mathcal{Y}_T \times C([0,T]; \R^N) \subset \calB(\mathcal{Z}_T) \times \mathcal{B}(\mathcal{Y}_T)\times \calB(C([0,T]; \R^N))$, the functions $\hu$, $\what{\eta}$ are $\mathcal{Z}_T$, $\mathcal{Y}_T$ Borel random variables respectively.

The following auxiliary result which is needed in the proof of Theorem~\ref{thm:main_thm}, cannot be deduced directly from the Kuratowski Theorem (see Theorem~\ref{thmB.1.1}).

\begin{lemma}
\label{lemma5.7}
Let $T > 0$ and $\mathcal{Z}_T$ be as defined in \eqref{def:ZT}. Then the following sets $C([0,T];\rH) \cap \mathcal{Z}_T$, $L^2(0,T; \rV) \cap \mathcal{Z}_T$ are Borel subsets of $\mathcal{Z}_T$.
\end{lemma}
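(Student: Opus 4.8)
The plan is to represent each of the two sets as a countable Boolean combination of sublevel sets of $\mathcal{T}$-lower semicontinuous functionals on $\mathcal{Z}_T$. The guiding observation is that, since $\mathcal{T}$ is finer than each of $\mathbf{T}_1,\ldots,\mathbf{T}_4$, any map that is continuous (resp.\ lower semicontinuous) for one of the $\mathbf{T}_i$ is automatically $\mathcal{T}$-continuous (resp.\ $\mathcal{T}$-lower semicontinuous), hence $\mathcal{B}(\mathcal{Z}_T)$-measurable; the whole issue is therefore to reduce the uncountable suprema hidden in the two membership conditions to countable ones.

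For $L^2(0,T;\rV)\cap\mathcal{Z}_T$ this is straightforward. Writing $E:=L^2(0,T;\rV)$, which is a separable Hilbert space, I would fix an orthonormal basis $(\phi_i)_{i\in\N}$ of $E$. Each functional $u\mapsto(u,\phi_i)_E$ is continuous for the weak topology $\mathbf{T}_2$, hence $\mathcal{T}$-continuous, so that $u\mapsto\|u\|_E^2=\sup_{k\in\N}\sum_{i=1}^k|(u,\phi_i)_E|^2$ is a countable supremum of $\mathcal{T}$-continuous functions and thus $\mathcal{T}$-lower semicontinuous. Consequently every sublevel set $\{u\in\mathcal{Z}_T:\|u\|_E\le N\}$ is $\mathcal{T}$-closed, and $L^2(0,T;\rV)\cap\mathcal{Z}_T=\bigcup_{N\in\N}\{u\in\mathcal{Z}_T:\|u\|_E\le N\}$ is Borel.

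For $C([0,T];\rH)\cap\mathcal{Z}_T$ the argument is more delicate. First I would record that, $\rH$ being a Hilbert space and $[0,T]$ compact, a weakly continuous $u\colon[0,T]\to\rH$ is strongly continuous if and only if it is uniformly continuous for $\|\cdot\|_{\rH}$, i.e.\ $\sup_{|s-t|\le\delta}\|u(s)-u(t)\|_{\rH}\to0$ as $\delta\to0$. Fixing an orthonormal basis $(e_i)_{i\in\N}$ of $\rH$ and writing $\|u(s)-u(t)\|_{\rH}^2=\sup_{k\in\N}\sum_{i=1}^k|(u(s)-u(t),e_i)_{\rH}|^2$, the key point is that for $u\in\mathcal{Z}_T\subset C([0,T];\rH_w)$ each scalar map $t\mapsto(u(t),e_i)_{\rH}$ is continuous, so for fixed $u$ and $k$ the function $(s,t)\mapsto\sum_{i\le k}|(u(s)-u(t),e_i)_{\rH}|^2$ is continuous on the compact band $\{(s,t)\in[0,T]^2:|s-t|\le1/n\}$ and therefore attains its supremum already over the rational pairs in that band. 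Hence, setting
\[
\Phi_n(u):=\sup\Big\{\textstyle\sum_{i=1}^k|(u(s)-u(t),e_i)_{\rH}|^2 : k\in\N,\ s,t\in\mathbb{Q}\cap[0,T],\ |s-t|\le 1/n\Big\},
\]
one has $\Phi_n(u)=\sup_{|s-t|\le1/n}\|u(s)-u(t)\|_{\rH}^2$, while each functional $u\mapsto(u(s),e_i)_{\rH}$ is $\mathcal{T}$-continuous (as $\mathbf{T}_4\subset\mathcal{T}$); thus $\Phi_n$ is a countable supremum of $\mathcal{T}$-continuous functionals, hence $\mathcal{T}$-lower semicontinuous and Borel. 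Since the $\Phi_n$ decrease in $n$, uniform continuity of $u$ is precisely $\lim_{n\to\infty}\Phi_n(u)=0$, and therefore
\[
C([0,T];\rH)\cap\mathcal{Z}_T=\bigcap_{m\in\N}\bigcup_{n\in\N}\{u\in\mathcal{Z}_T:\Phi_n(u)\le 1/m\}
\]
is Borel.

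I expect the main obstacle to be exactly this passage from the uncountable supremum over $(s,t)$ to a countable one. It cannot be carried out through $t\mapsto\|u(t)\|_{\rH}$ alone, which is merely lower semicontinuous in $t$, and this is the point at which the Kuratowski theorem gives no direct help; what rescues the argument is expressing the increments $u(s)-u(t)$ through the inner products $(u(s)-u(t),e_i)_{\rH}$, which are genuinely jointly continuous in $(s,t)$ thanks to the weak continuity built into $\mathcal{Z}_T$, so that a continuous two-variable integrand realises its supremum on a dense countable set.
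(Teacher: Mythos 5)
Your proof is correct, but it follows a genuinely different route from the paper's. The paper argues structurally: $C([0,T];\rH)$ and $C([0,T];\rU^\ast)\cap L^2(0,T;\rH)$ are Polish spaces, the natural injection of the former into the latter is continuous and hence Borel, so the Kuratowski theorem (Theorem~\ref{thmB.1.1}) gives that $C([0,T];\rH)$ is a Borel subset of $C([0,T];\rU^\ast)\cap L^2(0,T;\rH)$, and an auxiliary lemma on intersections of topological spaces (Lemma~\ref{lemmaB.1.2}) then transfers this to a Borel subset of $\mathcal{Z}_T$; the set $L^2(0,T;\rV)\cap\mathcal{Z}_T$ is treated the same way via the embedding $L^2(0,T;\rV)\hookrightarrow L^2(0,T;\rH)$. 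You instead exhibit both sets explicitly as countable Boolean combinations of closed sublevel sets of $\mathcal{T}$-lower semicontinuous functionals; the one genuinely delicate step, replacing the uncountable supremum over $\{|s-t|\le 1/n\}$ by a supremum over rational pairs, is handled correctly by exploiting the joint continuity of $(s,t)\mapsto\sum_{i\le k}\left|\left(u(s)-u(t),e_i\right)_\rH\right|^2$ for $u\in C([0,T];\rH_w)$, and the $\mathcal{T}$-continuity of the coordinate functionals follows from $\mathbf{T}_2$ and $\mathbf{T}_4$ being coarser than $\mathcal{T}$ exactly as you say. (Your aside that Kuratowski gives no direct help is accurate for the functional $t\mapsto\|u(t)\|_\rH$ you were contemplating, but the paper applies it to the space embedding rather than to any such functional.) As for what each approach buys: the paper's proof is shorter but leans on two imported results, whereas yours is self-contained, shows the two sets have low Borel complexity (countable unions and intersections of $\mathcal{T}$-closed sets), and as a by-product establishes the Borel measurability on $\mathcal{Z}_T$ of $u\mapsto\|u\|_{L^2(0,T;\rV)}$ and of the $\rH$-modulus of continuity, which is precisely what the lemma is invoked for when the a priori estimates are transported to the new probability space. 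One cosmetic remark: since $\mathcal{Z}_T\subset L^2_w(0,T;\rV)$ by definition, $L^2(0,T;\rV)\cap\mathcal{Z}_T$ coincides with $\mathcal{Z}_T$ as a set, so for that half of the statement both arguments really concern the measurability of the norm rather than of the set itself.
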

\begin{proof}
See Appendix~\ref{s:AppB.2}.
\end{proof}

By Lemma~\ref{lemma5.7}, $C([0,T]; \rH) \cap \calZ_T$ is a Borel subset of $\mathcal{Z}_T$. Since $\ale \in C([0,T]; \rH)$, $\hp$-a.s. and $\hae$, $\ale$ have the same laws on $\mathcal{Z}_T$, thus
\begin{equation}
\label{eq:5.45}
\mathcal{L}(\hae)\left(C([0,T]; \rH)\right) = 1\,, \quad \eps > 0\;,
\end{equation}
and from estimate \eqref{a priori M sto} and \eqref{eq:hoe_alpha}, for $p \in[2, \infty)$
\begin{equation}
\label{eq:5.46}
\sup_{\eps > 0} \what{\E} \left(\sup_{0 \le s \le T} \|\hae(s)\|^p_{\LS} \right) \le K_1(p).
\end{equation}
Since $L^2(0,T; \mathrm{V}) \cap \mathcal{Z}_T$ is a Borel subset of $\calZ_T$ (Lemma~\ref{lemma5.7}), $\ale$ and $\hae$ have same laws on $\calZ_T$; from \eqref{a priori M sto}, we have
\begin{equation}
\label{eq:5.47}
\sup_{\eps > 0} \what{\E} \left[ \int_0^T \|\nabla^\prime \hae(s)\|^2_{\LS}\,ds \right] \le K_2.
\end{equation}

Since the laws of $\eta_{k_n}$ and $\widehat{\eta}_{k_n}$ are equal on $\mathcal{Y}_T$,  we infer that  the corresponding marginal laws are also equal.  In other words, the laws on $\calB\left(\mathcal{Y}_T^{\eps_{k_n}}\right)$ of  $\mathcal{L}(\what{\beta}_{\eps_{k_n}})$ and $\mathcal{L}(\wtd{\beta}_{\eps_{k_n}})$ are equal for every ${k_n}$.

Therefore, from the estimates \eqref{eq:5.29} and \eqref{eq:hoe_beta} we infer for $p \in [2, \infty)$
\begin{equation}
\label{eq:beta_estimate_1}
\widehat{\E} \left(\sup_{0 \le s \le T} \|\hbe(s)\|^{p}_{\LQeps} \right) \le K_3(p) \eps^{p/2}, \qquad \eps \in (0,1]
\end{equation}
and
\begin{equation}
\label{eq:beta_estimate_2}
\widehat{\E} \left[ \int_0^T \|\nabla \hbe(s)\|^2_{\LQeps}\,ds \right] \le K_4 \eps, \qquad \eps \in (0,1].
\end{equation}

By inequality \eqref{eq:5.47} we infer that the sequence $(\hae)_{\eps > 0}$ contains a subsequence, still denoted by $(\hae)_{\eps > 0}$ convergent weakly (along the sequence $\eps_{k_n}$) in the space $L^2(\homega \times [0,T]; \rV)$. Since $\hae \to \hu$ in $\mathcal{Z}_T$ $\hp$-a.s., we conclude that $\hu \in L^2(\homega \times [0,T]; \rV)$, i.e.
\begin{equation}
\label{eq:5.48}
\what{\E} \left[\int_0^T \|\nabla^\prime \hu(s)\|^2_{\LS}\,ds \right] < \infty.
\end{equation}
Similarly by inequality \eqref{eq:5.46}, for every $p \in [2, \infty)$ we can choose a subsequence of $(\hae)_{\eps>0}$ convergent weak star (along the sequence $\eps_{k_n}$) in the space $L^p(\homega; L^\infty(0,T; \rH))$ and, using \eqref{eq:5.46}, we infer that
\begin{equation}
\label{eq:5.49}
\what{\E} \left(\sup_{0 \le s \le T} \|\hu(s)\|^{p}_{\LS} \right) < \infty.
\end{equation}

Using the convergence from \eqref{eq:5.44a} and estimates \eqref{eq:5.46}--\eqref{eq:5.49} we will prove certain term-by-term convergences which will be used later to prove Theorem~\ref{thm:main_thm}.

In order to simplify the notation, in the result below we write $\lim_{\eps \to 0}$ but we mean $\lim_{k_n\to \infty}$.

Before stating the next lemma, we recall the functional space $\Psi \colon$
\begin{equation}
    \label{eq:Psi}
    \Psi:= \left\{ \psi \in C_0^\infty(Q;\R^2) : \ddivS = 0 \mbox{ in } \bbS\right\}.
\end{equation}

\begin{lemma}
\label{lemma5.8}
For all $t \in [0,T]$ and $\varphi \in \Psi$, we have (along the sequence $\eps_{k_n}$)
\begin{itemize}
\item[(a)] $\lim_{\eps \to 0} \hE \left[\int_0^T \left|\left(\hae(t) - \hu(t),\varphi \right)_\LS\right|\,dt\right] = 0$,
\item[(b)] $\lim_{\eps \to 0} \left(\hae(0) - u_0,\varphi \right)_\LS = 0$,
\item[(c)] $\lim_{\eps \to 0} \hE\left[\int_0^T \left| \int_0^t \nu \left( \nablaS \hae(s) - \nablaS \hu(s), \nablaS\varphi \right)_\LS ds\right|\,dt\right] = 0$,
\item[(d)] $\lim_{\eps \to 0} \hE \left[ \int_0^T \left| \int_0^t \left\langle\left[\hae(s) \cdot \nabla^\prime \right]\hae(s) - \left[\hu(s) \cdot \nabla^\prime\right]\hu(s), \varphi \right\rangle ds\right|\,dt\right] = 0$,
\item[(e)] $\lim_{\eps \to 0} \hE \left[\int_0^T \left|\int_0^t \left\langle\ocirc{M}_\eps \tfe (s) - f(s), \varphi \right\rangle ds\right|\,dt\right] = 0$,
\item[(f)] $\lim_{\eps \to 0} \hE \left[\int_0^T\left|  \left( \int_0^t \left( \oMe\left[\wtd{G}_\eps(s)\,d\what{W}_\eps(s)\right]  - G(s)\,d\what{W}(s)\right), \varphi\right)_\LS \right|^2 dt\right] = 0$.
\end{itemize}
\end{lemma}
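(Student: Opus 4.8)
The plan is to derive all six limits from the almost sure convergences furnished by the Jakubowski--Skorokhod theorem in \eqref{eq:5.44a} --- namely $\hae \to \hu$ strongly in $L^2(0,T;\rH)$, weakly in $L^2(0,T;\rV)$, and in $C([0,T];\rH_w)$, together with $\what{W}_\eps \to \what{W}$ in $C([0,T];\R^N)$ --- and then to upgrade each almost sure limit to a limit in expectation by a uniform integrability (Vitali) argument based on the moment bounds \eqref{eq:5.46}--\eqref{eq:5.49}. Items (b) and (e) are deterministic and need no probabilistic input. For (b), since the initial datum is deterministic one has $\hae(0) = \oMe\tu_0^\eps$ almost surely, so the claim is exactly the weak convergence assumption \eqref{eq:initial_data_convg} tested against $\varphi$. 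For (e) the integrand is deterministic: the hypothesis \eqref{eq:convg_ext_force} gives the pointwise (in $s$) convergence $\langle\oMe\tfe(s),\varphi\rangle \to \langle f(s),\varphi\rangle$, while the duality $R_\eps^\ast=\eps\oMe$ in \eqref{eqn-4_19} together with the scaling of $R_\eps$ (Lemmas~\ref{lemma2.22} and \ref{lem:Reps_Sob}) yields $\|\oMe\tfe(s)\|_{\rV^\ast}\le\eps^{-1/2}\|\tfe(s)\|_{\rV_\eps^\ast}$, whence with \eqref{eq:5.11} the uniform bound $\int_0^T\|\oMe\tfe(s)\|_{\rV^\ast}^p\,ds\le C_2$; Vitali's theorem then gives $L^1(0,T)$ convergence of the integrands and hence the vanishing of the double integral.

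For (a) I would bound the inner quantity by $\|\varphi\|_\LS\sqrt{T}\,\|\hae-\hu\|_{L^2(0,T;\rH)}$, which tends to $0$ almost surely by the strong $L^2(0,T;\rH)$ convergence; uniform integrability in $\omega$ follows from \eqref{eq:5.46} (with $p>2$), so Vitali's theorem yields convergence of the expectation. For (c) I would test the weak convergence $\hae \rightharpoonup \hu$ in $L^2(0,T;\rV)$ against the fixed element $\mathbf{1}_{[0,t]}\nablaS\varphi$: for each fixed $t$ the inner integral converges to $0$ almost surely, its integrand is dominated uniformly in $t$ by $\nu\|\nablaS\varphi\|_\LS(\|\nablaS\hae\|_\LS+\|\nablaS\hu\|_\LS)$, so the outer integral in $t$ converges by dominated convergence; the resulting random variable is bounded in $L^2(\homega)$ by \eqref{eq:5.47}--\eqref{eq:5.48}, and a final Vitali argument closes the case.

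The nonlinear term (d) is handled by exploiting the antisymmetry of $b$ (the analogue of \eqref{eqn-tri-eps-prop1}). Writing $\langle[\hae\cdot\nablaS]\hae,\varphi\rangle = b(\hae,\hae,\varphi) = -b(\hae,\varphi,\hae)$, and likewise for $\hu$, the difference decomposes as
\[
 b(\hae,\hae,\varphi)-b(\hu,\hu,\varphi) = -b(\hae-\hu,\varphi,\hae)-b(\hu,\varphi,\hae-\hu),
\]
each summand being bounded, since $\varphi\in\Psi$ is smooth, by $\|\nablaS\varphi\|_{\lL^\infty(\bbS)}\,\|\hae-\hu\|_\LS\big(\|\hae\|_\LS+\|\hu\|_\LS\big)$. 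Integrating in $s$ and $t$ and using Cauchy--Schwarz in time, the expression is controlled by $\|\hae-\hu\|_{L^2(0,T;\rH)}\big(\|\hae\|_{L^2(0,T;\rH)}+\|\hu\|_{L^2(0,T;\rH)}\big)$, which tends to $0$ almost surely by the strong convergence of $\hae$; the higher moment bounds \eqref{eq:5.46} and \eqref{eq:5.49} give boundedness in $L^2(\homega)$, hence uniform integrability, and Vitali's theorem concludes. The key point is that testing against a fixed smooth $\varphi$ moves the derivative off the solution, so only the strong $L^2(0,T;\rH)$ convergence of $\hae$ (and not of its gradient) is needed.

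The stochastic term (f) is the main obstacle, because stochastic integration is not continuous with respect to the pathwise convergence $\what{W}_\eps\to\what{W}$. I would first reduce to scalar Wiener integrals with deterministic integrands, writing the paired integral as $\sum_{j=1}^N\int_0^t a^\eps_j(s)\,d\what{W}_{\eps,j}(s)$ with $a^\eps_j(s)=(\oMe\tge^j(s),\varphi)_\LS$, and analogously $\sum_j\int_0^t a_j\,d\what{W}_j$ with $a_j(s)=(g^j(s),\varphi)_\LS$. I would then split each term as $\int_0^t(a^\eps_j-a_j)\,d\what{W}_{\eps,j} + \big(\int_0^t a_j\,d\what{W}_{\eps,j}-\int_0^t a_j\,d\what{W}_j\big)$. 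For the first piece the It\^{o} isometry gives $\hE|\cdot|^2=\int_0^t|a^\eps_j-a_j|^2\,ds$, which vanishes since the weak convergence $\oMe\tge^j(s)\rightharpoonup g^j(s)$ gives pointwise convergence of $a^\eps_j$, while \eqref{eq:5.12} (through \eqref{eq:assump_g_c} and the scaling of Lemma~\ref{lem.vec_scaling}) provides the uniform bound needed for dominated convergence in $L^2(0,T)$. For the second piece, with the now-fixed deterministic integrand $a_j\in L^2(0,T)$, I would approximate $a_j$ by step functions: for step integrands the integral is a finite linear combination of increments of $\what{W}_{\eps,j}$ and converges by the uniform convergence $\what{W}_\eps\to\what{W}$, while the approximation error is estimated uniformly in $\eps$ by the It\^{o} isometry (all $\what{W}_\eps$ being standard Wiener processes), yielding convergence in $L^2(\homega)$ uniformly in $t$. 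Summing over $j$ and integrating in $t$ over $[0,T]$ gives the claim.
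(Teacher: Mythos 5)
Your proof is correct, and it follows the paper's overall strategy---extract almost sure convergence of each term from the Skorokhod convergences \eqref{eq:5.44a} and upgrade to convergence in expectation via Vitali's theorem using the moment bounds \eqref{eq:5.46}--\eqref{eq:5.49}---but several terms are handled by genuinely different routes. For (a) and (c) the paper works from $\hae\to\hu$ in $C([0,T];\rH_w)$ (for (c) after integrating by parts onto $\Delta'\varphi$), whereas you use the strong $L^2(0,T;\rH)$ convergence and the weak $L^2(0,T;\rV)$ convergence directly; both are valid. For (d) the paper splits the difference so that one piece pairs the weakly convergent $\nabla'(\hae-\hu)$ against the strongly convergent $\hae\varphi$, while your use of the antisymmetry $b(u,v,\varphi)=-b(u,\varphi,v)$ (the analogue of \eqref{eqn-tri-eps-prop1}) puts the derivative entirely on the fixed smooth $\varphi$ and reduces everything to the strong $L^2(0,T;\rH)$ convergence---a cleaner and more elementary argument. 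The most substantive difference is (f): the paper's displayed chain \eqref{eq:5.54}--\eqref{eq:5.58} only establishes the vanishing of $\hE\bigl|\bigl(\int_0^t[\oMe\wtd{G}_\eps(s)-G(s)]\,d\what{W}(s),\varphi\bigr)_\LS\bigr|^2$, i.e.\ with both integrands against the \emph{same} Wiener process $\what{W}$, and does not explicitly justify replacing $\what{W}_\eps$ by $\what{W}$ as the statement requires; your extra step---approximating the deterministic limit integrand by step functions, using the uniform convergence $\what{W}_\eps\to\what{W}$ in $C([0,T];\R^N)$ for the step part and the It\^{o} isometry (uniformly in $\eps$) for the approximation error---supplies exactly this missing bridge. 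Your treatment of (e) likewise records the uniform bound $\int_0^T\|\oMe\tfe(s)\|^p_{\rV^\ast}\,ds\le C_2$ needed for the Vitali step, which the paper leaves implicit.
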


\begin{proof}
Let us fix $\phi \in \rU$.

\noindent \textbf{(a)}
We know that $\hae \to \hu$ in $\calZ_T$. In particular,
\[
 \hae \rightarrow \what{u} \quad \text{ weakly in }\, C([0,T]; \rH)\quad \hp\text{-a.s.}.
\]
Hence, for $t \in [0,T]$
\begin{equation}
\label{eq:5.51}
\lim_{\eps \to 0} \left(\hae(t),\phi \right)_\LS = \left(\hu(t), \phi\right)_\LS\;, \quad \hp\text{-a.s.}.
\end{equation}
Since by \eqref{eq:5.46}, for every $\eps > 0$, $\sup_{t\in[0,T]}\|\hae(s)\|^2_\LS \le K_1(2)$, $\hp$-a.s., using the dominated convergence theorem we infer that
\begin{equation}
\label{eq:convg1}
\lim_{\eps \to 0} \int_0^T \left|\left(\hae(t) - \hu(t), \phi\right)_\LS\right| dt = 0\,,\quad \hp\text{-a.s.}
\end{equation}
By the H\"older inequality, \eqref{eq:5.46} and \eqref{eq:5.49} for every $\eps > 0$ and every $r > 1$
\begin{equation}
\label{eq:convg2}
\hE \left[\left|\int_0^T \|\hae(t) - \hu(t)\|_\LS dt\right|^r\right] \le c \hE \left[\int_0^T\left(\|\hae(t)\|_\LS^{r} + \|\hu(t)\|^{r}_\LS\right)dt \right] \le cTK_1(r),
\end{equation}
where $c$ is some positive constant. To conclude the proof of assertion $(a)$ it is sufficient to use \eqref{eq:convg1}, \eqref{eq:convg2} and the Vitali's convergence theorem.

\noindent \textbf{(b)} Since $\hae \to \hu$ in $C([0,T]; \rH_w)$ $\hp$-a.s. we infer that
\begin{equation}
\label{test-01}
    \left(\hae(0),\phi \right)_\LS \to \left(\hu(0),\phi \right)_\LS\,, \quad \hp\text{-a.s.}.
\end{equation}
Also, note that by condition~\eqref{eq:initial_data_convg} in Assumption~\ref{ass_sphere}, ${\alpha}_\eps(0) = \oMe \tue(0)=\oMe \widetilde{u}_0^\eps$ converges weakly to $u_0$ in $\mathbb{L}^2(\bbS)$.\\
On the other hand, by \eqref{eq:5.44} we infer that the laws of $\hae(0)$ and $\alpha_\eps(0)$ on $\rH$ are equal. Since $\ale(0)$ is a constant random variable on the old probability space, we infer that $\hae(0)$ is also a constant random variable (on the new probability space) and hence, by \eqref{eq:5.4} and \eqref{eq:notation_1}, we infer that $\hae(0) = \oMe \tu_0^\eps$ almost surely (on the new probability space). Therefore we infer that
\[\left(\what{u}(0), \varphi \right)_{\LS} = \left(u_0, \varphi \right)_{\LS},\]
concluding the proof of assertion $(b)$.

\noindent \textbf{(c)} Since $\hae \to \hu$ in $C([0,T]; \rH_w)$ $\hp$-a.s.,
\begin{align}
\label{eq:5.52}
& \lim_{\eps \to 0}{\nu}\int_0^t \left(\nablaS\hae(s), \nablaS\varphi \right)_\LS\,ds  = - \lim_{\eps \to 0} \nu \int_0^t \left(\hae (s), \Delta' \varphi \right)_\LS\,ds \\
&
\quad= - {\nu}\int_0^t \left(\hu(s), \Delta' \varphi \right)_\LS\,ds
 = \nu \int_0^t \left( \nabla^\prime \what{u}(s), \nabla^\prime \varphi \right)_\LS\,ds. \nn
\end{align}
The Cauchy-Schwarz inequality and estimate \eqref{eq:5.47} infer that for all $t \in [0,T]$ and $\eps \in (0,1]$
\begin{align}
\label{eq:convg3}
\hE & \left[\left|\int_0^t \nu \left(\nablaS \hae(s), \nablaS\varphi\right)_\LS\,ds\right|^2\right]\\
& \quad \le \nu^2 \|\nabla^\prime \varphi\|_\LS^2 \hE \left[\int_0^t \|\nabla^\prime \hae(s)\|^2_\LS\,ds\right] \le c K_2 \nn
\end{align}
for some constant $c > 0$. By \eqref{eq:5.52}, \eqref{eq:convg3} and the Vitali's convergence theorem we conclude that for all $t \in [0,T]$
\[
\lim_{\eps \to 0} \hE\left[ \left| \int_0^t \nu \left(\nablaS \hae(s) - \nablaS \hu(s),  \varphi \right)_\LS\,ds\right|\right] = 0.
\]
Assertion $(c)$ follows now from \eqref{eq:5.47}, \eqref{eq:5.48} and the dominated convergence theorem.

\noindent \textbf{(d)} For the non-linear term using the Sobolev embedding $\hH^2(Q) \hookrightarrow \lL^\infty(Q)$, we have
\begin{align}
\label{eq:5.53}
&\left| \int_0^{t} \left\langle\left[\hae(s)  \cdot \nablaS \right]\hae(s), \varphi \right)_\LS\,ds - \int_0^t \left\langle\left[\widehat{u}(s) \cdot \nablaS \right]\widehat{u}(s), \varphi \right\rangle\,ds\right| \\
&\le \left|\int_0^t \int_\bbS \left[\left(\hae(s,x) - \widehat{u}(s,x) \right) \cdot \nablaS \widehat{u}(s,x) \right]\cdot \varphi(x) \,dx\,ds\right| \nn \\
&\quad + \left|\int_0^t \int_\bbS \left[\hae(s,x) \cdot \nablaS \left(\hae(s,x) - \widehat{u}(s,x)\right)\right] \cdot \varphi(x) \,dx\,ds\right| \nn \\
& \le \|\hae - \widehat{u}\|_{L^2(0,T; \rH)}\|\widehat{u}\|_{L^2(0,T; \rV)}\|\varphi\|_{\hH^2(\bbS)} \nn \\
& \quad + \left|\int_0^t \left(\nablaS \left(\hae(s,x) - \widehat{u}(s,x)\right), \hae(s)\varphi\right)_\LS ds\right|. \nn
\end{align}
The first term converges to zero as $\eps \to 0$, since $\hae \to \hu$ strongly in $L^2(0,T; \rH)$ $\hp$-a.s., $\widehat{u} \in L^2(0,T; \rV)$ and the second term converges to zero too as $\eps \to 0$ because $\hae \to \widehat{u}$ weakly in $L^2(0,T; \rV)$. Using the H\"older inequality, estimates \eqref{eq:5.46} and the embedding $\hH^2(\bbS) \hookrightarrow \mathbb{L}^\infty(\bbS)$ we infer that for all $t \in [0,T]$, $\eps \in (0,1]$, the following inequalities hold
\begin{align}
\label{eq:convg4}
\hE & \left[\left|\int_0^t \left\langle\left[\hae(s)\cdot \nablaS\right]\hae(s),\varphi\right\rangle ds\right|^2\right] \nn \le \hE \left[\left(\int_0^t\|\hae(s)\|_\LS^2 \| \nablaS \varphi\|_{\mathbb{L}^\infty(\bbS)}\,ds\right)^2\right]\\
& \le c \|\nablaS \varphi\|_{\hH^2(\bbS)}\,t\, \hE \left[\int_0^t \|\hae(s)\|^4_\LS \,ds\right] \nn\\
& \le c \|\varphi\|_{\hH^3(\bbS)}\, t\,  \hE \left[\sup_{s \in [0,t]}\|\hae(s)\|^4_\LS \right] \le \widetilde{c} K_1(4).
\end{align}
By \eqref{eq:5.53}, \eqref{eq:convg4} and the Vitali's convergence theorem we obtain for all $t \in [0,T]$,
\begin{equation}
\label{eq:convg5}
\lim_{\eps \to 0} \hE \left[\left|\int_0^t \left\langle \left[\hae(s)\cdot\nablaS \right]\hae(s) - \left[\hu(s)\cdot \nablaS \right]\hu(s), \varphi \right\rangle\,ds\right|\right] = 0.
\end{equation}
Using the H\"older inequality and estimates \eqref{eq:5.46}, \eqref{eq:5.49}, we obtain for all $t \in [0,T]$, $\eps \in (0,1]$
\begin{align*}\hE \left[\left|\int_0^t \frac{1}{1+\eps}\left\langle\left[\hae(s)\cdot \nablaS\right] \hae(s) - \left[\hu(s) \cdot \nablaS \right]\hu(s), \varphi\right\rangle\,ds\right|\right] \nn \\
 \le c \|\varphi\|_{\hH^3(\bbS)}\hE \left[\sup_{t \in [0,T]}\|\hae(s)\|^2_\LS + \sup_{t \in[0,T]}\|\hu(s)\|^2_\LS\right] \le 2\widetilde{c} K_1(2)
\end{align*}
where $c, \widetilde{c} > 0$ are constants. Hence by \eqref{eq:convg5} and the dominated convergence theorem, we infer assertion $(d)$.

\noindent \textbf{(e)} Assertion $(e)$ follows because by Assumption~\ref{ass_sphere} the sequence $\left(\oMe \tfe\right)_{\eps \in(0,1]}$ converges in the sense of \eqref{eq:convg_ext_force} for every $\varphi \in \Psi$.

\noindent \textbf{(f)} By the definition of maps $\wtd{G}_\eps$ and $G$, we have
\begin{align*}
\int_0^t\left\|\left(\oMe\left[\widetilde{G}_\eps(s)\right] - G(s), \varphi \right)_\LS\right\|_{\mathcal{T}_2(\R^N; \R)}ds = \int_0^t \sum_{j=1}^N\left|\left(\oMe\tge^j(s) - g^j(s), \varphi\right)_\LS\right|^2\,ds.
\end{align*}
Since, by Assumption \ref{ass_sphere},  for every $j \in \{1, \cdots, N\}$, and $s \in [0,t]$, $\oMe \tge^j(s)$ converges weakly to $g^j(s)$ in $\LS$ as $\eps \to 0$, we get
\begin{equation}
\label{eq:5.54}
\lim_{\eps \to 0} \int_0^t \left\|\left(\oMe\left[\widetilde{G}_\eps(s)\right] - G(s), \varphi \right)_\LS\right\|^2_{\mathcal{T}_2(\R^N; \R)}ds = 0.
\end{equation}
By assumptions on $\tge^j$, we obtain the following inequalities for every $t \in [0,T]$ and $\eps \in (0,1]$
\begin{align}
\label{eq:5.55}
\hE & \left[\left|\int_0^t \left\|\left(\oMe\left[\widetilde{G}_\eps(s)\right] , \varphi \right)_\LS\right\|^2_{\mathcal{T}_2(\R^N;\R)}\,ds\right|^2\right] \nn\\
& \le c \|\varphi\|_\LS^4 \left[\int_0^t\|\oMe(\widetilde{G}_\eps(s))\|^4_{\mathcal{T}_2(\R^N;\LS)}\,ds\right] \nn \\
& = c \|\varphi\|^4_\LS \left[\int_0^t \dfrac{1}{\eps^2}\left(\sum_{j=1}^N \eps \|\oMe(\tge^j(s))\|^2_\LS\right)^2 ds\right] \nn \\
&  \le \dfrac{\widetilde{c}}{\eps^2}\|\varphi\|^4_\LS \left[\sum_{j=1}^N \int_0^t \|\tge^j(s)\|^4_\LQeps ds\right] \le K,
\end{align}
where $c, \widetilde{c} > 0$ are some constants. Using the Vitali's convergence theorem, by \eqref{eq:5.54} and \eqref{eq:5.55} we infer
\begin{equation}
\label{eq:5.56}
\lim_{\eps \to 0} \hE \left[ \int_0^t \left\|\left(\oMe\left[\widetilde{G}_\eps(s)\right] - G(s), \varphi \right)_\LS\right\|_{\mathcal{T}_2(\R^N; \R)}^2ds\right] = 0.
\end{equation}
Hence, by the properties of the It\^o integral we deduce that for all $t \in [0,T]$,
\begin{equation}
\label{eq:5.57}
\lim_{\eps \to 0} \hE \left[\left| \left( \int_0^t \left[\oMe \left[\widetilde{G}_\eps(s)\right] - G(s)\right] d \widehat{W}(s), \varphi \right)_\LS \right|^2 \right] = 0.
\end{equation}
By the It\^o isometry and assumptions on $\tge^j$ and $g^j$ we have for all $t \in [0,T]$ and $\eps \in (0,1]$
\begin{align}
\label{eq:5.58}
\hE & \left[\left|\left(\int_0^t \left[\oMe\left[\widetilde{G}_\eps(s)\right] - G(s)\right]d\widehat{W}(s) , \varphi \right)_\LS\right|^2\right] \nn \\
& = \hE \left[\int_0^t \left\|\left(\oMe \left[\widetilde{G}_\eps(s)\right] - G(s), \varphi \right)_\LS\right\|^2_{\mathcal{T}_2(\R^N; \R)}\,ds\right] \nn \\
& \le c \|\varphi\|^2_\LS \left[\sum_{j=1}^N \int_0^t  \left(\|\oMe(\tge^j(s))\|^2_\LS + \|g^j(s)\|^2_\LS\right)\,ds \right] \nn \\
& = c \|\varphi\|^2_\LS \left[\sum_{j=1}^N \int_0^t  \left(\frac{1}{\eps}\|\tMe \tge^j(s)\|^2_\LQeps + \|g^j(s)\|^2_\LS\right)\,ds \right] \nn \\
& \le c \|\varphi\|^2_\LS \left[\sum_{j=1}^N \int_0^t  \left(\frac{1}{\eps}\|\tge^j(s)\|^2_\LQeps + \|g^j(s)\|^2_\LS\right)\,ds \right] \le \widetilde{K},
\end{align}
where $c > 0$ is a constant. Thus, by \eqref{eq:5.57}, \eqref{eq:5.58} and the dominated convergence theorem assertion $(f)$ holds.
\end{proof}

\begin{lemma}
\label{lemma_convg_beta}
For all $t \in [0,T]$ and $\varphi \in \Psi$, we have (along the sequence $\eps_{k_n}$)
\[
\lim_{\eps \to 0} \hE \left[ \int_0^T \left| \frac{1}{\eps} \int_0^t \left\langle [\what{\beta}_\eps(s)\cdot \nabla]\what{\beta}_\eps(s), \test \right\rangle_\eps\,ds \right|dt\right] = 0.
\]
\end{lemma}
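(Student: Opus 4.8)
The plan is to beat the singular prefactor $1/\eps$ by extracting three independent sources of $\eps$-smallness: one from the scaling of $\nabla\test$ in $\mathbb{L}^6$, one from the anisotropic $\mathbb{L}^3$-estimate for $\hbe$, and one from the energy-type bounds on $\hbe$. First fix $\varphi \in \Psi$. Since $\Psi \subset \rU \subset \rV$, the properties of $R_\eps$ recorded after \eqref{eqn-4_18} give $\test = R_\eps\varphi \in \rV_\eps$, and because $\hbe \in L^2(0,T;\rV_\eps)$ $\hp$-a.s. we have $\hbe(s)\in\rV_\eps$ for a.e.\ $s$; moreover $\hbe$ inherits from $\wtd{\beta}_\eps = \wtd{N}_\eps\tue$ (through the equality of laws \eqref{eq:law_alp_bet} and the idempotency \eqref{eqn-Ntilde-squ}) the property $\hbe(s)=\wtd{N}_\eps\hbe(s)$. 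Hence property \eqref{eqn-tri-eps-prop1} applies and I would rewrite, exactly as in the tightness estimate \eqref{est for I3},
\[
\left\langle [\hbe(s)\cdot\nabla]\hbe(s),\test\right\rangle_\eps
 = -\int_{Q_\eps}\bigl[\hbe(s)\cdot\nabla\bigr]\test\cdot\hbe(s)\,d\bx .
\]

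Next I would apply H\"older's inequality with exponents $(2,6,3)$ to obtain
\[
\left|\int_{Q_\eps}\bigl[\hbe(s)\cdot\nabla\bigr]\test\cdot\hbe(s)\,d\bx\right|
 \le \|\hbe(s)\|_{\LQeps}\,\|\nabla\test\|_{\mathbb{L}^6(Q_\eps)}\,\|\hbe(s)\|_{\mathbb{L}^3(Q_\eps)} .
\]
Here Lemma~\ref{lem:Reps_Sob} (with $p=6$) gives $\|\nabla\test\|_{\mathbb{L}^6(Q_\eps)} = \eps^{1/6}\|\nablaS\varphi\|_{\lL^6(Q)}$, while the anisotropic bound \eqref{eqn-NepsL3} (whose proof rests on the Poincar\'e-type inequality \eqref{NuV}, valid since $\wtd{M}_\eps\hbe=0$) gives $\|\hbe(s)\|_{\mathbb{L}^3(Q_\eps)} \le (c_0\eps)^{1/2}\|\hbe(s)\|_{\rV_\eps}$. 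Collecting the powers of $\eps$, the prefactor becomes $\tfrac{1}{\eps}\cdot\eps^{1/6}\cdot\eps^{1/2}=\eps^{-1/3}$, so pointwise
\[
\frac{1}{\eps}\left|\left\langle [\hbe(s)\cdot\nabla]\hbe(s),\test\right\rangle_\eps\right|
 \le c_0^{1/2}\,\eps^{-1/3}\,\|\nablaS\varphi\|_{\lL^6(Q)}\,\|\hbe(s)\|_{\LQeps}\,\|\hbe(s)\|_{\rV_\eps} .
\]

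To finish, I would bound $\int_0^t \le \int_0^T$ and perform the outer $dt$ integration (producing a factor $T$), then take expectation and apply Cauchy--Schwarz first in time and then on $\homega$:
\[
\hE\int_0^T \|\hbe(s)\|_{\LQeps}\|\hbe(s)\|_{\rV_\eps}\,ds
 \le T^{1/2}\Bigl(\hE\sup_{0\le s\le T}\|\hbe(s)\|^2_{\LQeps}\Bigr)^{1/2}
            \Bigl(\hE\int_0^T\|\hbe(s)\|^2_{\rV_\eps}\,ds\Bigr)^{1/2}.
\]
By \eqref{eq:beta_estimate_1} with $p=2$ the first factor is $O(\eps^{1/2})$, and since $\|\hbe\|^2_{\rV_\eps}=\|\hbe\|^2_{\LQeps}+\|\nabla\hbe\|^2_{\LQeps}$, the estimates \eqref{eq:beta_estimate_1} and \eqref{eq:beta_estimate_2} make the second factor $O(\eps^{1/2})$ as well; hence this expectation is $O(\eps)$. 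Multiplying by the $\eps^{-1/3}$ prefactor leaves the bound $C\eps^{2/3}\to 0$, proving the claim.

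The calculation is routine once the exponent bookkeeping is arranged; the only genuine obstacle is to check that all three $\eps$-gains are simultaneously available, since it is precisely their overlap — the $\eps^{1/6}$ scaling of $\nabla R_\eps\varphi$, the $\eps^{1/2}$ gain in the anisotropic $\mathbb{L}^3$-estimate for $\hbe$, and the joint $O(\eps)$ energy bound for $\hbe$ — that overcomes the singular factor $1/\eps$ and even the extra time integration, leaving the comfortable margin $\eps^{2/3}$.
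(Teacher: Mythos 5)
Your proof is correct and follows essentially the same route as the paper's: both beat the $1/\eps$ prefactor by combining the anisotropic $\mathbb{L}^3$-estimate \eqref{eqn-NepsL3}, the $\eps^{1/6}$ scaling of $R_\eps\varphi$ in $\mathbb{L}^6(Q_\eps)$, and the $O(\eps)$ bounds \eqref{eq:beta_estimate_1}--\eqref{eq:beta_estimate_2}, arriving at the same $C\eps^{2/3}$ margin. The only (immaterial) difference is that you first integrate by parts via \eqref{eqn-tri-eps-prop1} and place the derivative on $\test$ (as in the tightness estimate \eqref{est for I3}), whereas the paper keeps $\nabla\hbe$ in the $\mathbb{L}^2$ slot and uses $\|\test\|_{\mathbb{L}^6(Q_\eps)}$ instead of $\|\nabla\test\|_{\mathbb{L}^6(Q_\eps)}$; the exponent bookkeeping is identical either way.
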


\begin{proof}
Let us fix $\varphi \in \Psi$. Using the H\"older inequality, inequality \eqref{eqn-NepsL3}, embedding $\hH^1(Q_\eps) \hookrightarrow \mathbb{L}^6(Q_\eps)$ and estimate \eqref{eq:beta_estimate_1}, we get for $t \in [0,T]$
\begin{align*}
\frac{1}{\eps}& \what{\E}  \left| \int_0^t \left\langle [\what{\beta}_\eps(s)\cdot \nabla]\what{\beta}_\eps(s), \test \right\rangle_\eps\,ds \right|
 = \frac{1}{\eps} \what{\E} \left| \int_0^t \left(\int_{Q_\eps} \left([\hbe(s,\bx)\cdot \nabla]\hbe(s,\bx)\right)\cdot [\test](\bx)\,d\bx \right)\,ds \right| \\
 &\le \frac{1}{\eps} \what{\E} \int_0^t \|\hbe(s)\|_{\mathbb{L}^3(Q_\eps)} \|\nabla \hbe(s)\|_\LQeps \|\test\|_{\mathbb{L}^6(Q_\eps)}\,ds\\
& \le \frac{1}{\eps} \what{\E} \left[\int_0^t \sqrt{c_0 \eps} \|\hbe(s)\|_{\rV_\eps} \|\nabla \hbe(s)\|_{\LQeps}\,ds \right]{\eps^{1/6}}\|\varphi\|_{\mathbb{L}^6(Q)} \\
& \le \eps^{-\frac13} \left(\what{\E} \int_0^T \|\hbe(s)\|_{\rV_\eps}^2\right)^{1/2} \left(\what{\E} \int_0^T \|\nabla \hbe(s)\|^2_{\LQeps}\,ds\right)^{1/2}\|\varphi\|_{\rV} \\
&\le \eps^{-\frac13} \sqrt{K_3(2) \eps} \sqrt{K_4 \eps} \|\varphi\|_{\rV} = \sqrt{K_3(2)}\sqrt{K_4} \eps^{\frac23} \|\varphi\|_{\rV}.
\end{align*}
Thus
\begin{equation}
\label{eq:convg_beta_3}
\lim_{\eps \to 0}\frac{1}{\eps}\hat{\E}  \left| \int_0^t \left( \left[\hbe(s) \cdot \nabla \right]\hbe(s), \test \right)_\LQeps\,ds \right| = 0\;.
\end{equation}
Hence we can conclude the proof of the lemma by using dominated convergence theorem, estimates \eqref{eq:beta_estimate_1}--\eqref{eq:beta_estimate_2} and convergence \eqref{eq:convg_beta_3}.
\end{proof}

Finally, to finish the proof of Theorem~\ref{thm:main_thm}, we will follow the methodology as in \cite{[Motyl14]} and introduce some auxiliary notations (along sequence $\eps_{k_n}$)
\begin{align}
\label{eq:5.59}
&\Lambda_\eps(\hae,\hbe,\widehat{W}_\eps, \varphi) :=  \left(\hae(0), \varphi \right)_\LS - \nu \int_0^t \left( \nablaS \hae(s), \nablaS \varphi \right)_\LS\,ds \\
&\qquad  - \int_0^{t} \left\langle \left[\hae(s) \cdot \nabla^\prime \right]\hae(s), \varphi \right\rangle\,ds + \int_0^{t} \left\langle \ocirc{M}_\eps \wtd{f}_\eps(s), \varphi \right\rangle\,ds \nn \\
&\qquad + \left(\int_0^t \ocirc{M}_\eps \left[\wtd{G}_\eps(s)\,d\what{W}_\eps(s) \right], \varphi \right)_\LS
- \frac{1}{\eps} \int_0^{t} \left\langle\left[\hbe(s) \cdot \nabla \right]\hbe(s), \test \right\rangle_\eps\,ds \nn\;,
\end{align}

\begin{align}
\label{eq:5.60}
&\Lambda(\hu,\widehat{W}, \varphi) :=  \left(\hu(0), \varphi \right)_\LS -\nu \int_0^t \left(\nablaS \hu(s), \nablaS \varphi \right)_\LS\,ds \\
&\qquad  - \int_0^{t} \left\langle\left[\hu(s) \cdot \nabla^\prime \right]\hu(s), \varphi \right\rangle_\LS\,ds + \int_0^{t} \left\langle f(s), \varphi \right\rangle\,ds \nn \\
&\quad + \left(\int_0^t G(s)\,d\what{W}(s), \varphi \right)_\LS \nn.
\end{align}

\begin{corollary}
\label{cor5.12}
Let $\varphi \in \Psi$. Then (along the sequence $\eps_{k_n}$)
\begin{equation}
\label{eq:5.61}
\lim_{\eps \to 0} \left\|\left(\hae(\cdot), \varphi \right)_\LS - \left(\hu(\cdot), \varphi\right)_\LS\right\|_{L^1(\what{\Omega} \times [0,T])} = 0
\end{equation}
and
\begin{equation}
\label{eq:5.62}
\lim_{\eps \to 0} \left\|\Lambda_\eps(\hae, \hbe, \widehat{W}_\eps, \varphi) - \Lambda(\hu, \widehat{W}, \varphi)\right\|_{L^1(\what{\Omega} \times [0,T])} = 0\;.
\end{equation}
\end{corollary}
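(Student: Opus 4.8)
The plan is to recognise that Corollary~\ref{cor5.12} is a bookkeeping consequence of the term-by-term convergences already established in Lemma~\ref{lemma5.8} and Lemma~\ref{lemma_convg_beta}: no new analytic estimate is needed, only a triangle-inequality assembly. The first claim~\eqref{eq:5.61} is literally assertion~(a) of Lemma~\ref{lemma5.8}, which already states $\lim_{\eps\to0}\hE\int_0^T|(\hae(t)-\hu(t),\varphi)_\LS|\,dt=0$, i.e. convergence to $0$ of the $L^1(\homega\times[0,T])$-norm. I would dispose of it in one line by invoking that assertion, and spend the rest of the argument on~\eqref{eq:5.62}.

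For~\eqref{eq:5.62} I would subtract the definitions~\eqref{eq:5.59} and~\eqref{eq:5.60} of $\Lambda_\eps$ and $\Lambda$ and group the result into six pieces: the initial-data difference $(\hae(0)-\hu(0),\varphi)_\LS$; the viscous difference $-\nu\int_0^t(\nablaS\hae(s)-\nablaS\hu(s),\nablaS\varphi)_\LS\,ds$; the nonlinear difference $-\int_0^t\langle[\hae(s)\cdot\nablaS]\hae(s)-[\hu(s)\cdot\nablaS]\hu(s),\varphi\rangle\,ds$; the deterministic-forcing difference $\int_0^t\langle\oMe\tfe(s)-f(s),\varphi\rangle\,ds$; the stochastic-forcing difference $\left(\int_0^t(\oMe[\wtd{G}_\eps(s)\,d\widehat{W}_\eps(s)]-G(s)\,d\widehat{W}(s)),\varphi\right)_\LS$; and finally the solitary term $-\frac1\eps\int_0^t\langle[\hbe(s)\cdot\nabla]\hbe(s),\test\rangle_\eps\,ds$, which has no counterpart in $\Lambda$. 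By the triangle inequality for the $L^1(\homega\times[0,T])$-norm it then suffices to send each of these six pieces to $0$ separately.

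Five of the six are immediate citations: the viscous, nonlinear, deterministic-forcing and stochastic-forcing differences are exactly the quantities controlled by Lemma~\ref{lemma5.8}(c), (d), (e), (f) respectively, while the solitary $\hbe$-term is precisely the object shown to vanish in Lemma~\ref{lemma_convg_beta}. The latter is the conceptual heart of the thin-film limit, since it is where the vertical mean-zero modes $\hbe$ carrying the genuinely three-dimensional part of the flow are shown to disappear from the averaged equation as $\eps\to0$; for this corollary, however, that work is already complete. The initial-data piece is the easiest: because $\hae(0)=\oMe\wtd{u}_0^\eps$ is a deterministic (constant) random variable, its $L^1(\homega\times[0,T])$-norm equals $T\,|(\hae(0)-\hu(0),\varphi)_\LS|$, which tends to $0$ by Lemma~\ref{lemma5.8}(b) (the same assertion also identifies $(\hu(0),\varphi)_\LS=(u_0,\varphi)_\LS$).

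The only genuinely non-mechanical point, and hence the step I would flag, is a mismatch of integrability exponents: Lemma~\ref{lemma5.8}(f) delivers convergence of the \emph{squared} $L^2(\homega\times[0,T])$-norm of the stochastic-forcing difference, whereas~\eqref{eq:5.62} is an $L^1$ statement. Since $\homega\times[0,T]$ carries a finite measure with total mass $\hp(\homega)\,T=T$, I would close this gap by Cauchy--Schwarz, bounding the $L^1$-norm of that piece by $(\hp(\homega)\,T)^{1/2}=T^{1/2}$ times its $L^2$-norm, so that the $L^2$ convergence of Lemma~\ref{lemma5.8}(f) upgrades to the required $L^1$ convergence. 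Collecting the six estimates then yields~\eqref{eq:5.62}, and together with~\eqref{eq:5.61} this completes the corollary.
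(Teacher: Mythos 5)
Your proposal is correct and takes essentially the same approach as the paper: \eqref{eq:5.61} is just Lemma~\ref{lemma5.8}(a) restated as an $L^1(\homega\times[0,T])$ statement, and \eqref{eq:5.62} is assembled by the triangle inequality from the term-by-term convergences of Lemma~\ref{lemma5.8}(b)--(f) and Lemma~\ref{lemma_convg_beta}. Your explicit Cauchy--Schwarz upgrade of the $L^2(\homega\times[0,T])$ convergence in Lemma~\ref{lemma5.8}(f) to the required $L^1$ convergence is a detail the paper leaves implicit in the phrase ``tends at least in $L^1$'', and your treatment of the initial-data term is likewise consistent with the paper's argument.
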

\begin{proof}
Assertion \eqref{eq:5.61} follows from the equality
\[\left\|\left(\hae(\cdot), \varphi \right)_\LS - \left(\hu(\cdot), \varphi\right)_\LS\right\|_{L^1(\what{\Omega} \times [0,T])} = \hE \left[ \int_0^T \left|\left(\hae(t) - \hu(t), \varphi \right)_\LS\right|\,dt\right]\]
and Lemma~\ref{lemma5.8} $(a)$. To prove assertion \eqref{eq:5.62}, note that by the Fubini Theorem, we have
\[
\left\|\Lambda_\eps(\hae, \hbe, \widehat{W}_\eps, \varphi) - \Lambda(\hu, \widehat{W}, \varphi)\right\|_{L^1(\what{\Omega} \times [0,T])} =  \int_0^T \hE \left|\Lambda_\eps(\hae, \hbe, \widehat{W}_\eps, \varphi)(t) - \Lambda(\hu, \widehat{W}, \varphi)(t)\right| dt.
\]
To conclude the proof of the corollary, it is sufficient to note that by Lemma~\ref{lemma5.8} $(b)-(f)$ and Lemma~\ref{lemma_convg_beta}, each term on the right hand side of \eqref{eq:5.59} tends at least in $L^1(\what{\Omega} \times [0,T])$ to the corresponding term (to zero in certain cases) in \eqref{eq:5.60}.
\end{proof}

\begin{proof}[Conclusion of proof of Theorem~\ref{thm:main_thm}] Let us fix $\varphi \in \Psi$. Since $\ale$ is a solution of \eqref{eq:snse_mod} for all $t \in [0,T]$
\[\left(\ale(t), \varphi \right)_\LS = \Lambda_\eps(\ale, \tble, \widetilde{W}_\eps, \varphi)(t)\,, \qquad \mathbb{P}\text{-a.s.}\]
In particular,
\[
\int_0^T \E \left|\left(\ale(t),\varphi\right)_\LS - \Lambda_\eps(\ale, \tble, \widetilde{W}, \varphi)(t)\right|\,dt = 0.
\]
Since $\mathcal{L}(\ale, \eta_{k_n}, \widetilde{W}_\eps) = \mathcal{L}(\hae, \what{\eta}_{k_n}, \widehat{W}_\eps)$ on $\mathcal{B}\left(\calZ_T \times \mathcal{Y}_T \times C([0,T]; \R^N)\right)$ (along the sequence $\eps_{k_n}$),
\[
\int_0^T \hE \left|\left(\hae(t),\varphi\right)_\LS - \Lambda_\eps(\hae, \hbe, \widehat{W}_\eps, \varphi)(t)\right|\,dt = 0.
\]
Therefore by Corollary~\ref{cor5.12} and the definition of $\Lambda$, for almost all $t \in [0,T]$ and $\widehat{\mathbb{P}}$-almost all $\omega \in \what{\Omega}$
\[
\left(\hu(t), \varphi \right)_\LS - \Lambda(\hu, \widehat{W}, \varphi)(t) = 0,
\]
i.e. for almost all $t \in [0,T]$ and $\widehat{\mathbb{P}}$-almost all $\omega \in \what{\Omega}$
\begin{align}
\label{eq:5.63}
\left(\hu(t), \varphi\right)_\LS + \nu \int_0^t \left(\nablaS\hu(s), \nablaS\varphi \right)_\LS\,ds + \int_0^{t} \left\langle\left[\hu(s) \cdot \nabla^\prime \right]\hu(s), \varphi \right\rangle\,ds  \\
 = \left(\hu(0), \varphi \right)_\LS + \int_0^{t} \left\langle f(s), \varphi \right\rangle\,ds + \left(\int_0^t G(s)\,d\widehat{W}(s), \varphi \right)_\LS \nn\;.
\end{align}

Putting $\widehat{\mathcal{U}} := \left(\what{\Omega}, \what{\mathcal{F}}, \what{\bbF}, \hp\right)$, we infer that the system $\left(\widehat{\mathcal{U}}, \widehat{W}, \hu\right)$ is a martingale solution to \eqref{eq:5.6}--\eqref{eq:5.9}.
\end{proof}

\appendix
\section{Compactness}
\label{sec:appA}

\subsection{Skorohod Theorem and Aldous condition}
\label{sec:appA.1}

Let $E$ be a separable Banach space with the norm $|\cdot|_E$ and let $\calB(E)$ be its Borel $\sigma$-field. The family of probability measures on $(E, \calB(E))$ will be denoted by $\Lambda$. The set of all bounded and continuous $E$-valued functions is denoted by $C_b(E)$.

\begin{definition}
\label{defnA.1.1}
The family $\Lambda$ of probability measures on $\left(E, \calB(E)\right)$ is said to be tight if for arbitrary $\varepsilon > 0$ there exists a compact set $K_\varepsilon \subset E$ such that
\[\mu(K_\varepsilon) \ge 1 - \varepsilon\,, \quad \mbox{for all}\,\, \mu \in \Lambda\,.\]
\end{definition}

We will need the following Jakubowski's generalisation of the Skorohod Theorem, in the form given by Brze\'{z}niak and Ondrej\'{a}t \cite[Theorem~C.1]{[BO11]}, see also \cite{[Jakubowski97]}, as we deal with non-metric spaces.

\begin{theorem}
\label{thmA.1.4}
Let $\mathcal{X}$ be a topological space such that there exists a sequence $\{f_m\}_{m \in \mathbb{N}}$ of continuous functions $f_m : \mathcal{X} \to \R$ that separates points of $\mathcal{X}$. Let us denote by $\mathcal{S}$ the $\sigma$-algebra generated by the maps $\{f_m\}$. Then
\begin{itemize}
\item[a)] every compact subset of $\mathcal{X}$ is metrizable,
\item[b)] if $(\mu_m)_{m \in \mathbb{N}}$ is a tight sequence of probability measures on $(\mathcal{X}, \mathcal{S})$, then there exists a subsequence $(m_k)_{k \in \mathbb{N}}$, a probability space $(\Omega, \mathcal{F}, \mathbb{P})$ with $\mathcal{X}$-valued Borel measurable variables $\xi_k, \xi$ such that $\mu_{m_k}$ is the law of $\xi_k$ and $\xi_k$ converges to $\xi$ almost surely on $\Omega$.
\end{itemize}
\end{theorem}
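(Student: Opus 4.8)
The plan is to transfer the whole problem, by means of the separating sequence $\{f_m\}$, to a classical Skorohod representation on a Polish space, and then to pull the construction back to $\mathcal{X}$. The central object is the map
\[
F \colon \mathcal{X} \longrightarrow \R^{\N}, \qquad F(x) = \bigl(f_m(x)\bigr)_{m \in \N}.
\]
Each coordinate being continuous, $F$ is continuous for the product topology on $\R^{\N}$; and since $\{f_m\}$ separates points, $F$ is injective. Moreover $\R^{\N}$ is a Polish space, and the $\sigma$-algebra $F^{-1}\bigl(\calB(\R^{\N})\bigr)$ coincides with $\mathcal{S}$, because the cylinder sets generate $\calB(\R^{\N})$ and their $F$-preimages are exactly the sets generated by the $f_m$.

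For part (a), I would argue that if $K \subset \mathcal{X}$ is compact then $F|_K \colon K \to F(K)$ is a continuous bijection from a compact space onto a subset of a Hausdorff space; such a map is automatically closed, hence a homeomorphism. As $F(K)$ inherits a metric from the metrizable space $\R^{\N}$, the compact set $K$ is metrizable, which is exactly (a).

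For part (b), push the measures forward to $\nu_m := \mu_m \circ F^{-1}$ on $\R^{\N}$. Continuity of $F$ carries compact sets to compact sets, so the tightness of $(\mu_m)$ on $(\mathcal{X}, \mathcal{S})$ yields tightness of $(\nu_m)$ on the Polish space $\R^{\N}$; fixing an increasing sequence of compacts $K_n \subset \mathcal{X}$ with $\mu_m(K_n) > 1 - 1/n$ for all $m$, each $\nu_m$ assigns mass $> 1-1/n$ to the compact set $F(K_n)$. By Prokhorov's theorem there are a subsequence $(m_k)$ and a probability measure $\nu$ on $\R^{\N}$ with $\nu_{m_k} \rightharpoonup \nu$, and the portmanteau inequality for the closed sets $F(K_n)$ gives $\nu(F(K_n)) \ge 1 - 1/n$, so that both the $\nu_{m_k}$ and $\nu$ are concentrated on $G := \bigcup_n F(K_n)$. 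The classical Skorohod representation theorem on $\R^{\N}$ now provides a probability space $(\Omega, \mathcal{F}, \mathbb{P})$ and $\R^{\N}$-valued random variables $\eta_k, \eta$ with laws $\nu_{m_k}, \nu$ such that $\eta_k \to \eta$ almost surely, and almost surely $\eta_k, \eta \in G$. Since $F$ is injective with Borel inverse on each compact $F(K_n)$ (by part (a)), the formulas $\xi_k := F^{-1}(\eta_k)$ and $\xi := F^{-1}(\eta)$ define, up to modification on a $\mathbb{P}$-null set, $\mathcal{S}$-measurable $\mathcal{X}$-valued random variables whose laws are $\mu_{m_k}$ and $\mu$.

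The main obstacle is the very last point: upgrading the almost sure convergence $\eta_k \to \eta$ in $\R^{\N}$ to almost sure convergence $\xi_k \to \xi$ in the topology of $\mathcal{X}$. This step genuinely uses tightness and cannot be bypassed, for without it the inverse $F^{-1}$ need not be sequentially continuous on $G$: taking $\mathcal{X} = \ell^2$ with the norm topology and $f_m$ the coordinate functionals, one has $F(e_k) = e_k \to 0$ in $\R^{\N}$ while $e_k \not\to 0$ in $\ell^2$, and such orthonormal configurations are precisely the ones ruled out by tightness. The delicate task, which is the heart of Jakubowski's argument, is to show that tightness forces the limit $\eta$ together with the tail of $(\eta_k)$ to remain, almost surely, on a compact portion of $G$ on which $F^{-1}$ is a genuine homeomorphism, whence $\xi_k \to \xi$ in $\mathcal{X}$ follows. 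I would treat this confinement-to-compacta statement as the crux, and regard the remaining reductions (continuity, injectivity, measurability, Prokhorov, and the classical Skorohod theorem) as standard.
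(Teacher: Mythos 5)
This theorem is not proved in the paper: it is quoted as a known result from Brze\'{z}niak--Ondrej\'{a}t \cite{[BO11]} and Jakubowski \cite{[Jakubowski97]}, so there is no internal proof to compare against, and your proposal has to be judged on its own terms. The frame you choose is the right one: the map $F=(f_m)_{m\in\N}\colon\mathcal{X}\to\R^{\N}$ is continuous and injective, $\mathcal{S}=F^{-1}(\calB(\R^{\N}))$, part (a) is complete and correct (a continuous bijection from a compact space to a Hausdorff space is a homeomorphism), and in part (b) the pushforward, Prokhorov, portmanteau and classical Skorohod steps are sound, as is the observation that $F^{-1}$ is Borel on the $\sigma$-compact set $G=\bigcup_n F(K_n)$, so that $\xi_k,\xi$ are well defined with the correct laws.

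The step you yourself flag as the crux is, however, a genuine gap, and the repair you gesture at does not work as stated. Tightness only yields $\mathbb{P}\bigl(\eta_k\notin F(K_n)\bigr)\le 1/n$ uniformly in $k$; these bounds are not summable in $k$, so Borel--Cantelli does not confine the tail of $(\eta_k(\omega))$ to a single $F(K_n)$ almost surely, and without such confinement the failure of sequential continuity of $F^{-1}$ on $G$ (your $\ell^2$ example) blocks the conclusion $\xi_k\to\xi$ in $\mathcal{X}$. This is exactly why Jakubowski's proof does not factor through the classical Skorohod theorem on $\R^{\N}$ followed by a pullback: it works with the nested compacts from the outset, decomposing the measures along the $K_n$, extracting a diagonal subsequence, and constructing the representing variables directly on $([0,1],\calB([0,1]),\mathrm{Leb})$ in such a way that, outside a set of probability at most $\varepsilon_n$, the approximants eventually lie in the same compact $K_n$ as the limit --- the confinement is built into the construction rather than deduced afterwards. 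As written, your text is an accurate map of the terrain and a correct proof of part (a), but the missing confinement argument is the substance of part (b), so the proposal does not constitute a proof of the theorem.
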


Let $(\mathbb{S}, \varrho)$ be a separable and complete metric space.

\begin{definition}
\label{defnA.1.5}
Let $u \in C([0,T]; \mathbb{S})$. The modulus of continuity of $u$ on $[0,T]$ is defined by
\[m(u, \delta) :=  \sup_{s,t \in [0,T],\,|t - s|\le \delta} \varrho (u(t), u(s)), \quad \delta > 0\,.\]
\end{definition}

Let $(\Omega, \mathcal{F}, \mathbb{P})$ be a probability space with filtration $\mathbb{F}:= (\mathcal{F}_t)_{t \in [0,T]}$ satisfying the usual conditions, see \cite{[Metivier82]},
and let $(X_n)_{n \in \mathbb{N}}$ be a sequence of continuous $\mathbb{F}$-adapted $\mathbb{S}$-valued processes.

\begin{definition}
\label{defnA.1.6}
We say that the sequence $(X_n)_{n \in \mathbb{N}}$ of $\mathbb{S}$-valued random variables satisfies condition $[\mathbf{T}]$ iff $\forall\, \varepsilon >0, \forall\, \eta > 0,\, \exists\, \delta > 0$:
\begin{equation}
\label{eq:A.1.1}
\sup_{n \in \mathbb{N}} \mathbb{P}\left\{m(X_n, \delta) > \eta\right\} \le \varepsilon\,.
\end{equation}
\end{definition}

\begin{lemma}\cite[Lemma~2.4]{[BM14]}
\label{lemmaA.1.7}
Assume that $(X_n)_{n \in \mathbb{N}}$ satisfies condition $[\mathbf{T}]$. Let $\mathbb{P}_n$ be the law of $X_n$ on $C([0,T]; \mathbb{S})$, $n \in \mathbb{N}$. Then for every $\varepsilon > 0$ there exists a subset $A_\varepsilon \subset C([0,T]; \mathbb{S})$ such that
\[\sup_{n \in \mathbb{N}} \mathbb{P}_n(A_\varepsilon) \ge 1 - \varepsilon\]
and
\begin{equation}
\label{eq:A.1.2}
\lim_{\delta \to 0} \sup_{u \in A_\varepsilon} m(u, \delta) = 0\,.
\end{equation}
\end{lemma}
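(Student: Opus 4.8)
The plan is to run the standard Arzel\`a--Ascoli type argument that converts a uniform modulus-of-continuity control in probability (condition $[\mathbf{T}]$) into a single set carrying mass at least $1-\varepsilon$ under every $\mathbb{P}_n$ simultaneously, on which the modulus of continuity decays uniformly. The whole point is that condition $[\mathbf{T}]$ is exactly the probabilistic version of equicontinuity, so one only has to package it at a countable family of scales with a summable error budget and then intersect the resulting ``good'' events.

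First I would fix $\varepsilon > 0$ and exhaust the error tolerance geometrically. For each $k \in \mathbb{N}$, applying Definition~\ref{defnA.1.6} (condition $[\mathbf{T}]$) with the regularity level $\eta = 1/k$ and the probability threshold $\varepsilon\, 2^{-k}$, I obtain a $\delta_k > 0$ such that
\[
\sup_{n \in \mathbb{N}} \mathbb{P}\bigl\{ m(X_n, \delta_k) > 1/k \bigr\} \le \varepsilon\, 2^{-k}.
\]
I then set
\[
A_\varepsilon := \bigcap_{k=1}^\infty \bigl\{ u \in C([0,T]; \mathbb{S}) : m(u, \delta_k) \le 1/k \bigr\} \subset C([0,T]; \mathbb{S}).
\]

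Next I would verify the two conclusions. For the mass bound, since $\mathbb{P}_n$ is the law of $X_n$, a union bound over $k$ gives
\[
\mathbb{P}_n\bigl( A_\varepsilon^c \bigr) \le \sum_{k=1}^\infty \mathbb{P}\bigl\{ m(X_n, \delta_k) > 1/k \bigr\} \le \sum_{k=1}^\infty \varepsilon\, 2^{-k} = \varepsilon,
\]
hence $\mathbb{P}_n(A_\varepsilon) \ge 1 - \varepsilon$ for \emph{every} $n$, which yields the claimed inequality uniformly in $n$ (in fact with $\inf_n$ in place of $\sup_n$). For the modulus statement I would use that $\delta \mapsto m(u, \delta)$ is nondecreasing: given $\eta > 0$, choose $k$ with $1/k < \eta$; then for every $u \in A_\varepsilon$ and every $\delta \le \delta_k$ one has $m(u, \delta) \le m(u, \delta_k) \le 1/k < \eta$, so $\sup_{u \in A_\varepsilon} m(u, \delta) \le 1/k$ whenever $\delta \le \delta_k$. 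Letting $\delta \to 0$ and then $k \to \infty$ gives \eqref{eq:A.1.2}.

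The only genuinely non-bookkeeping point, and the step I would treat most carefully, is measurability: I must ensure each set $\{ u : m(u, \delta) \le r \}$ is Borel in $C([0,T]; \mathbb{S})$ so that $A_\varepsilon \in \mathcal{B}(C([0,T]; \mathbb{S}))$ and $\mathbb{P}_n(A_\varepsilon)$ is even defined. This follows because $u \mapsto m(u, \delta)$ is continuous (indeed $1$-Lipschitz) for the supremum metric on $C([0,T]; \mathbb{S})$, so each such set is closed and the countable intersection defining $A_\varepsilon$ is Borel. Apart from this, the construction is a routine diagonal argument requiring no further input.
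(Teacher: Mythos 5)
Your proof is correct and is exactly the standard argument behind this lemma (the paper itself gives no proof, deferring to \cite[Lemma~2.4]{[BM14]}, where the same construction --- apply condition $[\mathbf{T}]$ at levels $\eta=1/k$ with error budget $\varepsilon 2^{-k}$, intersect the closed sets $\{m(\cdot,\delta_k)\le 1/k\}$, and use monotonicity of $\delta\mapsto m(u,\delta)$ --- is used). Two harmless remarks: the map $u\mapsto m(u,\delta)$ is $2$-Lipschitz rather than $1$-Lipschitz for the supremum metric (continuity is all you need, so nothing changes), and you are right that your argument delivers the stronger bound with $\inf_{n}$ in place of the $\sup_{n}$ appearing in the statement, which is evidently the intended reading.
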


Now we recall the Aldous condition $[\mathbf{A}]$, which is connected with condition $[\mathbf{T}]$ (see \cite{[Metivier88]} and \cite{[Aldous78]}). This condition allows to investigate the modulus of continuity for the sequence of stochastic processes by means of stopped processes.

\begin{definition}[Aldous condition]
\label{defnA.1.8}
A sequence $(X_n)_{n \in \mathbb{N}}$ satisfies condition $[\mathbf{A}]$ iff $\,\forall\, \varepsilon > 0$, $\forall\, \eta > 0$, $\exists \, \delta > 0$ such that for every sequence $(\tau_n)_{n \in \mathbb{N}}$ of $\mathbb{F}$-stopping times with $\tau_n \le T$ one has
\[\sup_{n \in \mathbb{N}} \sup_{0 \le \theta \le \delta} \mathbb{P}\left\{\varrho(X_n(\tau_n + \theta), X_n(\tau_n)) \ge \eta \right\} \le \varepsilon\,.\]
\end{definition}

\begin{lemma}
\label{lemmaA.1.9}
\cite[Theorem~3.2]{[Metivier88]}
Conditions $[\mathbf{A}]$ and $[\mathbf{T}]$ are equivalent.
\end{lemma}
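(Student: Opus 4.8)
The plan is to prove the equivalence by establishing the two implications separately. The implication $[\mathbf{T}] \Rightarrow [\mathbf{A}]$ is elementary, while $[\mathbf{A}] \Rightarrow [\mathbf{T}]$ is the substantial half (this is essentially Aldous's theorem, and is the direction actually used to deduce tightness in combination with Lemma~\ref{lemmaA.1.7}).

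For $[\mathbf{T}] \Rightarrow [\mathbf{A}]$ I would argue by a pathwise set inclusion. Fix $\varepsilon,\eta>0$ and let $\delta>0$ be the number provided by condition $[\mathbf{T}]$. For any sequence of $\mathbb{F}$-stopping times $\tau_n\le T$ and any deterministic $\theta\in[0,\delta]$, the two instants $\tau_n$ and $\tau_n+\theta$ differ by at most $\delta$, so along each path $\varrho(X_n(\tau_n+\theta),X_n(\tau_n))\le m(X_n,\delta)$ directly from Definition~\ref{defnA.1.5}. Consequently $\{\varrho(X_n(\tau_n+\theta),X_n(\tau_n))\ge\eta\}\subseteq\{m(X_n,\delta)\ge\eta\}$, and taking probabilities and then suprema over $n$ and over $\theta\le\delta$ yields condition $[\mathbf{A}]$ with the very same $\delta$ (instants exceeding $T$ being handled by the usual convention $X_n(t):=X_n(T)$ for $t\ge T$).

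For the converse $[\mathbf{A}] \Rightarrow [\mathbf{T}]$, fix $\varepsilon,\eta>0$ and seek $\delta>0$ with $\sup_n\mathbb{P}\{m(X_n,\delta)>\eta\}\le\varepsilon$. The first step is to reduce the modulus of continuity to oscillations over the cells of a uniform grid. Setting $t_i=i\delta$ for $0\le i\le N$ with $N=\lceil T/\delta\rceil$, any two instants within distance $\delta$ lie in the same or in two adjacent cells, which gives the elementary pathwise bound
\[
m(X_n,\delta)\;\le\;3\,\max_{0\le i<N}\ \sup_{t_i\le t\le t_{i+1}}\varrho\bigl(X_n(t),X_n(t_i)\bigr).
\]
A union bound over the $N$ cells then reduces the task to estimating, uniformly in $i$ and $n$, the single-cell oscillation probability $\mathbb{P}\{\sup_{t_i\le t\le t_{i+1}}\varrho(X_n(t),X_n(t_i))>\eta/3\}$. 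Each such oscillation I would control by introducing the exit time $\tau=\inf\{t\ge t_i:\varrho(X_n(t),X_n(t_i))>\eta/3\}\wedge t_{i+1}$, which is an $\mathbb{F}$-stopping time since the filtration satisfies the usual conditions and $X_n$ is continuous; on the bad event one has $t_i\le\tau\le t_{i+1}$ together with $\varrho(X_n(\tau),X_n(t_i))\ge\eta/3$.

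The main obstacle sits exactly at this last point: condition $[\mathbf{A}]$ as formulated in Definition~\ref{defnA.1.8} controls increments $\varrho(X_n(\tau_n+\theta),X_n(\tau_n))$ only for a \emph{deterministic} shift $\theta$, whereas here the shift $\tau-t_i$ is a \emph{random} time lying in $[0,\delta]$. I would bridge this gap through the standard reformulation of Aldous's condition in terms of a pair of stopping times $\sigma_n\le\rho_n\le\sigma_n+\delta$, namely $\sup_n\mathbb{P}\{\varrho(X_n(\sigma_n),X_n(\rho_n))\ge\eta\}\le\varepsilon$; this two-stopping-time form is equivalent to Definition~\ref{defnA.1.8}, the passage from the deterministic-shift form being a routine conditioning/discretization of the random shift (approximating $\tau-t_i$ by finitely many deterministic values and using the Markov-type structure of stopping times). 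Applying the two-stopping-time bound to the pair $(t_i,\tau)$ produces, for a suitably small $\delta$, the estimate $\mathbb{P}\{\varrho(X_n(\tau),X_n(t_i))\ge\eta/3\}\le\varepsilon/N$ uniformly in $n$; summing over the $N$ cells and combining with the grid bound above yields $\sup_n\mathbb{P}\{m(X_n,\delta)>\eta\}\le\varepsilon$, which is precisely condition $[\mathbf{T}]$. I expect the verification of the deterministic-to-random-shift equivalence of Aldous's condition to be the only genuinely delicate part; everything else is a grid decomposition and a union bound.
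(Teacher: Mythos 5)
The paper offers no argument for this lemma at all --- it is quoted directly from M\'etivier --- so your proposal has to stand on its own. The direction $[\mathbf{T}]\Rightarrow[\mathbf{A}]$ is correct as you present it: the pathwise inclusion $\{\varrho(X_n(\tau_n+\theta),X_n(\tau_n))\ge\eta\}\subseteq\{m(X_n,\delta)\ge\eta\}$ is all that is needed. The converse, however, contains a genuine quantitative gap at the union-bound step.

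Concretely: after the grid reduction you need, for each of the $N=\lceil T/\delta\rceil$ cells, a single-cell bound $\mathbb{P}\{\varrho(X_n(\tau),X_n(t_i))\ge\eta/3\}\le\varepsilon/N=\varepsilon\delta/T$. But the Aldous condition (even in its two-stopping-time form) has the shape ``for every target probability $\varepsilon'$ there exists an admissible lag $\delta'(\varepsilon')>0$''; to run your argument you must choose a mesh $\delta$ satisfying $\delta\le\delta'(\varepsilon\delta/T)$, i.e.\ the lag must be chosen in response to a target probability that itself shrinks proportionally to that lag. There is no reason such a $\delta$ exists: if, say, $\delta'(\varepsilon')\sim(\varepsilon')^2$, the requirement forces $\delta\ge c\,T^2/\varepsilon^2$, which is not small. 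This circularity is precisely why Aldous's theorem is not proved by a deterministic grid plus a union bound. The standard proof (Aldous's original argument, and the one behind M\'etivier's Theorem~3.2) replaces the deterministic grid by the \emph{random} partition given by the successive oscillation times $\sigma_0=0$, $\sigma_{k+1}=\inf\{t>\sigma_k:\varrho(X_n(t),X_n(\sigma_k))\ge\eta\}$, and applies condition $[\mathbf{A}]$ to these stopping times twice: once to show that, outside an event of small probability, consecutive oscillation times are separated by at least some fixed $\delta>0$ (so that $m(X_n,\delta)\le 2\eta$ there), and once, via a counting/expectation argument, to show that the number of such times in $[0,T]$ is stochastically bounded uniformly in $n$. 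Your reduction of the deterministic-shift form to the two-stopping-time form is indeed standard and unproblematic, but it does not remove the circularity above; the random-partition argument is the missing core of the proof.
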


\subsection{Tightness criterion}
\label{sec:appA.2}
Now we formulate the compactness criterion analogous to the result due to Mikulevicus and Rozowskii \cite{[MR05]}, Brze\'zniak and Motyl \cite{[BM14]} for the space $\mathcal{Z}_T$, see also \cite[Lemma~4.2]{[BD18]}.

\begin{lemma}
\label{lemmaA.2.1}
Let $\mathcal{Z}_T$, $\mathcal{T}$ be as defined in \eqref{def:ZT}. Then a set $\calK \subset \mathcal{Z}_T$ is $\mathcal{T}$-relatively compact if the following three conditions hold
\begin{itemize}
\item[(a)] $\sup_{u \in \calK} \sup_{s \in [0,T]} \|u(s)\|_\LS < \infty\,,$
\item[(b)] $\sup_{u \in \calK} \int_0^T \|u(s)\|^2_\rV\,ds < \infty\,$, i.e. $\calK$ is bounded in $L^2(0,T; \rV)$,
\item[(c)] $\lim_{\delta \to 0} \sup_{u \in \calK} \sup_{\underset{|t-s| \le \delta}{s,t \in [0,T]}}\|u(t) - u(s)\|_{\rU^\ast} = 0\,.$
\end{itemize}
\end{lemma}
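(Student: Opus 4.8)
The plan is to establish \emph{sequential} $\mathcal{T}$-relative compactness: since a sequence converges in $\mathcal{T}$ exactly when it converges in each of $\mathbf{T}_1,\dots,\mathbf{T}_4$, it suffices to show that every sequence $(u_n)\subset\calK$ has a subsequence converging, \emph{to one and the same limit} $u\in\mathcal{Z}_T$, simultaneously in $C([0,T];\rU^\ast)$, in $L^2_w(0,T;\rV)$, in $L^2(0,T;\rH)$ and in $C([0,T];\rH_w)$. The geometric input is the chain $\rU\hookrightarrow\rV\hookrightarrow\rH\hookrightarrow\rV^\ast\hookrightarrow\rU^\ast$ in which, on the bounded smooth domain $Q$, the embedding $\rV\hookrightarrow\rH$ is compact (Rellich) and, dualising the compact embedding $\rU\hookrightarrow\rH$, so is $\rH\hookrightarrow\rU^\ast$; moreover $\rU$ is dense in $\rH$ because $\calV\subset\rU$ is already dense in $\rH$.

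First I would dispose of the two ``soft'' topologies. For $\mathbf{T}_2$: by (b) the sequence $(u_n)$ is bounded in the Hilbert (hence reflexive) space $L^2(0,T;\rV)$, so after passing to a subsequence $u_n\rightharpoonup u$ weakly in $L^2(0,T;\rV)$, which is exactly convergence in $L^2_w(0,T;\rV)$ and guarantees $u\in L^2(0,T;\rV)$. For $\mathbf{T}_1$: by (a) the slices $\{u_n(t):n\in\N\}$ are bounded in $\rH$, hence relatively compact in $\rU^\ast$ by compactness of $\rH\hookrightarrow\rU^\ast$, while (c) is precisely equicontinuity of $(u_n)$ in $C([0,T];\rU^\ast)$; the Arzel\`a--Ascoli theorem then yields a further subsequence converging in $C([0,T];\rU^\ast)$. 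Since both modes of convergence entail convergence in $L^2(0,T;\rU^\ast)$ (weakly, respectively strongly), the $C([0,T];\rU^\ast)$-limit agrees with $u$, and we keep calling it $u$.

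The core of the argument is strong convergence in $L^2(0,T;\rH)$, for which I would invoke the Ehrling (Lions--Peetre) inequality: because $\rV\hookrightarrow\rH$ is compact and $\rH\hookrightarrow\rU^\ast$ continuous, for every $\eta>0$ there is $C_\eta$ with $\|v\|_\rH\le\eta\|v\|_\rV+C_\eta\|v\|_{\rU^\ast}$. Applied to $u_n-u$ and integrated in time this gives
\[
\|u_n-u\|_{L^2(0,T;\rH)}^2\le 2\eta^2\|u_n-u\|_{L^2(0,T;\rV)}^2+2C_\eta^2\,T\,\sup_{t\in[0,T]}\|u_n(t)-u(t)\|_{\rU^\ast}^2 .
\]
The first term is bounded uniformly in $n$ by (b), while the second tends to $0$ by the $C([0,T];\rU^\ast)$-convergence; letting first $n\to\infty$ and then $\eta\to0$ yields $u_n\to u$ in $L^2(0,T;\rH)$, i.e.\ convergence in $\mathbf{T}_3$.

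Finally, for $\mathbf{T}_4$ I would fix $h\in\rH$ and $\kappa>0$, choose $h_\kappa\in\rU$ with $\|h-h_\kappa\|_\rH<\kappa$ (possible by density of $\rU$ in $\rH$), and split
\[
\bigl|(u_n(t)-u(t),h)_\rH\bigr|\le \bigl|{}_{\rU^\ast}\langle u_n(t)-u(t),h_\kappa\rangle_\rU\bigr|+\|u_n(t)-u(t)\|_\rH\,\|h-h_\kappa\|_\rH .
\]
The first term is $\le\|h_\kappa\|_\rU\sup_t\|u_n(t)-u(t)\|_{\rU^\ast}\to0$, while the second is $\le 2\bigl(\sup_{m,s}\|u_m(s)\|_\rH\bigr)\kappa$, uniformly bounded by (a); taking $\sup_t$, then $n\to\infty$, then $\kappa\to0$ shows $\sup_{t}|(u_n(t)-u(t),h)_\rH|\to0$ for every $h\in\rH$, which is convergence in $C([0,T];\rH_w)$, and the uniform $\rH$-bound together with this certifies $u\in C([0,T];\rH_w)$, so $u\in\mathcal{Z}_T$. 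I expect the only genuinely delicate point to be the strong $L^2(0,T;\rH)$ step: it is where compactness (rather than mere boundedness) enters, hinging on combining the quantitative time-modulus control (c) in the weak norm $\rU^\ast$ with the compact embedding $\rV\hookrightarrow\rH$ through the Ehrling inequality; the remaining care is the bookkeeping ensuring one single subsequence serves all four topologies with a common limit.
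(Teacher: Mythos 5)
Your argument is correct and is essentially the standard Dubinskii/Lions-type proof; the paper itself does not prove Lemma~\ref{lemmaA.2.1} but only cites \cite{[MR05]}, \cite{[BM14]} and \cite{[BD18]}, and the proofs given there follow exactly your scheme: Arzel\`a--Ascoli in $C([0,T];\rU^\ast)$ using (a), the compactness of $\rH\hookrightarrow\rU^\ast$ and the equicontinuity (c); weak compactness of bounded sets of $L^2(0,T;\rV)$ for (b); the Ehrling--Lions inequality for the strong $L^2(0,T;\rH)$ convergence; and a density argument in $\rU\subset\rH$ for $C([0,T];\rH_w)$. The only point you pass over is that what you prove is \emph{sequential} relative compactness, whereas the lemma asserts relative compactness for the (a priori non-metrizable) topology $\mathcal{T}$; this is harmless but needs a sentence: by (a) and (b) the $\mathcal{T}$-closure of $\calK$ stays inside a ball of $L^\infty(0,T;\rH)\cap L^2(0,T;\rV)$, and on such a ball each of $\mathbf{T}_1,\dots,\mathbf{T}_4$ (hence their supremum) is metrizable --- the weak topology of the separable Hilbert space $L^2(0,T;\rV)$ is metrizable on bounded sets, as is $C([0,T];\rH_w)$ on sets bounded in $C([0,T];\rH)$ --- so sequential compactness and compactness coincide there. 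With that remark added, and with the (easy, and implicitly used) observation that $\|u(t)\|_\rH\le\liminf_n\|u_n(t)\|_\rH$ by weak lower semicontinuity so that the limit again satisfies (a), your proof is complete and agrees with the cited references.
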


Using \S\,\ref{sec:appA.1} and the compactness criterion from Lemma~\ref{lemmaA.2.1} we obtain the following corollary.

\begin{corollary}[Tightness criterion]
\label{corA.2.2}
Let $(X_n)_{n \in \mathbb{N}}$ be a sequence of continuous $\mathbb{F}$-adapted $\mathrm{H}$-valued processes such that
\begin{itemize}
\item[(a)] there exists a constant $C_1 > 0$ such that
\[\inf_{\eps > 0}\,\E \left[ \sup_{s \in [0,T]} \|\ale(s)\|^2_{\LS} \right] \le C_1,\]

\item[(b)] there exists a constant $C_2 > 0$ such that
\[\inf_{\eps > 0}\,\E \left[ \int_0^T \|\nabla^\prime \ale(s)\|^2_{\LS}\,ds \right] \le C_2,\]

\item[(c)] $(\ale)_{\eps > 0}$ satisfies the Aldous condition $[\mathbf{A}]$ in $\rU^\ast$.
\end{itemize}
Let $\wtd{\mathbb{P}}_\eps$ be the law of $\ale$ on $\mathcal{Z}_T$. Then for every $\delta > 0$ there exists a compact subset $K_\delta$ of $\mathcal{Z}_T$ such that
\[ \inf_{\eps > 0} \tilde{\mathbb{P}}_\eps(K_\delta) \ge 1 - \delta.\]
\end{corollary}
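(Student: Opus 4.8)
The plan is to deduce this tightness criterion from the deterministic compactness result in Lemma~\ref{lemmaA.2.1} by the standard device of combining it with the probabilistic estimates (a)--(c) via Chebyshev's inequality, the equivalence of the Aldous condition $[\mathbf{A}]$ and condition $[\mathbf{T}]$ (Lemma~\ref{lemmaA.1.9}), and Lemma~\ref{lemmaA.1.7}. Concretely, for a given $\delta>0$ I would build a compact set $K_\delta\subset\mathcal{Z}_T$ as an intersection of three sets, each controlling one of the three conditions of Lemma~\ref{lemmaA.2.1}, and each chosen to have $\wtd{\mathbb{P}}_\eps$-measure at least $1-\delta/3$ uniformly in $\eps$.

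\textbf{Step 1 (bounded sup-norm).} First I would use assumption (a) and Chebyshev's inequality: choosing $R_1>0$ large, the set
\[
B_1=\Big\{u\in\mathcal{Z}_T:\ \sup_{s\in[0,T]}\|u(s)\|_{\LS}\le R_1\Big\}
\]
satisfies $\wtd{\mathbb{P}}_\eps(B_1)=\mathbb{P}\big(\sup_s\|\ale(s)\|_{\LS}\le R_1\big)\ge 1-C_1/R_1^2\ge 1-\delta/3$ for $R_1$ large enough, uniformly in $\eps$. This controls condition (a) of Lemma~\ref{lemmaA.2.1}.

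\textbf{Step 2 (bounded in $L^2(0,T;\rV)$).} Similarly, using assumption (b) and Chebyshev, I would choose $R_2>0$ so that
\[
B_2=\Big\{u\in\mathcal{Z}_T:\ \int_0^T\|u(s)\|_\rV^2\,ds\le R_2\Big\}
\]
has $\wtd{\mathbb{P}}_\eps(B_2)\ge 1-C_2/R_2\ge 1-\delta/3$ uniformly in $\eps$, controlling condition (b).

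\textbf{Step 3 (equicontinuity in $\rU^\ast$).} This is the step where the Aldous condition enters and is the main point. By assumption (c) the family $(\ale)_{\eps>0}$ satisfies $[\mathbf{A}]$ in $\rU^\ast$, hence by Lemma~\ref{lemmaA.1.9} it also satisfies condition $[\mathbf{T}]$ (viewing the processes as $\mathbb{S}=\rU^\ast$-valued continuous processes). Applying Lemma~\ref{lemmaA.1.7} with $\varepsilon=\delta/3$ yields a set $A_{\delta/3}\subset C([0,T];\rU^\ast)$ with $\sup_\eps\wtd{\mathbb{P}}_\eps(A_{\delta/3})\ge 1-\delta/3$ and $\lim_{\theta\to0}\sup_{u\in A_{\delta/3}}m(u,\theta)=0$; this last property is precisely condition (c) of Lemma~\ref{lemmaA.2.1}. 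I would set $B_3=A_{\delta/3}\cap\mathcal{Z}_T$.

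\textbf{Step 4 (assembling $K_\delta$).} Finally I would define $K_\delta=\overline{B_1\cap B_2\cap B_3}$ (closure in $\mathcal{T}$). By construction this set satisfies all three hypotheses of Lemma~\ref{lemmaA.2.1}, so it is $\mathcal{T}$-relatively compact, and being closed it is compact. A union bound gives
\[
\wtd{\mathbb{P}}_\eps(K_\delta)\ge 1-\big[\wtd{\mathbb{P}}_\eps(B_1^c)+\wtd{\mathbb{P}}_\eps(B_2^c)+\wtd{\mathbb{P}}_\eps(B_3^c)\big]\ge 1-\delta,
\]
uniformly in $\eps$, which is the claim. The main obstacle, or rather the only delicate point, is the bridge in Step 3: one must verify that the processes $\ale$ are genuinely continuous $\rU^\ast$-valued $\mathbb{F}$-adapted processes so that Lemmas~\ref{lemmaA.1.9} and \ref{lemmaA.1.7} apply, and that the resulting equicontinuity modulus survives the passage to the (weak/strong) topologies comprising $\mathcal{T}$; the other two steps are routine Chebyshev arguments.
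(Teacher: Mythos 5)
Your argument is correct and is exactly the standard proof the paper has in mind: the paper itself offers no written proof of Corollary~\ref{corA.2.2} beyond the remark that it follows from \S\,\ref{sec:appA.1} together with Lemma~\ref{lemmaA.2.1}, and your Chebyshev bounds for (a)--(b), the passage $[\mathbf{A}]\Rightarrow[\mathbf{T}]$ via Lemma~\ref{lemmaA.1.9} combined with Lemma~\ref{lemmaA.1.7} for (c), and the closed intersection $K_\delta$ reproduce precisely that argument (as carried out in the cited references). The only cosmetic point is that the $\inf_{\eps>0}$ in the corollary's hypotheses is evidently a misprint for $\sup_{\eps>0}$, and you correctly read it as a uniform-in-$\eps$ bound.
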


\section{Kuratowski Theorem and proof of Lemma~\ref{lemma5.7}}
\label{s:AppB}
This appendix is dedicated to the proof of Lemma~\ref{lemma5.7}. We will first recall the Kuratowski Theorem \cite{[Kuratowski52]} in the next subsection and prove some related results which will be used later to prove Lemma~\ref{lemma5.7} in \S\,\ref{s:AppB.2}.

\subsection{Kuratowski Theorem and related results}
\label{s:AppB.1}

\begin{theorem}
\label{thmB.1.1}
Assume that $X_1, X_2$ are the Polish spaces with their Borel $\sigma$-fields denoted respectively by $\calB(X_1), \calB(X_2)$. If $\varphi \colon X_1 \to X_2$ is an injective Borel measurable map then for any $E_1 \in \calB(X_1)$, $E_2 := \varphi(E_1) \in \calB(X_2)$.
\end{theorem}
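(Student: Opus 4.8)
The plan is to deduce this from the two fundamental theorems on analytic sets: every Borel image of a Borel subset of a Polish space is analytic, and a subset of a Polish space that is simultaneously analytic and coanalytic is Borel (Suslin's theorem), the latter resting on the Lusin separation theorem asserting that two disjoint analytic sets can be separated by a Borel set. I will take both of these as known.

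First I would reduce to the case of a continuous injection defined on the whole space. Using the standard fact that, given a Borel map together with countably many Borel sets, one may refine the topology of $X_1$ to a finer Polish topology generating the same Borel $\sigma$-field $\calB(X_1)$ and rendering $\varphi$ continuous and $E_1$ clopen, I replace $X_1$ by the clopen (hence Polish) subspace $E_1$. Thus it suffices to prove: if $X_1$ is Polish and $\varphi \colon X_1 \to X_2$ is continuous and injective, then $\varphi(X_1) \in \calB(X_2)$. That $\varphi(X_1)$ is analytic is immediate from the first theorem above, so the entire content is to show it is Borel.

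The heart of the argument is a Suslin-scheme construction exploiting injectivity. Fixing complete metrics on $X_1$ and $X_2$, I would build a scheme $(C_s)_{s \in \mathbb{N}^{<\mathbb{N}}}$ of Borel subsets of $X_1$ with $C_\emptyset = X_1$, each $C_s$ partitioned by its children $\{C_{s^\frown n}\}_n$, and with the diameters of both $C_s$ and of its image $\varphi(C_s)$ tending to $0$ along every branch; the latter is achieved by intersecting, at each step, with the preimages of a countable partition of $X_2$ into small Borel pieces, together with a small Borel partition of $X_1$. Since $\varphi$ is injective and the children partition their parent, the analytic sets $\{\varphi(C_{s^\frown n})\}_n$ are pairwise disjoint; the generalized Lusin separation theorem then furnishes pairwise disjoint Borel sets $D_{s^\frown n} \supseteq \varphi(C_{s^\frown n})$, which I arrange to be nested, $D_{s^\frown n} \subseteq D_s$, and of vanishing diameter.

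Finally I would verify the identity
\[
\varphi(X_1) = \bigcap_{n} \bigcup_{|s| = n} D_s,
\]
whose right-hand side is manifestly Borel, whence $E_2 = \varphi(E_1) \in \calB(X_2)$. The inclusion $\subseteq$ follows by tracking the unique branch through the partition; for $\supseteq$, a point in the intersection selects, by the disjointness and nesting of the $D_s$, a single branch $\alpha$, and the vanishing diameters in the complete spaces $X_1$ and $X_2$ force both $\bigcap_n C_{\alpha|n}$ to reduce to a single point $x$ and that point to equal $\varphi(x)$. The main obstacle is precisely this simultaneous bookkeeping: arranging the scheme so that injectivity yields disjoint images, the separation theorem yields disjoint nested Borel hulls, and the diameter control in the target $X_2$ (which is not automatic from mere continuity) is enforced by refining through a small-diameter Borel partition of $X_2$.
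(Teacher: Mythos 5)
The paper does not actually prove Theorem~\ref{thmB.1.1}: it is quoted as the classical Kuratowski (Lusin--Suslin) theorem with a bare citation of \cite{[Kuratowski52]}, so there is no in-paper argument to measure yours against. Your proposal is the standard descriptive-set-theoretic proof and is correct in outline: the reduction to a continuous injection on a Polish space (refine the topology of $X_1$ to a finer Polish topology with the same Borel $\sigma$-field making $\varphi$ continuous and $E_1$ clopen), the Suslin scheme of Borel pieces with shrinking diameters whose images are pairwise disjoint analytic sets by injectivity, the generalized Lusin separation into pairwise disjoint nested Borel hulls $D_s$, and the identity $\varphi(X_1)=\bigcap_n\bigcup_{|s|=n}D_s$. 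Two small points deserve care when this is written out. First, since the $C_s$ are only Borel, $\bigcap_n C_{\alpha|n}$ along the selected branch $\alpha$ may be empty; the correct statement is that any choice of points $x_n\in C_{\alpha|n}$ (these sets are nonempty because $y\in D_{\alpha|n}$, provided you arrange $D_s=\emptyset$ whenever $C_s=\emptyset$) is Cauchy by the diameter condition, converges by completeness to some $x\in X_1$, and then continuity of $\varphi$ together with the shrinking of the $D_{\alpha|n}$ around $y$ forces $\varphi(x)=y$. Second, the separation theorem by itself does not give the $D_s$ small diameter; one must shrink them afterwards, e.g.\ replace $D_s$ by $D_s\cap\overline{\varphi(C_s)}$, which simultaneously handles the first point and is exactly the device used in the textbook proof. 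With those adjustments, and granting the two pillars you explicitly assume (Borel images are analytic; analytic plus coanalytic implies Borel via Lusin separation), the argument is complete.
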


Next two lemmas are the main results of this appendix. For the proof of Lemma~\ref{lemmaB.1.2} see \cite[Appendix~B]{[BD19]}.

\begin{lemma}
\label{lemmaB.1.2}
Let $X_1, X_2$ and $Z$ be topological spaces such that $X_1$ is a Borel subset of $X_2$. Then $X_1 \cap Z$ is a Borel subset of $X_2 \cap Z$, where $X_2 \cap Z$ is a topological space too, with the topology given by
\begin{equation}\label{eq:B.1.1}
\tau(X_2 \cap Z) = \left\{ A \cap B : A \in \tau(X_2), B \in \tau(Z)\right\}.
\end{equation}
\end{lemma}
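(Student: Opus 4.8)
The plan is to realise the set inclusion of $X_2 \cap Z$ into $X_2$ as a continuous (hence Borel measurable) map, and then to obtain $X_1 \cap Z$ simply as the preimage of $X_1$ under it. Throughout I treat $X_1, X_2, Z$ as subsets of one ambient set, so that the intersections and the inclusion $X_1 \subseteq X_2$ (implicit in ``$X_1$ is a Borel subset of $X_2$'') make sense. First I would record the elementary fact that
\[
\mathscr{B} := \{ A \cap B : A \in \tau(X_2),\, B \in \tau(Z)\}
\]
is a base for a topology on $X_2 \cap Z$: it contains $X_2 \cap Z$ (take $A = X_2$, $B = Z$) and is stable under finite intersections, since $(A_1 \cap B_1) \cap (A_2 \cap B_2) = (A_1 \cap A_2) \cap (B_1 \cap B_2)$. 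The open sets of $\tau(X_2 \cap Z)$ are therefore the arbitrary unions of members of $\mathscr{B}$, and the Borel $\sigma$-field $\calB(X_2 \cap Z)$ is the $\sigma$-field generated by $\mathscr{B}$.

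Next I would consider the inclusion map $\iota : X_2 \cap Z \to X_2$, $\iota(x) = x$, and verify that it is continuous. For any $A \in \tau(X_2)$ one computes $\iota^{-1}(A) = A \cap (X_2 \cap Z) = A \cap Z$, where the last equality uses $A \subseteq X_2$. Taking $B = Z \in \tau(Z)$ exhibits $A \cap Z$ as a member of $\mathscr{B}$, hence as an open subset of $X_2 \cap Z$; thus $\iota$ is continuous. Since a continuous map between topological spaces is Borel measurable (the sets with Borel preimage form a $\sigma$-field containing all open sets), we get $\iota^{-1}(E) \in \calB(X_2 \cap Z)$ for every $E \in \calB(X_2)$.

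Finally, since by hypothesis $X_1 \in \calB(X_2)$, I would apply the previous step with $E = X_1$ and use $X_1 \subseteq X_2$ to simplify $\iota^{-1}(X_1) = X_1 \cap (X_2 \cap Z) = X_1 \cap Z$, which therefore lies in $\calB(X_2 \cap Z)$; this is exactly the assertion. The argument is short, and the only genuinely delicate point will be the bookkeeping: one must read the displayed formula for $\tau(X_2 \cap Z)$ as specifying a \emph{base} (closed up under arbitrary unions) rather than a topology outright, and one must apply the inclusions $A \subseteq X_2$ and $X_1 \subseteq X_2$ correctly when reducing the two preimages to $A \cap Z$ and $X_1 \cap Z$. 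I do not expect any analytic obstruction here; in contrast to a Kuratowski-type argument (Theorem~\ref{thmB.1.1}), no injectivity or Polish-space hypotheses are needed, which is presumably why this purely set-topological lemma is isolated from Lemma~\ref{lemma5.7}.
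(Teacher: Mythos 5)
The paper does not actually contain its own proof of this lemma: it simply defers to \cite[Appendix~B]{[BD19]}, so there is no in-text argument to compare against line by line. On its own merits your proof is correct: realising $X_1\cap Z$ as the preimage of $X_1$ under the inclusion $\iota\colon X_2\cap Z\to X_2$, checking that $\iota$ is continuous because $\iota^{-1}(A)=A\cap Z$ is a basic open set of $\tau(X_2\cap Z)$, and then invoking the standard fact that continuous maps are Borel, is exactly the natural route (and is, in substance, the ``good sets'' argument: the family $\{E\subseteq X_2 : \iota^{-1}(E)\in\calB(X_2\cap Z)\}$ is a $\sigma$-field containing $\tau(X_2)$, hence contains $X_1$). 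Your observation that the displayed family is a base rather than a topology is a fair reading of the statement. One caveat on a side remark you make: the assertion that $\calB(X_2\cap Z)$ \emph{equals} the $\sigma$-field generated by the base $\mathscr{B}$ is not automatic in general topological spaces, since arbitrary unions of basic open sets need not reduce to countable ones; however, your argument never uses that equality --- it only needs the inclusion $\mathscr{B}\subseteq\tau(X_2\cap Z)\subseteq\calB(X_2\cap Z)$, which is immediate --- so the proof stands. You are also right that, unlike the Kuratowski-type Theorem~\ref{thmB.1.1}, no injectivity or Polish-space hypotheses are required here.
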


\subsection{Proof of Lemma~\ref{lemma5.7}}
\label{s:AppB.2}

In this subsection we recall Lemma~\ref{lemma5.7} and prove it using the results from previous subsection.

\begin{lemma}
\label{lemmaB.2.1}[Also, Lemma~\ref{lemma5.7}]
Let $T > 0$ and $\mathcal{Z}_T$ be as defined in \eqref{def:ZT}. Then, the following sets $C([0,T];\rH) \cap \mathcal{Z}_T$, $L^2(0,T; \rV) \cap \mathcal{Z}_T$ are Borel subsets of $\mathcal{Z}_T$.
\end{lemma}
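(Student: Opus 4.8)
The plan is to deduce the statement from the Kuratowski Theorem (Theorem~\ref{thmB.1.1}) together with Lemma~\ref{lemmaB.1.2}, which is exactly the mechanism set up for this purpose in Appendix~\ref{s:AppB}. The key point is that both $C([0,T];\rH)$ and $L^2(0,T;\rV)$ are Polish spaces (separable complete metric spaces) which embed continuously and injectively into the larger Polish spaces appearing in the definition of $\mathcal{Z}_T$ in \eqref{def:ZT}. Once injectivity and Borel-measurability of the relevant inclusion maps are verified, Theorem~\ref{thmB.1.1} shows that the image of a Borel set is Borel, and Lemma~\ref{lemmaB.1.2} transfers this to the intersection with the remaining factors of $\mathcal{Z}_T$.

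First I would treat $C([0,T];\rH)$. Observe that $C([0,T];\rH)$ is a Polish space (with the sup norm), and so is $C([0,T];\rU^\ast)$. The natural inclusion $\iota\colon C([0,T];\rH)\to C([0,T];\rU^\ast)$ is continuous (since $\rH\hookrightarrow\rU^\ast$ continuously) and injective, hence a Borel-measurable injection of one Polish space into another. Taking $E_1=C([0,T];\rH)\in\calB(C([0,T];\rH))$ in Theorem~\ref{thmB.1.1} gives that $C([0,T];\rH)$, viewed as a subset of $C([0,T];\rU^\ast)$, is a Borel subset of $C([0,T];\rU^\ast)$. Now I apply Lemma~\ref{lemmaB.1.2} with $X_1=C([0,T];\rH)$, $X_2=C([0,T];\rU^\ast)$, and $Z$ equal to the intersection of the remaining three factors, namely $Z=L^2_{\mathrm w}(0,T;\rV)\cap L^2(0,T;\rH)\cap C([0,T];\rH_w)$, so that $X_2\cap Z=\mathcal{Z}_T$. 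The lemma then yields that $C([0,T];\rH)\cap\mathcal{Z}_T$ is a Borel subset of $\mathcal{Z}_T$, which is the first claim.

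The second claim is entirely analogous. Here $L^2(0,T;\rV)$ is a separable Hilbert space, hence Polish, and $L^2(0,T;\rH)$ is likewise Polish; the inclusion $L^2(0,T;\rV)\to L^2(0,T;\rH)$ induced by $\rV\hookrightarrow\rH$ is a continuous injection, so Theorem~\ref{thmB.1.1} shows $L^2(0,T;\rV)$ is a Borel subset of $L^2(0,T;\rH)$. Applying Lemma~\ref{lemmaB.1.2} with $X_1=L^2(0,T;\rV)$, $X_2=L^2(0,T;\rH)$ and $Z=C([0,T];\rU^\ast)\cap L^2_{\mathrm w}(0,T;\rV)\cap C([0,T];\rH_w)$ (so again $X_2\cap Z=\mathcal{Z}_T$) gives that $L^2(0,T;\rV)\cap\mathcal{Z}_T$ is a Borel subset of $\mathcal{Z}_T$.

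The main obstacle I anticipate is not any single computation but the bookkeeping needed to justify applying Lemma~\ref{lemmaB.1.2}: one must check that the topology $\mathcal{T}$ on $\mathcal{Z}_T$ (the supremum topology) really does coincide with the product/intersection topology described in \eqref{eq:B.1.1}, i.e.\ that writing $\mathcal{Z}_T=X_2\cap Z$ with the prescribed topology is legitimate for the particular factorizations chosen. One should also confirm that the weak-topology factors $L^2_{\mathrm w}(0,T;\rV)$ and $C([0,T];\rH_w)$ cause no difficulty here, since they enter only through the ambient factor $Z$ and Lemma~\ref{lemmaB.1.2} makes no metrizability or separability demand on $Z$. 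The reason the result cannot be obtained from Kuratowski's theorem alone, as the remark preceding the lemma notes, is precisely that $\mathcal{Z}_T$ with its non-metrizable topology is not Polish, so the intersection step via Lemma~\ref{lemmaB.1.2} is indispensable.
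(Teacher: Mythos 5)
Your proposal is correct and follows essentially the same route as the paper: Kuratowski's theorem (Theorem~\ref{thmB.1.1}) applied to the continuous injective inclusions between Polish spaces, followed by Lemma~\ref{lemmaB.1.2} to pass to the intersection with the remaining factors. The only cosmetic difference is that for the first set the paper embeds $C([0,T];\rH)$ into $C([0,T];\rU^\ast)\cap L^2(0,T;\rH)$ rather than into $C([0,T];\rU^\ast)$ alone; both choices are legitimate and the argument is otherwise identical.
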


\begin{proof}
First of all $C([0,T]; \rH) \subset C([0,T]; \rU^\ast) \cap L^2(0,T; \rH)$. Secondly, $C([0,T]; \rH)$ and $C([0,T]; \rU^\ast) \cap L^2(0,T; \rH)$ are Polish spaces. And finally, since $\rH$ is continuously embedded in $\rU^\ast$, the map
\[i \colon C([0,T]; \rH) \to C([0,T]; \rU^\ast) \cap L^2(0,T; \rH),\]
is continuous and hence Borel. Thus, by application of the Kuratowski Theorem (see Theorem~\ref{thmB.1.1}) $C([0,T]; \rH)$ is a Borel subset of $C([0,T]; \rU^\ast) \cap L^2(0,T; \rH)$. Therefore, by Lemma~\ref{lemmaB.1.2}, $C([0,T]; \rH) \cap {\mathcal{Z}}_T$ is a Borel subset of $C([0,T]; \rU^\ast) \cap L^2(0,T; \rH) \cap {\mathcal{Z}}_T$ which is equal to ${\mathcal{Z}}_T$.

Similarly we can show that $L^2(0,T; \rV) \cap {\mathcal{Z}}_T$ is a Borel subset of ${\mathcal{Z}}_T$. $L^2(0,T; \rV) \hookrightarrow L^2(0,T; \rH)$ and both are Polish spaces thus by application of the Kuratowski Theorem, $L^2(0,T; \rV)$ is a Borel subset of $L^2(0,T; \rH)$. Finally, we can conclude the proof of lemma by Lemma~\ref{lemmaB.1.2}.
\end{proof}


\end{document}